\PassOptionsToPackage{frozencache}{minted}

\IfFileExists{./prepreamble-amspreprint.sty}{\RequirePackage[packages,theorems,changes]{prepreamble-amspreprint}}{}

\makeatletter
\providecommand*{\input@path}{}\edef\input@path{{./scoop-latex/}\input@path}{}
\makeatother
\PassOptionsToPackage{style = authoryear-comp, backend = biber, dashed = false}{biblatex}
\PassOptionsToPackage{}{changes}    

\documentclass[english]{amsart}

\RequirePackage{biblatex}
\RequirePackage{ltxcmds}
\IfFileExists{./preamble-amspreprint.sty}{\RequirePackage[packages,theorems,changes]{preamble-amspreprint}}{}

\usepackage{scoop-local}

\IfFileExists{./postpreamble-amspreprint.sty}{\RequirePackage[packages,theorems,changes]{postpreamble-amspreprint}}{}

\makeatletter
\@ifpackageloaded{changes}{
\definechangesauthor[name = {Nico Haaf}, color = {red!80!black}]{NH}
\definechangesauthor[name = {Roland Herzog}, color = {blue!80!black}]{RH}
}{}
\makeatother        

\makeatletter
\@ifpackageloaded{hyperref}{%
	\hypersetup{
		pdftitle = {Preconditioned Latent Variable Proximal Point Method for Optimal Control Problems with Control Constraints},
		pdfauthor = {Nico Haaf, Roland Herzog},
		pdfkeywords = {latent variable proximal point, proximal Galerkin, PDE-constrained optimization, Newton's method, minimal residual method, preconditioning},
		pdfcreator = {Created using the Scoop Template Engine version 1.6.0.}
	}
}{
	\pdfinfo{
		/Title (Preconditioned Latent Variable Proximal Point Method for Optimal Control Problems with Control Constraints)
		/Author (Nico Haaf, Roland Herzog)
		/Subject ()
		/Keywords (latent variable proximal point, proximal Galerkin, PDE-constrained optimization, Newton's method, minimal residual method, preconditioning)
		/Creator (Created using the Scoop Template Engine version 1.6.0.)
	}
}
\makeatother

\title[Preconditioned LVPP for Optimal Control Problems]{Preconditioned Latent Variable Proximal Point Method for Optimal Control Problems with Control Constraints}

\author[N. Haaf]{Nico Haaf\orcidlink{0009-0002-4207-5564}}
\address[N. Haaf]{Interdisciplinary Center for Scientific Computing, Heidelberg University, 69120 Heidelberg, Germany}
\email{\detokenize{nico.haaf@iwr.uni-heidelberg.de}}
\urladdr{https://scoop.iwr.uni-heidelberg.de}

\author[R. Herzog]{Roland Herzog\orcidlink{0000-0003-2164-6575}}
\address[R. Herzog]{Interdisciplinary Center for Scientific Computing, Heidelberg University, 69120 Heidelberg, Germany and Institute for Mathematics, Heidelberg University, 69120 Heidelberg, Germany}
\email{\detokenize{roland.herzog@iwr.uni-heidelberg.de}}
\urladdr{https://scoop.iwr.uni-heidelberg.de}

\date{\today}

\dedicatory{}

\begin{document}

\begin{abstract}
The latent variable proximal point (LVPP) method solves optimization problems in function spaces with pointwise constraints by a sequence of smooth subproblems whose iterates are strictly feasible.
We apply it to linear-quadratic elliptic optimal control problems.
Control and observation are distributed but may be restricted to subdomains.
We prove that the LVPP subproblems are well posed in appropriate function spaces, that the iteration converges for every sequence of proximity parameters with divergent sum, and that it delivers the Lagrange multiplier of the constraint as a by-product.
We interpret the Newton step for the LVPP subproblem as a linear-quadratic optimal control problem with a spatially varying control cost, and we show that Newton's method converges locally quadratically.
The Newton systems are solved by the minimal residual method with a block-diagonal preconditioner.
We prove that the spectrum of the Newton operator with respect to the inner product defined by the preconditioner is bounded independently of the proximity parameter, the iterate, the mesh and the observation and control domains.
The results are combined into a complete algorithm, and tested in numerical experiments.

\end{abstract}

\keywords{latent variable proximal point, proximal Galerkin, PDE-constrained optimization, Newton's method, minimal residual method, preconditioning}

\makeatletter
\ltx@ifpackageloaded{hyperref}{%
\subjclass[2010]{\href{https://mathscinet.ams.org/msc/msc2020.html?t=49M41}{49M41}, \href{https://mathscinet.ams.org/msc/msc2020.html?t=49M15}{49M15}, \href{https://mathscinet.ams.org/msc/msc2020.html?t=65F08}{65F08}}
}{%
\subjclass[2010]{49M41, 49M15, 65F08}
}
\makeatother

\maketitle

\section{Introduction}
\label{section:introduction}

Optimal control problems with pointwise constraints on the control are a classical model problem of PDE-constrained optimization (\cite{Troeltzsch:2010:1,HinzePinnauUlbrichUlbrich:2009:1}).
Their optimality systems contain a complementarity condition between the control and the Lagrange multiplier of the constraint, which is nonsmooth.
Several classes of methods have been developed to deal with this nonsmoothness in function space.
The primal-dual active set method (\cite{BergouniouxItoKunisch:1999:1,HintermuellerItoKunisch:2002:1}) is a semismooth Newton method (\cite{Ulbrich:2011:1}), which converges locally superlinearly and mesh-independently for control constraints (\cite{HintermuellerUlbrich:2004:1}).
Its global convergence, however, is guaranteed only under structural assumptions, and for two-sided constraints only sufficient conditions are known (\cite{KunischRoesch:2002:1}).
Moreau--Yosida regularization and interior point methods replace the complementarity condition by a smooth approximation, whose accuracy is controlled by a penalty or barrier parameter that has to be driven to its limit (\cite{HintermuellerKunisch:2006:1,WeiserGaenzlerSchiela:2008:1,UlbrichUlbrich:2009:1}).
Measured in norms that do not depend on this parameter, the associated Newton systems become increasingly ill-conditioned along the way, and parameter-dependent norms, that is, preconditioners, are needed to counter this (\cite[Sec.~2.5.4]{SchielaUlbrich:2014:1}, \cite{PearsonStollWathen:2014:1}).

The latent variable proximal point (LVPP) method of \cite{KeithSurowiec:2024:1} is a recent alternative for problems with pointwise bound constraints, and it has since been extended to a variety of variational problems with inequality constraints (\cite{DokkenFarrellKeithPapadopoulosSurowiec:2025:2}).
It is a Bregman proximal point method, typically with an entropy as the Bregman function, written in terms of a latent variable which lives in a linear space and parametrizes the interior of the feasible set.
Every iterate is strictly feasible, and each subproblem's optimality system is a smooth nonlinear equation for which Newton's method is available.
In contrast to penalty and barrier methods, the proximity parameters $\alpha_k$ need not tend to a limit: the method converges as soon as $\sum_k \alpha_k = \infty$, and large proximity parameters accelerate the convergence.
Moreover, the entropy remains finite for feasible points which touch the bound on sets of positive measure, so that active sets pose no difficulty for the method (\cite[Sec.~3.1]{KeithSurowiec:2024:1}).

So far, LVPP has been applied to obstacle and contact problems, fracture, quasi-variational inequalities, gradient and eigenvalue constraints and the Monge--Ampère equation, among others (\cite{KeithSurowiec:2024:1,DokkenFarrellKeithPapadopoulosSurowiec:2025:2}).
To the best of our knowledge, it has not been applied to or analyzed for optimal control problems with PDE constraints.
The closest related work concerns density-based topology optimization (\cite[Sec.~6]{KeithSurowiec:2024:1}, \cite{KeithKimLazarovSurowiec:2025:1}), which is also constrained by a PDE.
There, the latent variable is updated by mirror descent steps, whereas we solve every LVPP subproblem by Newton's method.

In this paper, we apply LVPP to a linear-quadratic elliptic optimal control problem with pointwise control constraints and with distributed observation and control on subdomains $\Omegaobs$ and $\Omegactrl$, respectively, of $\Omega$.
We elaborate on the case of one-sided constraints and subsequently extend the results to two-sided constraints.
Our focus is twofold.
First, we analyze the LVPP iteration and Newton's method for its subproblems in function space.
Second, we develop a preconditioner for the Newton systems whose performance is robust with respect to the proximity parameter~$\alpha$, the iterate and the discretization.
We do not attempt a comparison with the primal-dual active set method or other established solvers, since our implementation of LVPP is not yet tuned for performance.

\subsection{Contributions}
\label{subsection:contributions}

The main contributions of this paper are the following.
\begin{enumeratearabic}
	\item
		We establish a rigorous function space setting for the LVPP iteration applied to the model problem \eqref{eq:problem}.
		For most applications of LVPP, such a setting is not fully known.
		The well-posedness of the LVPP subproblems has so far been proven only for the obstacle problem (\cite[App.~A]{KeithSurowiec:2024:1}), and the identification of the correct space for the latent variable~$\latentvariable$, typically a non-reflexive Banach space, is regarded as the main difficulty (\cite[Sec.~2]{DokkenFarrellKeithPapadopoulosSurowiec:2025:2}, \cite[Sec.~2.3]{KeithMasriZeinhofer:2025:1}).
		We prove that the LVPP subproblems for \eqref{eq:problem} are uniquely solvable with latent variables in the Hilbert space $L^2(\Omegactrl)$ (\cref{proposition:subproblem-wellposedness}).
		Moreover, the latent variables belong to $L^\infty(\Omegactrl)$ when the initial guess does, with explicit pointwise bounds (\cref{proposition:latent-linfty-bounds}).
		This is the setting in which the exponential is Fréchet differentiable, and in which we analyze Newton's method.

	\item
		We interpret the Newton step for the LVPP subproblem as a linear-quadratic optimal control problem with a spatially varying control cost (\cref{proposition:newton-qp}).
		The Newton systems are uniquely solvable for every iterate (\cref{lemma:newton-solvability}).
		The interpretation establishes Newton's method for the LVPP subproblem as a smooth counterpart of the primal-dual active set method.
		We prove that Newton's method, started at the previous LVPP iterate, converges locally quadratically, with the control and the latent variable in $L^\infty(\Omegactrl)$ (\cref{theorem:newton-local-convergence}).

	\item
		We propose a block-diagonal preconditioner for the minimal residual method (\minres, \cite{PaigeSaunders:1975:1}) applied to the Newton systems.
		The (inverse of the) preconditioner defines the inner product in which \minres orthonormalizes the Krylov subspace basis and measures the residual.
		With respect to this inner product, we bound the spectrum of the Newton operator by constants which depend neither on the proximity parameter~$\alpha_k$, nor on the iterate, nor on the mesh, nor on the observation and control domains (\cref{proposition:preconditioner-constants,corollary:newton-spectrum,corollary:minres-convergence}).
		The constants do, however, depend on the control cost parameter~$\beta$.
\end{enumeratearabic}
Numerical results illustrate the theoretical findings.

\subsection{The Latent Variable Proximal Point Method}
\label{subsection:lvpp-method}

The latent variable proximal point (LVPP) method (\cite{KeithSurowiec:2024:1,DokkenFarrellKeithPapadopoulosSurowiec:2025:2}) addresses optimization problems with pointwise constraints,
\begin{equation}
	\label{eq:abstract-problem}
	\begin{aligned}
		\text{Minimize}
		\quad
		&
		J(u)
		\\
		\text{subject to}
		\quad
		&
		u \in \abstractadmissibleset
		\coloneqq
		\setDef[auto]{v \in \abstractspace}{(Bv)(x) \in C(x) \text{ for } \hausdorff\text{-\aa\ } x \in \Omega_d}
		,
	\end{aligned}
\end{equation}
where
\begin{enumeratelatin}
	\item
		$\Omega \subseteq \R^d$ is a bounded domain and $\Omega_d \subseteq \overline{\Omega}$ is a set of Hausdorff dimension $d$ or $d - 1$, equipped with the Hausdorff measure $\hausdorff$ of the respective dimension, which is the Lebesgue measure in the first case,

	\item
		$\abstractspace \subseteq L^1(\Omega)$ is a Banach space,

	\item
		$B \colon \abstractspace \to L^1(\Omega_d)^m$ is a bounded linear operator,

	\item
		$C(x) \subseteq \R^m$ is a closed convex set with nonempty interior for every $x \in \Omega_d$,

	\item
		$J \colon \abstractspace \to \R$ is convex and Fréchet differentiable.
\end{enumeratelatin}
For every $x \in \Omega_d$, let $R(x, \cdot) \colon \R^m \to \interval(]{-\infty}{+\infty}$ be a Legendre function (\cite[Sec.~26]{Rockafellar:1970:1}) with $\interior \dom R(x, \cdot) = \interior C(x)$, and let $R$ be a normal convex integrand, which holds if $R$ is measurable in $x$ for every fixed second argument, lower semicontinuous in the second argument, and if $\dom R(x, \cdot)$ has nonempty interior (\cite[Sec.~2, Lemma~2]{Rockafellar:1968:1}).
Being Legendre means that $R(x, \cdot)$ is proper, lower semicontinuous, essentially smooth (differentiable on the interior of its domain, with $\abs{R'(x, t_j)} \to \infty$ whenever $t_j$ approaches a boundary point of $\dom R(x, \cdot)$), and essentially strictly convex.
Then $R'(x, \cdot) \colon \interior C(x) \to \interior \dom \conjugatemap{R}(x, \cdot)$ is a bijection with inverse $\conjugatederivative(x, \cdot)$, the derivative of the Fenchel conjugate (\cite{Rockafellar:1967:1}, see also \cite[Thm.~26.5]{Rockafellar:1970:1}).
The Bregman divergence of $R$, a non-symmetric measure of distance within $C(x)$, is
\begin{equation}
	\label{eq:bregman-divergence}
	D_R(a, b)
	\coloneqq
	R(a) - R(b) - R'(b) \, (a-b)
	,
	\quad
	a \in \dom R
	,
	\;
	b \in \interior \dom R
	,
\end{equation}
where the dependence on $x$ is suppressed.
Integration over $\Omega_d$ turns it into the divergence $\int_{\Omega_d} D_R(Bu, Bv) \, \d\hausdorff$ between $u \in \abstractadmissibleset$ and $v \in \abstractspace$ with $(Bv)(x) \in \interior C(x)$ for $\hausdorff$-\aa $x \in \Omega_d$.

The Bregman proximal point method for \eqref{eq:abstract-problem} starts from $u^0 \in \abstractspace$ with $(Bu^0)(x) \in \interior C(x)$ for $\hausdorff$-\aa $x \in \Omega_d$.
It computes, for a sequence of proximity parameters $\alpha_k > 0$,
\begin{equation}
	\label{eq:bregman-proximal-point-step}
	u^k
	\in
	\argmin_{u \in \abstractadmissibleset} \; J(u)
	+
	\frac{1}{\alpha_k} \int_{\Omega_d} D_R(Bu, Bu^{k-1}) \, \d\hausdorff
	.
\end{equation}
Its first-order optimality condition reads
\begin{equation}
	\label{eq:bregman-proximal-point-optimality}
	\alpha_k \, J'(u^k)
	+
	\dualmap{B} R'(Bu^k)
	-
	\dualmap{B} R'(Bu^{k-1})
	=
	0
	\quad
	\text{in }
	\dualspace{\abstractspace}
	.
\end{equation}

The LVPP method introduces the latent variable $\latentvariable^k \coloneqq R'(Bu^k)$. It uses $(R')^{-1} = \conjugatederivative$ to replace the relation $\latentvariable^k = R'(Bu^k)$, which is defined only where $(Bu^k)(x) \in \interior C(x)$, by the relation $Bu^k = \conjugatederivative(\latentvariable^k)$, which is defined for every $\latentvariable^k(x) \in \interior \dom \conjugatemap{R}(x, \cdot)$.

Let $\abstractlatentspace$ be a Banach space of $\hausdorff$-measurable functions $\Omega_d \to \R^m$ for which the pairing
\begin{equation}
	\label{eq:latent-pairing}
	\dual{\latentvariable}{Bv}
	\coloneqq
	\int_{\Omega_d} \latentvariable \cdot Bv \, \d\hausdorff
\end{equation}
is bounded on $\abstractlatentspace \times \abstractspace$, so that $\dualmap{B} \in \cL(\abstractlatentspace, \dualspace{\abstractspace})$ is defined by $\dual{\dualmap{B}\latentvariable}{v} = \dual{\latentvariable}{Bv}$.
Here and throughout, $\cL(X)$ and $\cL(X, Y)$ stand for bounded (continuous) linear endomorphisms or homomorphisms between the respective normed linear spaces.
Through this pairing, a function $\latentvariable \in \abstractlatentspace$ acts as a linear functional on the range of~$B$.
In a slight abuse of notation, we call $\dualmap{B}$ the dual of~$B$ with respect to the pairing \eqref{eq:latent-pairing}.
Given $\latentvariable^0 \in \abstractlatentspace$, the LVPP iteration determines $(u^k, \latentvariable^k) \in \abstractspace \times \abstractlatentspace$ as the solution of the nonlinear saddle-point system
\begin{subequations}
	\label{eq:lvpp-system}
	\begin{alignat}{2}
		\label{eq:lvpp-system-update}
		\alpha_k \, J'(u^k)
		+
		\dualmap{B}(\latentvariable^k
		-
		\latentvariable^{k-1})
		&
		=
		0
		&
		&
		\quad
		\text{in }
		\dualspace{\abstractspace}
		,
		\\
		\label{eq:lvpp-system-identification}
		Bu^k - \conjugatederivative(\latentvariable^k)
		&
		=
		0
		&
		&
		\quad
		\hausdorff\text{-\ale in }
		\Omega_d
		.
	\end{alignat}
\end{subequations}
The first row is the update $\dualmap{B}\latentvariable^k = \dualmap{B}\latentvariable^{k-1} - \alpha_k \, J'(u^k)$ of the latent variable in $\dualspace{\abstractspace}$.
The second row identifies the primal iterate pointwise with the latent one.
The iteration starts from an arbitrary $\latentvariable^0 \in \abstractlatentspace$ and, unlike \eqref{eq:bregman-proximal-point-step}, requires no initial guess $u^0$ with $(Bu^0)(x) \in \interior C(x)$.
Conversely, if $\latentvariable^0 = R'(Bu^0)$ for such a $u^0$, then substituting $\latentvariable^k = R'(Bu^k)$ turns \eqref{eq:lvpp-system} into \eqref{eq:bregman-proximal-point-optimality}, and both methods generate the same primal iterates.

For the control-constrained problems of this paper, $B$ is the identity on $\abstractspace = \abstractlatentspace = L^2(\Omegactrl)$, see \cref{section:lvpp-iteration}.
That is, we identify $L^2$ with its dual, and the pairing \eqref{eq:latent-pairing} is the $L^2(\Omegactrl)$ inner product with $\dualmap{B} \in \cL(\abstractlatentspace, \dualspace{\abstractspace})$ and $\dual{\dualmap{B}\latentvariable}{v} = \intOmegactrl \latentvariable \, v \, \d x$.

\subsection{Structure of the Paper}

This paper is structured as follows.
In \cref{section:lvpp-iteration}, we formulate the LVPP iteration for a model optimal control problem with one-sided control constraints.
We establish the well-posedness of the subproblems, the boundedness of the latent variables~$\latentvariable$, and the convergence of the iteration together with an a~posteriori error estimate.
In \cref{section:newton-method}, we study Newton's method for the LVPP subproblem, interpret the Newton step as a linear-quadratic optimal control problem, and prove local quadratic convergence.
In \cref{section:preconditioning}, we introduce the preconditioner to be used with the minimal residual method (\minres).
We bound the spectrum of the Newton operator with respect to this inner product, which yields convergence bounds for \minres.
In \cref{section:complete-algorithm}, we combine these components into a complete algorithm.
In \cref{section:two-sided-constraints}, we extend the results to two-sided control constraints.
Numerical experiments are reported in \cref{section:numerics}.

\section{The LVPP Iteration}
\label{section:lvpp-iteration}

This section applies the LVPP method to a standard control-constrained optimal control problem.
The problem setting is explained in \cref{subsection:problem-setting}.
\Cref{subsection:subproblem-wellposedness,subsection:latent-boundedness} demonstrate the well-posedness of the LVPP subproblems and the boundedness of their latent variables.
The optimality system of the LVPP subproblems is analyzed in \cref{subsection:optimality-forms}.
Convergence of the LVPP iteration with an a posteriori error estimate and a connection between the LVPP iterates and the Lagrange multiplier of the original problem is proved in \cref{subsection:lvpp-convergence}.

\subsection{Setting}
\label{subsection:problem-setting}

Suppose that $\Omega \subseteq \R^d$, $d \le 3$, is a bounded Lipschitz domain.
Let $\Omegaobs, \Omegactrl \subseteq \Omega$ be measurable sets of positive Lebesgue measure, the observation domain and the control domain.
Let $\statespace \coloneqq H_0^1(\Omega)$ be the state space and $\controlspace \coloneqq L^2(\Omegactrl)$ the control space, which is also the latent space.
The LVPP iteration is well posed with latent variables in $\controlspace$, and they lie in $L^\infty(\Omegactrl)$ whenever the initial~$\latentvariable^0$ does (\cref{subsection:subproblem-wellposedness,subsection:latent-boundedness}).
\Cref{section:newton-method} analyzes Newton's method in $L^\infty(\Omegactrl)$, where the exponential is differentiable.
The operators of the systems in this paper map into dual spaces, and the Riesz maps of $\statespace$ and $\controlspace$ are written as operators.
We use the operators $\Kstate, \Mstate, \Mobs \in \cL(\statespace, \dualspace{\statespace})$, $\Eobs \in \cL(L^2(\Omegaobs), \dualspace{\statespace})$, $\Mctrl \in \cL(\controlspace, \dualspace{\controlspace})$, $\Ec \in \cL(\controlspace, \dualspace{\statespace})$ and $\restrictc \in \cL(\statespace, \controlspace)$, defined by
\begin{equation}
	\label{eq:operators}
	\begin{aligned}
		\dual{\Kstate \state}{\statetest}
		&
		\coloneqq
		\int_\Omega \nabla \state \cdot \nabla \statetest \, \d x
		,
		&
		\dual{\Mstate \state}{\statetest}
		&
		\coloneqq
		\int_\Omega \state \, \statetest \, \d x
		,
		\\
		\dual{\Mobs \, \state}{\statetest}
		&
		\coloneqq
		\intOmegaobs \state \, \statetest \, \d x
		,
		&
		\dual{\Eobs \, g}{\statetest}
		&
		\coloneqq
		\intOmegaobs g \, \statetest \, \d x
		,
		\\
		\dual{\Mctrl \, \control}{\controltest}
		&
		\coloneqq
		\intOmegactrl \control \, \controltest \, \d x
		,
		&
		\dual{\Ec \, \control}{\statetest}
		&
		\coloneqq
		\intOmegactrl \control \, \statetest \, \d x
		,
		\\
		\restrictc \, \state
		&
		\coloneqq
		\restr{\state}{\Omegactrl}
		.
	\end{aligned}
\end{equation}
$\Kstate$ and $\Mctrl$ are the Riesz maps of $\statespace$ and $\controlspace$, so that $\norm{\state}_{\statespace}^2 = \dual{\Kstate \state}{\state} = \norm{\nabla \state}_{L^2(\Omega)}^2$ and $\norm{\control}_{\controlspace}^2 = \dual{\Mctrl \, \control}{\control}$ are norms on $Y$ and $U$, respectively, thanks to the Poincaré inequality $\norm{\state}_{L^2(\Omega)} \le \poincare \, \norm{\state}_{\statespace}$.
$\Ec$ and $\Eobs$ are extensions by zero into $\dualspace{\statespace}$, $\restrictc$ is the restriction to the control domain, and $\dualmap{\Ec} = \Mctrl \, \restrictc$.

\begin{remark}[{Preview on discretization}]
	\label{remark:discretization}
	We anticipate that the discretization of this paper will use continuous piecewise linear finite element functions for the state and the adjoint state, and piecewise constant functions for the control, the latent variable and the control bound $u_a$, on meshes which resolve $\Omegactrl$ and $\Omegaobs$.
	The discretization replaces every operator by its matrix under the same letter, and $\restrictc$ by the $L^2(\Omegactrl)$-projection onto the piecewise constants, for which $\dualmap{\Ec} = \Mctrl \, \restrictc$ holds again.
	With these replacements, no formula changes.
	The pointwise arguments on $\Omegactrl$ in \cref{subsection:preconditioner-constants} are the one place where the mesh independence of the bounds for the preconditioner depends on this choice, see \cref{remark:discretization:preconditioner}.
\end{remark}

The state $\state \in \statespace$ associated with a control $\control \in \controlspace$ solves
\begin{equation}
	\label{eq:state-equation-strong}
	-\Delta \state
	=
	\control
	\quad
	\text{in } \Omegactrl
	,
	\quad
	\quad
	-\Delta \state
	=
	0
	\quad
	\text{in } \Omega \setminus \Omegactrl
	,
	\quad
	\quad
	\state
	=
	0
	\quad
	\text{on } \partial \Omega
\end{equation}
in the weak sense, that is,
\begin{equation}
	\label{eq:state-equation}
	\Kstate \state
	=
	\Ec \, \control
	\quad
	\text{in }
	\dualspace{\statespace}
	.
\end{equation}
The state operator is thus the Riesz map $\Kstate$ of $\statespace$, but this is not essential (\cref{remark:state-operator}).
The control-to-state map is $S \coloneqq \Kstate^{-1} \Ec \in \cL(\controlspace, \statespace)$, and $\norm{S}$ denotes its norm, so that $\norm{S \control}_{L^2(\Omega)} \le \poincare \, \norm{S} \, \norm{\control}_{\controlspace}$.
Since $d \le 3$, the Laplacian is $L^\infty$-regular: if $\state \in \statespace$ satisfies $\dual{\Kstate \state}{\statetest} = \int_\Omega f \, \statetest \, \d x$ for all $\statetest \in \statespace$ with some $f \in L^2(\Omega)$, then $\state \in L^\infty(\Omega)$ and $\norm{\state}_{L^\infty(\Omega)} \le \regularityconstant \, \norm{f}_{L^2(\Omega)}$ with a constant $\regularityconstant$ depending only on $\Omega$, by Stampacchia's $L^\infty$ estimate (\cite[Chap.~II, Thm.~B.2 and the remark after Thm.~B.4]{KinderlehrerStampacchia:2000:1}); see also \cite[Thm.~8.16]{GilbargTrudinger:2001:1}.
In particular, $S$ maps $\controlspace$ into $L^\infty(\Omega)$ with norm at most $\regularityconstant$.
\Cref{subsection:latent-boundedness} uses this regularity to bound the latent variable~$\latentvariable$.
Given $y_d \in L^2(\Omegaobs)$, $u_a \in L^\infty(\Omegactrl)$ and $\beta > 0$, we consider the following optimal control problem with one-sided control constraints:
\begin{subequations}
	\label{eq:problem}
	\begin{equation}
		\label{eq:problem:objective}
		\text{Minimize}
		\quad
		J(\control)
		\coloneqq
		\frac{1}{2} \intOmegaobs \paren[normal](){S \control - y_d}^2 \, \d x
		+
		\frac{\beta}{2} \, \norm{\control}_{\controlspace}^2
		\quad
		\text{\wrt\ }
		\control \in \admissibleset
	\end{equation}
	with admissible set
	\begin{equation}
		\label{eq:admissible-set}
		\admissibleset
		\coloneqq
		\setDef{\control \in \controlspace}{\control \ge u_a \text{ \ale in } \Omegactrl}
		.
	\end{equation}
\end{subequations}
Two-sided control constraints $u_a \le \control \le u_b$ are the subject of \cref{section:two-sided-constraints}.
The objective $J$ is $\beta$-strongly convex and Fréchet differentiable on $\controlspace$ with derivative
\begin{align}
	J'(\control)
	&
	=
	\beta \Mctrl \, \control + \dualmap{S} \paren[big](){\Mobs \, S \control - \Eobs \, y_d}
	\notag
	\\
	&
	=
	\beta \Mctrl \, \control - \dualmap{\Ec} \adjointstate
	\notag
	\\
	\label{eq:objective-derivative}
	&
	=
	\Mctrl \paren[normal](){\beta \control - \restrictc \, \adjointstate}
	\in
	\dualspace{\controlspace}
	,
\end{align}
where the adjoint state $\adjointstate = \adjointstate(\control) \in \statespace$ associated with $\control$ solves $\Kstate \adjointstate = \Eobs \, y_d - \Mobs \, S \control$ in $\dualspace{\statespace}$.
The function $\nabla J(\control) = \beta \control - \restrictc \, \adjointstate \in \controlspace$ represents the functional $J'(\control)$, and we write
\begin{equation}
	\label{eq:multiplier-map}
	\lambda(\control)
	\coloneqq
	\beta \control - \restrictc \, \adjointstate
	,
	\quad
	\text{so that}
	\quad
	J'(\control)
	=
	\Mctrl \, \lambda(\control)
	.
\end{equation}
The map $\control \mapsto \lambda(\control)$ is affine with the derivative $\lambda' = \beta \id + \restrictc \Kstate^{-1} \Mobs \, S \in \cL(\controlspace)$, and $J'' = \Mctrl \, \lambda' \in \cL(\controlspace, \dualspace{\controlspace})$.
The derivative $J'$ is Lipschitz continuous from $\controlspace$ to $\dualspace{\controlspace}$ with constant $\beta + \poincare^2 \, \norm{S}^2$.
We recall in \cref{lemma:kkt} that $\lambda(\control^*)$ is the Lagrange multiplier of the constraint, and \cref{subsection:lvpp-convergence} shows that the LVPP iteration produces $\lambda(\control^k)$ as a by-product.

\begin{remark}[The state operator]
	\label{remark:state-operator}
	The state operator \eqref{eq:state-equation} of our model problem \eqref{eq:problem} is the Riesz map $\Kstate$ of $\statespace$, the Laplacian with homogeneous Dirichlet boundary conditions.
	Any symmetric (self-dual) operator $A \in \cL(\statespace, \dualspace{\statespace})$ with $a_0 \, \norm{\state}_{\statespace}^2 \le \dual{A \state}{\state}$ and $\norm{A \state}_{\dualspace{\statespace}} \le a_1 \, \norm{\state}_{\statespace}$ for all $\state \in \statespace$, $a_0, a_1 > 0$, and with the $L^\infty$-regularity can take its place, for instance $A \state \coloneqq -\operatorname{div}(a \nabla \state) + c \, \state$ with symmetric, uniformly positive definite $a \in L^\infty(\Omega)^{d \times d}$ and $c \in L^\infty(\Omega)$, $c \ge 0$.
	The control-to-state map is then $S = A^{-1} \Ec$.
	Nonsymmetric operators are outside the scope of this paper.
\end{remark}

The optimality system of \eqref{eq:problem} is standard.
We state it in the form that the LVPP iterates approximate.

\begin{lemma}[Optimality system]
	\label{lemma:kkt}
	Problem \eqref{eq:problem} has a unique solution $\control^* \in \admissibleset$.
	There exists a unique $\lambda^* \in \controlspace$ such that
	\begin{equation}
		\label{eq:kkt}
		J'(\control^*)
		=
		\Mctrl \, \lambda^*
		\quad
		\text{in } \dualspace{\controlspace}
		,
		\quad
		\quad
		\lambda^* \ge 0
		,
		\quad
		\quad
		\lambda^* \, (\control^* - u_a)
		=
		0
		\quad
		\text{\ale in } \Omegactrl
		,
	\end{equation}
	namely $\lambda^* = \lambda(\control^*)$, see \cite[Thms.~2.16 and~2.29]{Troeltzsch:2010:1}.
\end{lemma}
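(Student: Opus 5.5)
Existence and uniqueness of $\control^*$ follow from the direct method. The set $\admissibleset$ is nonempty, since $u_a \in L^\infty(\Omegactrl) \subseteq \controlspace$ lies in it. It is convex and closed in $\controlspace$, because $L^2$-convergent sequences have \ale convergent subsequences and the constraint $\control \ge u_a$ passes to the limit. The objective $J$ is continuous and $\beta$-strongly convex on $\controlspace$, hence coercive and weakly lower semicontinuous. A minimizing sequence is therefore bounded, and a weakly convergent subsequence has its limit in $\admissibleset$, since closed convex sets are weakly closed. This limit is a minimizer, and strong convexity gives uniqueness.

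For the optimality system, I would use the variational inequality $\dual{J'(\control^*)}{v - \control^*} \ge 0$ for all $v \in \admissibleset$, which holds because $J$ is convex and Fréchet differentiable and $\admissibleset$ is convex. By \eqref{eq:multiplier-map} this reads
\begin{equation*}
	\intOmegactrl \lambda(\control^*) \, (v - \control^*) \, \d x \ge 0
	\quad
	\text{for all } v \in \admissibleset .
\end{equation*}
Set $\lambda^* \coloneqq \lambda(\control^*) \in \controlspace$. Then $J'(\control^*) = \Mctrl \, \lambda^*$ holds by definition.

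The two pointwise conditions come from localized test functions. Choosing $v = \control^* + \chi_E$ for measurable $E \subseteq \Omegactrl$ gives $\int_E \lambda^* \, \d x \ge 0$ for every such $E$, hence $\lambda^* \ge 0$ \ale. Next take $v = u_a$ on $E$ and $v = \control^*$ elsewhere; this yields $\int_E \lambda^*(u_a - \control^*) \, \d x \ge 0$. The integrand $\lambda^*(u_a - \control^*)$ is nonpositive \ale because $\lambda^* \ge 0$ and $\control^* \ge u_a$. Since $E$ is arbitrary, the integrand vanishes \ale, which is the complementarity condition $\lambda^*(\control^* - u_a) = 0$.

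Uniqueness of $\lambda^*$ is immediate: $\Mctrl$ is the Riesz isomorphism of $\controlspace$, so $J'(\control^*) = \Mctrl \, \lambda^*$ determines $\lambda^* = \Mctrl^{-1} J'(\control^*) = \lambda(\control^*)$.

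The only delicate point is ensuring that the perturbed controls stay in $\controlspace$ and in $\admissibleset$. Both $\chi_E$ and $u_a \in L^\infty(\Omegactrl)$ are square integrable on $\Omegactrl$, and both perturbations respect the bound $v \ge u_a$. With that checked, every step above goes through, and the statement is fully covered by the cited results of Tröltzsch.
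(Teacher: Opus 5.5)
Your proof is correct and takes essentially the standard route that the paper delegates to \cite[Thms.~2.16 and~2.29]{Troeltzsch:2010:1}. Existence and uniqueness come from the direct method, where the $\beta$-strong convexity of $J$ supplies the coercivity needed because the one-sided admissible set is unbounded; the pointwise sign and complementarity conditions come from localizing the variational inequality, and $\lambda^* = \lambda(\control^*)$ is forced by the injectivity of $\Mctrl$.
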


\subsection{Well-Posedness of the LVPP Subproblem}
\label{subsection:subproblem-wellposedness}

This subsection sets up the LVPP subproblems for \eqref{eq:problem} and proves that each has a unique solution whose latent variable lies in $\latentspace$.
Problem \eqref{eq:problem} is of the form \eqref{eq:abstract-problem} with
\begin{enumeratelatin}
	\item
		$\abstractspace = L^2(\Omegactrl)$, regarded as a subspace of $L^1(\Omega)$ by extension by zero, $B$ the identity on $\controlspace$, $\Omega_d = \Omegactrl$ with the Lebesgue measure as $\hausdorff$, and $m = 1$,

	\item
		$C(x) = \interval[][){u_a(x)}{\infty}$, so that the admissible set of \eqref{eq:abstract-problem} is the set \eqref{eq:admissible-set}.
\end{enumeratelatin}
Recall that the reduced objective $J$ from \eqref{eq:problem:objective} is convex and Fréchet differentiable on~$\controlspace$.
Among the Legendre functions with $\interior \dom R(x, \cdot) = \interior C(x)$, we choose the generalized Shannon entropy
\begin{equation}
	\label{eq:legendre}
	R(x, t)
	\coloneqq
	\begin{cases}
		\paren[normal](){t - u_a(x)} \ln \paren[normal](){t - u_a(x)} - \paren[normal](){t - u_a(x)}
		,
		&
		t \ge u_a(x)
		,
		\\
		+\infty
		,
		&
		t < u_a(x)
		,
	\end{cases}
\end{equation}
with $0 \ln 0 \coloneqq 0$, for which $\dom R(x, \cdot) = C(x)$.
The function $R$ is a normal convex integrand since $u_a$ is measurable.
Moreover, $R'(x, t) = \ln \paren[normal](){t - u_a(x)}$ for $t > u_a(x)$ holds, and the Fenchel conjugate satisfies
\begin{equation}
	\label{eq:conjugate}
	\conjugatemap{R}(x, s)
	=
	u_a(x) \, s + \exp(s)
	,
	\quad
	\quad
	\conjugatederivative(x, s)
	=
	u_a(x) + \exp(s)
	,
	\quad
	s \in \R
	.
\end{equation}
The latent space $\latentspace$ meets the requirement of \cref{subsection:lvpp-method}, since the pairing \eqref{eq:latent-pairing} is the inner product of $\controlspace$.
The dual of the identity $B$ with respect to this pairing is the Riesz map of $\controlspace$, that is, $\dualmap{B} = \Mctrl$.
Given $\latentvariable^{k-1} \in \latentspace$ and $\alpha_k > 0$, the LVPP subproblem is
\begin{equation}
	\label{eq:subproblem}
	\text{Minimize}
	\quad
	\alpha_k \, J(\control)
	+
	\intOmegactrl R\paren[big](){x, \control(x)} \, \d x
	-
	\inner{\latentvariable^{k-1}}{\control}_{\controlspace}
	\quad
	\text{\wrt\ }
	\control \in \controlspace
	.
\end{equation}
Here $\inner{\cdot}{\cdot}_{\controlspace}$ denotes the inner product of~$\controlspace = L^2(\Omegactrl)$.
Up to the factor $\alpha_k$ and an additive constant, the objective of \eqref{eq:subproblem} is that of the Bregman step \eqref{eq:bregman-proximal-point-step} with $R'(\control^{k-1})$ replaced by the latent variable $\latentvariable^{k-1}$.
The LVPP optimality system \eqref{eq:lvpp-system} reads
\begin{subequations}
	\label{eq:lvpp-system-control}
	\begin{alignat}{2}
		\label{eq:lvpp-system-control-update}
		\alpha_k \, J'(\control^k) + \Mctrl \paren[big](){\latentvariable^k - \latentvariable^{k-1}}
		&
		=
		0
		&
		&
		\quad
		\text{in }
		\dualspace{\controlspace}
		,
		\\
		\label{eq:lvpp-system-control-identification}
		\control^k - u_a - \exp(\latentvariable^k)
		&
		=
		0
		&
		&
		\quad
		\text{\ale in }
		\Omegactrl
		.
	\end{alignat}
\end{subequations}

\begin{proposition}[Well-posedness of the subproblem]
	\label{proposition:subproblem-wellposedness}
	Let $\latentvariable^{k-1} \in \latentspace$ and $\alpha_k > 0$.
	\begin{enumeratelatin}
		\item
			\label{item:subproblem-existence}
			The subproblem \eqref{eq:subproblem} has a unique solution $\control^k$, and $\control^k \in \admissibleset$.

		\item
			\label{item:subproblem-interior}
			Let $\latentvariable^k \coloneqq \latentvariable^{k-1} - \alpha_k \lambda(\control^k) \in \latentspace$, so that \eqref{eq:lvpp-system-control-update} holds.
			Then $\control^k > u_a$ \ale in $\Omegactrl$, and $\latentvariable^k = \ln(\control^k - u_a)$.

		\item
			\label{item:subproblem-system}
			$(\control^k, \latentvariable^k)$ is the unique solution of \eqref{eq:lvpp-system-control} in $\controlspace \times \latentspace$.
	\end{enumeratelatin}
	Consequently, the trajectory of the latent variable lies in $\latentspace$ for every $\latentvariable^0 \in \latentspace$.
\end{proposition}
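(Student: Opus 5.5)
For \ref{item:subproblem-existence}, I would apply the direct method in $\controlspace$. Write the objective of \eqref{eq:subproblem} as $F(\control) = \alpha_k J(\control) + \mathcal R(\control) - \inner{\latentvariable^{k-1}}{\control}_{\controlspace}$ with $\mathcal R(\control) = \intOmegactrl R(x,\control(x))\,\d x$. The first step is to check that $\mathcal R$ is proper, convex and lower semicontinuous on $\controlspace$. It is proper since $\control = u_a + 1 \in \controlspace$ gives a finite value; recall $\Omegactrl$ is bounded and $u_a \in L^\infty$. Since $t\ln t - t \ge -1$, we have $\mathcal R \ge -\abs{\Omegactrl}$, so $\mathcal R$ is bounded below. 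Lower semicontinuity follows from Fatou's lemma along a.e.\ convergent subsequences, using the integrand bound $R \ge -1$ and the fact that $R(x,\cdot)$ is lower semicontinuous. With convexity, this gives weak lower semicontinuity.

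Since $J$ is $\beta$-strongly convex and continuous, $F$ is strictly convex and coercive. Indeed, $F(\control) \ge \tfrac{\alpha_k\beta}{2}\norm{\control}^2 - \norm{\latentvariable^{k-1}}\norm{\control} - \abs{\Omegactrl} - C$. A minimizing sequence is therefore bounded, a weak limit is a minimizer, and strict convexity gives uniqueness. Finiteness of $F(\control^k)$ forces $\control^k \ge u_a$ a.e., so $\control^k \in \admissibleset$.

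For \ref{item:subproblem-interior}, which I expect to be the main obstacle because $\mathcal R$ is not differentiable on $\controlspace$, I would use one-sided directional derivatives. Fix $\control^k$ and a direction $v = u_a + \exp(\latentvariable^{k})\cdots$; more concretely, take $v \in \controlspace$ with $\control^k + t v \ge u_a$ for small $t > 0$, for instance $v = w - \control^k$ with $w \ge u_a + \delta$. Optimality gives $0 \le \lim_{t\downarrow 0} \bigl(F(\control^k + t v) - F(\control^k)\bigr)/t$. The difference quotients of the convex integrand are monotone in $t$, so monotone convergence yields
\begin{equation*}
	\intOmegactrl \paren[big](){\alpha_k \lambda(\control^k) + \ln(\control^k - u_a) - \latentvariable^{k-1}} \, v \, \d x \ge 0,
\end{equation*}
with $\ln 0 = -\infty$. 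Taking $v = u_a + 1 - \control^k$, which is positive on the set $Z = \{\control^k = u_a\}$, the integrand equals $-\infty$ on $Z$. Hence $\abs{Z} = 0$, that is, $\control^k > u_a$ a.e.

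Next, since $\control^k > u_a$ a.e., I would take $v = \varphi\,(\control^k - u_a)\,\chi_{E_n}$ with $\varphi \in L^\infty$ and $E_n = \{\control^k - u_a \ge 1/n,\ \abs{\ln(\control^k - u_a)} \le n\}$. Both signs of $v$ are now admissible, so the expression $\alpha_k\lambda(\control^k) + \ln(\control^k - u_a) - \latentvariable^{k-1}$ vanishes a.e.\ on $E_n$. Since $\bigcup_n E_n$ has full measure, it vanishes a.e. This gives $\ln(\control^k - u_a) = \latentvariable^{k-1} - \alpha_k\lambda(\control^k) = \latentvariable^k \in \controlspace$, and hence \eqref{eq:lvpp-system-control-identification}.

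For \ref{item:subproblem-system}, existence follows from the above, since \eqref{eq:lvpp-system-control-update} holds by definition of $\latentvariable^k$ and \eqref{eq:objective-derivative}. For uniqueness, any solution $(\control,\latentvariable) \in \controlspace\times\latentspace$ satisfies $\control = u_a + \exp(\latentvariable) > u_a$. By convexity of $R(x,\cdot)$, $R(x,w) \ge R(x,\control) + \latentvariable\,(w - \control)$ pointwise, and integrating gives $\mathcal R(w) \ge \mathcal R(\control) + \inner{\latentvariable}{w - \control}$; note the integral $\mathcal R(\control)$ is finite because $\exp(\latentvariable)\,\latentvariable \ge -1/e$ and is bounded above by an integrable function where $\mathcal R(w)<\infty$, and otherwise the inequality is trivial. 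Combining this with convexity of $J$ and \eqref{eq:lvpp-system-control-update} shows $F(w) \ge F(\control)$ for all $w$. Thus $\control$ minimizes $F$, so $\control = \control^k$, and \eqref{eq:lvpp-system-control-update} then determines $\latentvariable$ uniquely. The final claim about the trajectory follows by induction on $k$.
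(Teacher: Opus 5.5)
Your proof is correct, but it takes a more hands-on route than the paper. The paper does no variational computation itself. It cites the characterization of the subdifferential of the integral functional $\mathcal{R}$ on $L^2(\Omegactrl)$ (Barbu--Precupanu, Prop.~2.53), which supplies properness, convexity and lower semicontinuity for (a). It combines this with the sum rule, so that minimality of $\control^k$ becomes \emph{equivalent} to $\latentvariable^k(x) \in \partial R(x, \control^k(x))$ for a.e.\ $x$. Part (b) then follows at once from $\partial R(x, u_a(x)) = \emptyset$, and part (c) is the same equivalence read in the other direction.

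You proceed differently at each step. You obtain lower semicontinuity from Fatou's lemma. You replace the necessity half of that characterization by first-order variations: the one-sided variation towards $u_a + 1$, along which the entropy has directional derivative $-\infty$ on the contact set, rules out a contact set of positive measure, and the two-sided variations supported on $E_n$ give the pointwise identity. You replace the sufficiency half by integrating the pointwise subgradient inequality and adding the convexity inequality of $J$.

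The paper's version is shorter and treats (b) and (c) symmetrically, but it outsources the measure-theoretic work to a cited theorem. Yours is self-contained, needs only the standard convergence theorems, and shows exactly where the essential smoothness of $R$ enters.

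When you write it out, state the integrable upper bound that the monotone convergence step in (b) requires. For $t \in (0,1]$ the difference quotients are bounded by their value at $t = 1$, which is integrable because $R(x,s) \le (s - u_a(x))^2$ for $s \ge u_a(x)$ and $R \ge -1$. The same bound, applied with $\control - u_a = \exp(\latentvariable) \in L^2(\Omegactrl)$, gives the finiteness of $\mathcal{R}(\control)$ in (c) directly and makes your case distinction there unnecessary. Finally, delete the stray fragment ``$v = u_a + \exp(\latentvariable^k)\cdots$''.
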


\begin{proof}
	\ref{item:subproblem-existence}:
	The integral functional $\control \mapsto \intOmegactrl R(x, \control(x)) \, \d x$ is proper, convex, and lower semicontinuous on $\controlspace$, and $\Mctrl \, \controltest$ with $\controltest \in \controlspace$ is a subgradient of it at $\control$ if and only if $\controltest(x) \in \partial R(x, \control(x))$ for \aa $x \in \Omegactrl$ (\cite[Prop.~2.53]{BarbuPrecupanu:2012:1}).
	Its hypotheses hold since $R$ is a normal convex integrand with $R \ge -1$ and $R(x, u_a(x) + 1) = -1$.
	The objective of \eqref{eq:subproblem} is therefore proper, lower semicontinuous, and $\alpha_k \beta$-strongly convex, and it has a unique minimizer $\control^k$.
	It is $+\infty$ outside $\admissibleset$, so $\control^k \in \admissibleset$.

	\ref{item:subproblem-interior}:
	The objective of \eqref{eq:subproblem} is the sum of the integral functional and the Fréchet differentiable convex function $\alpha_k \, J - \dual{\Mctrl \, \latentvariable^{k-1}}{\cdot}$.
	By the sum rule, $\control^k$ minimizes it if and only if $\Mctrl \, \latentvariable^{k-1} - \alpha_k \, J'(\control^k) = \Mctrl \, \latentvariable^k$ is a subgradient of the integral functional at $\control^k$, that is, if and only if $\latentvariable^k(x) \in \partial R(x, \control^k(x))$ for \aa $x \in \Omegactrl$.
	Now $R(x, \cdot)$ has no subgradient at $u_a(x)$, since $R'(x, t) \to -\infty$ as $t \downarrow u_a(x)$, and $\partial R(x, t) = \set{\ln(t - u_a(x))}$ for $t > u_a(x)$.
	Hence $\control^k > u_a$ \ale, and $\latentvariable^k = \ln(\control^k - u_a)$.

	\ref{item:subproblem-system}:
	Relation \eqref{eq:lvpp-system-control-update} holds by definition of $\latentvariable^k$, and \eqref{eq:lvpp-system-control-identification} is \ref{item:subproblem-interior}.
	Conversely, let $(\control, \latentvariable) \in \controlspace \times \latentspace$ solve \eqref{eq:lvpp-system-control}.
	By \eqref{eq:lvpp-system-control-identification}, $\control > u_a$ \ale and $\latentvariable(x) = \ln(\control(x) - u_a(x)) \in \partial R(x, \control(x))$ \ale, so $\Mctrl \, \latentvariable$ is a subgradient of the integral functional at $\control$ by \ref{item:subproblem-existence}.
	By \eqref{eq:lvpp-system-control-update}, $\Mctrl \, \latentvariable = \Mctrl \, \latentvariable^{k-1} - \alpha_k \, J'(\control)$, so $\control$ minimizes the objective of \eqref{eq:subproblem} by the equivalence of \ref{item:subproblem-interior}.
	Hence $\control = \control^k$, and then $\latentvariable = \latentvariable^k$ by \eqref{eq:lvpp-system-control-update}.
\end{proof}

\subsection{Boundedness of the Latent Variables}
\label{subsection:latent-boundedness}

Newton's method of \cref{section:newton-method} differentiates the exponential, which requires that the latent variable belong to $L^\infty$.
This subsection shows that $\latentvariable^{k-1} \in L^\infty(\Omegactrl)$ indeed implies $\latentvariable^k \in L^\infty(\Omegactrl)$, through upper and lower bounds on the slack $\control^k - u_a$.

\begin{proposition}[$L^\infty$ bounds]
	\label{proposition:latent-linfty-bounds}
	Let $\latentvariable^{k-1} \in L^\infty(\Omegactrl)$ and $\alpha_k > 0$.
	Let $(\control^k, \latentvariable^k) \in \controlspace \times \latentspace$ be the solution of \eqref{eq:lvpp-system-control}, and let $\adjointstate^k \in \statespace$ be the adjoint state of $\control^k$ according to \eqref{eq:objective-derivative}.
	Then, with $g_k \coloneqq \latentvariable^{k-1} - \alpha_k \beta \, u_a + \alpha_k \, \restrictc \, \adjointstate^k \in L^\infty(\Omegactrl)$, and the increasing bijection $\Phi_k \colon \interval(){0}{\infty} \to \R$, $\Phi_k(s) \coloneqq \ln s + \alpha_k \beta \, s$, it holds that
	\begin{equation}
		\label{eq:slack-bounds}
		\Phi_k^{-1}\paren[big](){\essinf g_k}
		\le
		\control^k - u_a
		\le
		\Phi_k^{-1}\paren[big](){\esssup g_k}
		\quad
		\text{\ale in } \Omegactrl
		.
	\end{equation}
	In particular, $\control^k, \latentvariable^k \in L^\infty(\Omegactrl)$, and the trajectory of the latent variable consists of essentially bounded functions whenever $\latentvariable^0 \in L^\infty(\Omegactrl)$.
\end{proposition}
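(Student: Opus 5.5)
The plan is to rewrite the LVPP optimality system \eqref{eq:lvpp-system-control} as a pointwise scalar equation for the slack $s^k \coloneqq \control^k - u_a$, with $g_k$ as a frozen right-hand side. Since $\Mctrl$ is the Riesz map of $\controlspace$, \eqref{eq:lvpp-system-control-update} together with \eqref{eq:multiplier-map} and \eqref{eq:objective-derivative} gives the identity $\latentvariable^k = \latentvariable^{k-1} - \alpha_k\paren[normal](){\beta \control^k - \restrictc \, \adjointstate^k}$ in $\controlspace$, hence a.e.\ in $\Omegactrl$. By \cref{proposition:subproblem-wellposedness}, $s^k > 0$ a.e.\ and $\latentvariable^k = \ln s^k$. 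Writing $\control^k = u_a + s^k$ and collecting terms yields
\begin{equation*}
	\ln s^k + \alpha_k \beta \, s^k
	=
	\latentvariable^{k-1} - \alpha_k \beta \, u_a + \alpha_k \, \restrictc \, \adjointstate^k
	=
	g_k
	\quad
	\text{a.e.\ in } \Omegactrl
	,
\end{equation*}
that is, $\Phi_k(s^k) = g_k$ pointwise.

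Next I check that $g_k \in L^\infty(\Omegactrl)$. The terms $\latentvariable^{k-1}$ and $u_a$ are bounded by assumption. For the adjoint state, $\Kstate \adjointstate^k = \Eobs \, y_d - \Mobs \, S \control^k$ is of the form $\int_\Omega f \, \statetest \, \d x$ with $f = \chi_{\Omegaobs}\paren[normal](){y_d - S\control^k} \in L^2(\Omega)$, since $\control^k \in \controlspace$ and $S\control^k \in \statespace \subseteq L^2(\Omega)$. Stampacchia's $L^\infty$ estimate quoted in \cref{subsection:problem-setting} then gives $\adjointstate^k \in L^\infty(\Omega)$, and so $\restrictc \, \adjointstate^k \in L^\infty(\Omegactrl)$. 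This regularity step is the only nontrivial ingredient and relies on $d \le 3$. The rest is elementary.

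Finally, $\Phi_k$ is continuous and strictly increasing on $\interval(){0}{\infty}$, with limits $-\infty$ at $0$ and $+\infty$ at $\infty$, so it is a bijection onto $\R$ with increasing inverse. Applying $\Phi_k^{-1}$ to $\essinf g_k \le g_k \le \esssup g_k$ a.e.\ gives \eqref{eq:slack-bounds}. The lower bound is a positive constant, so $\latentvariable^k = \ln s^k$ is bounded below, and the upper bound bounds it above. Hence $\latentvariable^k \in L^\infty(\Omegactrl)$, and $\control^k = u_a + s^k \in L^\infty(\Omegactrl)$. Induction on $k$, starting from $\latentvariable^0 \in L^\infty(\Omegactrl)$, gives the statement about the whole trajectory.
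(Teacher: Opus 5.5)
Your proposal is correct and follows the paper's proof essentially step by step. You derive the pointwise identity $\Phi_k(\control^k - u_a) = g_k$ from the update and identification rows, bound $g_k$ in $L^\infty(\Omegactrl)$ via Stampacchia's estimate for the adjoint state, and then apply the increasing inverse $\Phi_k^{-1}$ and induct over $k$, exactly as the paper does.
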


\begin{proof}
	By \eqref{eq:lvpp-system-control} and \eqref{eq:multiplier-map}, $\latentvariable^k = \ln(\control^k - u_a)$ and $\latentvariable^k = \latentvariable^{k-1} - \alpha_k \beta \control^k + \alpha_k \, \restrictc \, \adjointstate^k$, that is, $\Phi_k(\control^k - u_a) = g_k$ \ale in $\Omegactrl$.
	Here $g_k \in L^\infty(\Omegactrl)$, since $\latentvariable^{k-1}$ and $u_a$ are essentially bounded and since the right-hand side of the adjoint equation $\Kstate \adjointstate^k = \Eobs \, y_d - \Mobs \, S \control^k$ is $\statetest \mapsto \intOmegaobs (y_d - S \control^k) \, \statetest \, \d x$, so that $\norm{\adjointstate^k}_{L^\infty(\Omega)} \le \regularityconstant \, \norm{y_d - S \control^k}_{L^2(\Omegaobs)}$ by the $L^\infty$-regularity of \cref{subsection:problem-setting}.
	The function $\Phi_k$ is continuous and strictly increasing with $\Phi_k(s) \to -\infty$ as $s \downarrow 0$ and $\Phi_k(s) \to \infty$ as $s \to \infty$, hence a bijection onto $\R$ with increasing inverse.
	Therefore $\control^k - u_a = \Phi_k^{-1}(g_k)$, and \eqref{eq:slack-bounds} follows from $\essinf g_k \le g_k \le \esssup g_k$.
	Since $\Phi_k^{-1}(\essinf g_k) > 0$, the bounds \eqref{eq:slack-bounds} show $\control^k \in L^\infty(\Omegactrl)$ and $\latentvariable^k = \ln(\control^k - u_a) \in L^\infty(\Omegactrl)$, and induction over $k$ gives the last claim.
\end{proof}

The bounds depend on $k$ through $\alpha_k$ and $g_k$.
If $\lambda^* \ne 0$ and $\sum_k \alpha_k = \infty$, no bound uniform in $k$ exists, since $\norm{\latentvariable^k}_{L^1(\Omegactrl)} \to \infty$ by \cref{theorem:lvpp-convergence}~\ref{item:lvpp-convergence-divergence}.

\subsection{Optimality System for LVPP Subproblems}
\label{subsection:optimality-forms}

In the unknowns $\control$ and $\latentvariable$, the LVPP system \eqref{eq:lvpp-system-control} contains $J'(\control) = \beta \Mctrl \, \control + \dualmap{S} \paren[big](){\Mobs \, S \control - \Eobs \, y_d}$ from \eqref{eq:objective-derivative}.
Hence every evaluation of the reduced Newton operator $J''(u) = \beta \Mctrl + \dualmap{S} \Mobs \, S$ applied to a direction costs two PDE solves, one through $S = \Kstate^{-1} \Ec$ and one through $\dualmap{S} = \dualmap{\Ec} \Kstate^{-1}$.
We therefore reintroduce the state $\state$ and the adjoint state $\adjointstate$ as unknowns into the optimality system.

By \eqref{eq:objective-derivative}, the first row of \eqref{eq:lvpp-system-control} with the adjoint state $\adjointstate$ of $\control$ reads $\beta \Mctrl \, \control - \dualmap{\Ec} \adjointstate + \frac{1}{\alpha_k} \Mctrl \, \latentvariable = \frac{1}{\alpha_k} \Mctrl \, \latentvariable^{k-1}$.
Together with the state equation $\Kstate \state = \Ec \, \control$, the adjoint equation $\Kstate \adjointstate = \Eobs \, y_d - \Mobs \, \state$, and the second row of \eqref{eq:lvpp-system-control} multiplied by $\Mctrl / \alpha_k$, this is the system
\begin{equation}
	\label{eq:optimality-form-scaled}
	\begin{bmatrix}
		\Mobs & 0 & \Kstate & 0
		\\
		0 & \beta \Mctrl & -\dualmap{\Ec} & \frac{1}{\alpha_k} \Mctrl
		\\
		\Kstate & -\Ec & 0 & 0
		\\
		0 & \frac{1}{\alpha_k} \Mctrl & 0 & -\frac{1}{\alpha_k} \Mctrl \conjugatederivative(\cdot)
	\end{bmatrix}
	\begin{bmatrix}
		\state \\ \control \\ \adjointstate \\ \latentvariable
	\end{bmatrix}
	=
	\begin{bmatrix}
		\Eobs \, y_d \\ \frac{1}{\alpha_k} \Mctrl \, \latentvariable^{k-1} \\ 0 \\ 0
	\end{bmatrix}
\end{equation}
in $\dualspace{\statespace} \times \dualspace{\controlspace} \times \dualspace{\statespace} \times \dualspace{\latentspace}$ for the unknowns $(\state, \control, \adjointstate, \latentvariable) \in \statespace \times \controlspace \times \statespace \times \latentspace$ with $\exp(\latentvariable) \in \controlspace$, which we call the scaled form of the LVPP optimality system in the full space.
The system is nonlinear only through the entry $-\frac{1}{\alpha_k} \Mctrl \conjugatederivative(\cdot)$ of the last row, which acts on $\latentvariable$.
The block structure will persist under linearizations, and the factor $\frac{1}{\alpha_k}$ in the last row makes the Newton operator symmetric.

Alternatively, we can consider the optimality system in the unknowns $(\state, \control, \adjointstate, \lambda)$ with $\lambda = (\latentvariable^{k-1} - \latentvariable) / \alpha_k$.
With the last row multiplied by $-\alpha_k$, the optimality system reads
\begin{equation}
	\label{eq:optimality-form-multiplier}
	\begin{bmatrix}
		\Mobs & 0 & \Kstate & 0
		\\
		0 & \beta \Mctrl & -\dualmap{\Ec} & -\Mctrl
		\\
		\Kstate & -\Ec & 0 & 0
		\\
		0 & -\Mctrl & 0 & \Mctrl \conjugatederivative(\latentvariable^{k-1} - \alpha_k \, \cdot)
	\end{bmatrix}
	\begin{bmatrix}
		\state \\ \control \\ \adjointstate \\ \lambda
	\end{bmatrix}
	=
	\begin{bmatrix}
		\Eobs \, y_d \\ 0 \\ 0 \\ 0
	\end{bmatrix}
	,
\end{equation}
which we call the multiplier form.
The multiplier form shows how the latent variable~$\latentvariable$ and the multiplier of \cref{lemma:kkt} are related.
For numerical purposes, we will solve the scaled form \eqref{eq:optimality-form-scaled}.

The following result relates the various forms of the optimality system for the LVPP subproblem \eqref{eq:subproblem}:
\begin{lemma}[Optimality systems of the subproblem]
	\label{lemma:optimality-forms}
	Let $\latentvariable^{k-1} \in \latentspace$ and $\alpha_k > 0$.
	\begin{enumeratelatin}
		\item
			\label{item:optimality-forms-scaled}
			$(\state, \control, \adjointstate, \latentvariable) \in \statespace \times \controlspace \times \statespace \times \latentspace$ with $\exp(\latentvariable) \in \controlspace$ solves the full-space optimality system \eqref{eq:optimality-form-scaled} if and only if $(\control, \latentvariable)$ solves the reduced optimality system \eqref{eq:lvpp-system-control}, $\state = S \control$, and $\adjointstate$ is the adjoint state of $\control$.
			Consequently, \eqref{eq:optimality-form-scaled} has the unique solution $(S \control^k, \control^k, \adjointstate^k, \latentvariable^k)$ with $(\control^k, \latentvariable^k)$ from \cref{proposition:subproblem-wellposedness} and the adjoint state $\adjointstate^k$ of $\control^k$.

		\item
			\label{item:optimality-forms-multiplier}
			$(\state, \control, \adjointstate, \lambda)$ solves the multiplier form \eqref{eq:optimality-form-multiplier} of the optimality system if and only if $(\state, \control, \adjointstate, \latentvariable^{k-1} - \alpha_k \lambda)$ solves \eqref{eq:optimality-form-scaled}.
			Consequently, the multiplier form \eqref{eq:optimality-form-multiplier} has the unique solution $(S \control^k, \control^k, \adjointstate^k, (\latentvariable^{k-1} - \latentvariable^k) / \alpha_k)$.
	\end{enumeratelatin}
\end{lemma}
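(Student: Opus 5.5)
The plan is to verify both equivalences row by row, and then to read off existence and uniqueness from \cref{proposition:subproblem-wellposedness}.

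For \ref{item:optimality-forms-scaled}, I read the four rows of \eqref{eq:optimality-form-scaled} one at a time.
The third row, $\Kstate \state = \Ec \, \control$, holds if and only if $\state = S \control$, since $\Kstate$ is invertible.
Given this, the first row, $\Kstate \adjointstate = \Eobs \, y_d - \Mobs \, S \control$, holds if and only if $\adjointstate$ is the adjoint state of $\control$.
With $\adjointstate$ the adjoint state, \eqref{eq:objective-derivative} gives $\beta \Mctrl \, \control - \dualmap{\Ec} \adjointstate = J'(\control)$.
Multiplying the second row by $\alpha_k$ then yields exactly \eqref{eq:lvpp-system-control-update}.
The fourth row reads $\frac{1}{\alpha_k} \Mctrl \paren[normal](){\control - u_a - \exp(\latentvariable)} = 0$ in $\dualspace{\latentspace}$, using \eqref{eq:conjugate}.
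This row is well defined because $\exp(\latentvariable) \in \controlspace$ and $u_a \in L^\infty(\Omegactrl)$.
Since $\Mctrl$ is the Riesz isomorphism of $\controlspace$, the row is equivalent to $\control - u_a - \exp(\latentvariable) = 0$ in $L^2$, that is, \aa in $\Omegactrl$, which is \eqref{eq:lvpp-system-control-identification}.
All steps are equivalences, so the "if and only if" follows.
For the consequence, \cref{proposition:subproblem-wellposedness}~\ref{item:subproblem-system} provides the unique solution $(\control^k, \latentvariable^k)$ of \eqref{eq:lvpp-system-control}, and $\state$ and $\adjointstate$ are determined uniquely by $\control$.
The solution also satisfies the side condition: $\exp(\latentvariable^k) = \control^k - u_a \in \controlspace$.

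For \ref{item:optimality-forms-multiplier}, I substitute $\latentvariable = \latentvariable^{k-1} - \alpha_k \lambda$, which defines a bijection $\lambda \leftrightarrow \latentvariable$ on $\latentspace$.
The first and third rows of the two forms coincide.
In the second row of \eqref{eq:optimality-form-scaled}, $\frac{1}{\alpha_k} \Mctrl \, \latentvariable - \frac{1}{\alpha_k} \Mctrl \, \latentvariable^{k-1} = -\Mctrl \, \lambda$, so moving the right-hand side to the left gives the second row of \eqref{eq:optimality-form-multiplier}.
The fourth row of \eqref{eq:optimality-form-scaled} multiplied by $-\alpha_k$ is $-\Mctrl \, \control + \Mctrl \conjugatederivative(\latentvariable^{k-1} - \alpha_k \lambda) = 0$, which is the last row of the multiplier form.
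Uniqueness then transfers through the bijection, giving $\lambda = (\latentvariable^{k-1} - \latentvariable^k)/\alpha_k$.

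I expect no real obstacle here.
The only point needing care is the function-space bookkeeping: the fourth row is an identity in $\dualspace{\latentspace}$, and it must be converted to a pointwise \aa identity via injectivity of $\Mctrl$.
In the multiplier form, the implicit requirement $\exp(\latentvariable^{k-1} - \alpha_k \lambda) \in \controlspace$ must be tracked in the same way as the side condition $\exp(\latentvariable) \in \controlspace$.
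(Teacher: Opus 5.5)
Your proposal is correct and follows essentially the same route as the paper. Both arguments identify the rows of \eqref{eq:optimality-form-scaled} one by one with the state equation, the adjoint equation, the update \eqref{eq:lvpp-system-control-update} and the identification \eqref{eq:lvpp-system-control-identification} (using injectivity of $\Mctrl$), take uniqueness from \cref{proposition:subproblem-wellposedness}, and obtain the multiplier form by the affine substitution $\latentvariable = \latentvariable^{k-1} - \alpha_k \lambda$. Your check that $\exp(\latentvariable^k) = \control^k - u_a \in \controlspace$ is the same observation the paper makes to confirm the side condition.
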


\begin{proof}
	\ref{item:optimality-forms-scaled}:
	Let $(\state, \control, \adjointstate, \latentvariable)$ with $\exp(\latentvariable) \in \controlspace$ solve \eqref{eq:optimality-form-scaled}.
	The third row is $\state = S \control$, and the first row then states that $\adjointstate$ is the adjoint state of $\control$.
	With this $\adjointstate$, the second row multiplied by $\alpha_k$ is \eqref{eq:lvpp-system-control-update} by \eqref{eq:objective-derivative}, and the fourth row multiplied by $\alpha_k$ is $\Mctrl \paren[big](){\control - u_a - \exp(\latentvariable)} = 0$ in $\dualspace{\latentspace}$, which is \eqref{eq:lvpp-system-control-identification} since $\Mctrl$ is injective.
	The same four identities show, conversely, that $(S \control, \control, \adjointstate, \latentvariable)$ solves \eqref{eq:optimality-form-scaled} whenever $(\control, \latentvariable)$ solves \eqref{eq:lvpp-system-control} and $\adjointstate$ is the adjoint state of $\control$, where $\exp(\latentvariable) = \control - u_a \in \controlspace$.
	By \cref{proposition:subproblem-wellposedness}, the solution of \eqref{eq:lvpp-system-control} is unique, and so is the solution $(S \control^k, \control^k, \adjointstate^k, \latentvariable^k)$ of \eqref{eq:optimality-form-scaled}.

	\ref{item:optimality-forms-multiplier}:
	The substitution $\latentvariable = \latentvariable^{k-1} - \alpha_k \lambda$ turns the second row of \eqref{eq:optimality-form-scaled} into the second row of \eqref{eq:optimality-form-multiplier}, and the fourth row multiplied by $-\alpha_k$ into the fourth row of \eqref{eq:optimality-form-multiplier}.
	The other rows do not contain $\latentvariable$.
	The solution follows from \ref{item:optimality-forms-scaled}.
\end{proof}

\subsection{Convergence and the Lagrange Multiplier}
\label{subsection:lvpp-convergence}

This subsection proves that the LVPP iteration \eqref{eq:lvpp-system-control} converges to the solution of \eqref{eq:problem} for every sequence $\alpha_k$ with $\sum_k \alpha_k = \infty$ and for every initial guess $\latentvariable^0$.
It also identifies the limit of the normalized increments of the latent variable: they converge to the multiplier $\lambda^*$ of \cref{lemma:kkt}, while the latent variable itself, divided by the sum of the proximity parameters, converges to $-\lambda^*$.
The normalized increments thus measure the distance of $\control^k$ from optimality, and \cref{corollary:aposteriori-estimate} turns this into a computable bound on $\norm{\control^k - \control^*}_{\controlspace}$.
Given $\latentvariable^0 \in \latentspace$, the LVPP iteration determines $(\control^k, \latentvariable^k)$ for $k \ge 1$ as the unique solution of \eqref{eq:lvpp-system-control}, depending on the previous iterate.
By \cref{proposition:subproblem-wellposedness}, the normalized increments
\begin{equation}
	\label{eq:multiplier-iterate}
	\lambda^k
	\coloneqq
	\frac{\latentvariable^{k-1} - \latentvariable^k}{\alpha_k}
	=
	\lambda(\control^k)
	,
	\quad
	k \ge 1
	,
\end{equation}
satisfy $\Mctrl \, \lambda^k = J'(\control^k)$.
By \cref{lemma:optimality-forms}, they are the multipliers of the multiplier form \eqref{eq:optimality-form-multiplier}.

\begin{theorem}[Convergence]
	\label{theorem:lvpp-convergence}
	Let $\latentvariable^0 \in \latentspace$ and $\alpha_k > 0$ with $\sigma_k \coloneqq \sum_{j=1}^k \alpha_j \to \infty$.
	Let $(\control^k, \latentvariable^k)$ be the iterates of \cref{proposition:subproblem-wellposedness}, \ie, $\control^k$ is the unique solution of \eqref{eq:subproblem} and $\latentvariable^k = \latentvariable^{k-1} - \alpha_k \lambda(\control^k)$.
	Moreover, let $\control^*$ and $\lambda^*$ be as in \cref{lemma:kkt}.
	Then the following holds.
	\begin{enumeratelatin}
		\item
			\label{item:lvpp-convergence-rate}
			$\displaystyle \norm{\control^k - \control^*}_{\controlspace}^2 \le \frac{2}{\beta \sum_{j=2}^k \alpha_j} \intOmegactrl D_R(\control^*, \control^1) \, \d x < \infty$ for all $k \ge 2$;

		\item
			\label{item:lvpp-convergence-multiplier}
			$\norm{\lambda^k - \lambda^*}_{\controlspace} \le \paren(){\beta + \poincare^2 \, \norm{S}^2} \, \norm{\control^k - \control^*}_{\controlspace} \to 0$;

		\item
			\label{item:lvpp-convergence-divergence}
			$\latentvariable^k / \sigma_k \to -\lambda^*$ in $\latentspace$, and $\norm{\latentvariable^k}_{L^1(\Omegactrl)} \to \infty$ if $\lambda^* \ne 0$;

		\item
			\label{item:lvpp-convergence-kkt}
			$\norm{(\lambda^k)_-}_{\controlspace} \to 0$ and $\inner[big]{(\lambda^k)_+}{\control^k - u_a}_{\controlspace} \to 0$.
	\end{enumeratelatin}
\end{theorem}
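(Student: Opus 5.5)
For \ref{item:lvpp-convergence-rate} I use the standard Bregman proximal point telescoping argument, written in terms of the latent variables. Since $\control^k > u_a$ and $\latentvariable^k = R'(\control^k)$ for $k \ge 1$ (\cref{proposition:subproblem-wellposedness}), the three-point identity
\[
D_R(a,\control^k) - D_R(a,\control^{k-1}) + D_R(\control^k,\control^{k-1}) = \paren[big](){R'(\control^{k-1}) - R'(\control^k)}(a - \control^k)
\]
holds pointwise for $k \ge 2$. Here $D_R(\cdot,\cdot)$ is taken pointwise in $x$. I take $a = \control^*$ and integrate. The right-hand side becomes $\alpha_k \inner{\lambda^k}{\control^* - \control^k}_{\controlspace} = \alpha_k \dual{J'(\control^k)}{\control^* - \control^k}$. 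Strong convexity of $J$ gives
\[
\dual{J'(\control^k)}{\control^* - \control^k} \le J(\control^*) - J(\control^k) - \tfrac{\beta}{2}\norm{\control^k - \control^*}^2 .
\]
Next, $J(\control^k) \ge J(\control^*)$ because $\control^k \in \admissibleset$. Dropping the nonnegative $D_R(\control^k,\control^{k-1})$, this yields
\[
\tfrac{\beta}{2}\alpha_k \norm{\control^k-\control^*}^2 \le \intOmegactrl D_R(\control^*,\control^{k-1}) - \intOmegactrl D_R(\control^*,\control^k).
\]
The sequence $k\mapsto\norm{\control^k - \control^*}$ is nonincreasing. To see this, I test the proximal step of $\control^k$ against $\control^{k-1}$, which gives $J(\control^k) \le J(\control^{k-1})$. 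Then I use the inequality above with the three-point identity applied at $a=\control^{k-1}$; if this route is awkward, I instead bound $\sum_j \alpha_j \norm{\control^j-\control^*}^2$ and use monotonicity of $J(\control^k)$ combined with $\tfrac\beta2\norm{\control^k-\control^*}^2 \le J(\control^k)-J(\control^*)$. With either argument in hand, summing from $j=2$ to $k$ gives the stated bound.

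Finiteness of $\int D_R(\control^*,\control^1)$ needs a separate check. We have $\control^1 - u_a \in L^2$ and $\ln(\control^1-u_a) = \latentvariable^1 \in L^2$, and $\control^* - u_a \in L^2$. Then $(\control^*-u_a)\ln(\control^*-u_a)$ is integrable, since $t\ln t \le t^2$ for large $t$ and it is bounded for small $t$. Likewise $(\control^*-u_a)\ln(\control^1-u_a)$ is integrable, being a product of two $L^2$ functions. This step is the main subtlety: the integrals must be justified in the three-point identity, which holds pointwise with all terms integrable for $k\ge2$ because $\latentvariable^{k-1},\latentvariable^k \in L^2$.

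For \ref{item:lvpp-convergence-multiplier}, I write $\lambda^k - \lambda^* = \lambda(\control^k) - \lambda(\control^*)$. This is $\lambda'$ applied to the difference, and the norm of $\lambda'$ is the Lipschitz constant $\beta + \poincare^2\norm{S}^2$ (via the Riesz isometry $\Mctrl$). For \ref{item:lvpp-convergence-divergence}, I write $\latentvariable^k = \latentvariable^0 - \sum_{j=1}^k \alpha_j \lambda^j$, so that
\[
\latentvariable^k/\sigma_k + \lambda^* = \latentvariable^0/\sigma_k - \sigma_k^{-1}\sum_j \alpha_j(\lambda^j - \lambda^*) .
\]
This tends to zero by the Cesàro (Toeplitz) lemma, since $\lambda^j \to \lambda^*$ and $\sigma_k\to\infty$. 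Then $\norm{\latentvariable^k}_{L^1} \ge \sigma_k(\norm{\lambda^*}_{L^1} - o(1)) \to \infty$, using that $L^2$ convergence implies $L^1$ convergence on the bounded set.

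For \ref{item:lvpp-convergence-kkt}, I use $\lambda^*\ge0$ from \cref{lemma:kkt}. Then $\norm{(\lambda^k)_-} \le \norm{\lambda^k - \lambda^*}$, because $t\mapsto t_-$ is 1-Lipschitz and $(\lambda^*)_- = 0$. For the complementarity term, since $\lambda^*(\control^*-u_a)=0$, I split
\[
\inner{(\lambda^k)_+}{\control^k-u_a} = \inner{(\lambda^k)_+ - \lambda^*}{\control^k - u_a} + \inner{\lambda^*}{\control^k - \control^*} .
\]
Both terms vanish in the limit by \ref{item:lvpp-convergence-rate} and \ref{item:lvpp-convergence-multiplier}, together with boundedness of $\control^k - u_a$ in $L^2$.
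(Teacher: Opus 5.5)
Parts \ref{item:lvpp-convergence-multiplier}--\ref{item:lvpp-convergence-kkt} and the finiteness of $\intOmegactrl D_R(\control^*, \control^1) \, \d x$ are correct and essentially as in the paper. The last step of \ref{item:lvpp-convergence-rate}, however, has a gap. Your summed inequality only controls the weighted sum $\frac{\beta}{2} \sum_{j=2}^k \alpha_j \norm{\control^j - \control^*}_{\controlspace}^2$. To pass to the last iterate you assert that $k \mapsto \norm{\control^k - \control^*}_{\controlspace}$ is nonincreasing, but neither ingredient you mention yields this. The inequality $J(\control^k) \le J(\control^{k-1})$ concerns values, and the three-point identity at $a = \control^{k-1}$ does not compare $L^2$ distances to $\control^*$.

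The claim is not true in general. Bregman proximal iterates are Fejér monotone with respect to $\intOmegactrl D_R(\control^*, \cdot) \, \d x$, not with respect to the $L^2$ norm. For small $\alpha_k$ the iteration follows $\dot\control = -(\control - u_a) \, \lambda(\control)$. For an interior solution and $v = \control - \control^*$, this gives $\frac{d}{dt} \frac12 \norm{\control - \control^*}_{\controlspace}^2 = -\inner{v}{(\control - u_a) \, \lambda' v}_{\controlspace}$. This can be positive, because multiplication by a strongly varying slack does not preserve the positivity of $\lambda'$.

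Your fallback does not close the gap as stated either. A bound on $\sum_j \alpha_j \norm{\control^j - \control^*}_{\controlspace}^2$ cannot be converted into a bound on $\sum_j \alpha_j (J(\control^j) - J(\control^*))$ via $\frac{\beta}{2} \norm{\control^j - \control^*}_{\controlspace}^2 \le J(\control^j) - J(\control^*)$, since that inequality points the wrong way.

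The repair is to keep the value gap you discarded when you used $J(\control^k) \ge J(\control^*)$.
\begin{itemize}
\item Use plain convexity, $J'(\control^j)(\control^* - \control^j) \le J(\control^*) - J(\control^j)$, in the integrated three-point identity. This gives $\alpha_j (J(\control^j) - J(\control^*)) \le \intOmegactrl D_R(\control^*, \control^{j-1}) \, \d x - \intOmegactrl D_R(\control^*, \control^j) \, \d x$ for $j \ge 2$.
\item Sum over $j$ and use $J(\control^j) \ge J(\control^k)$ for $2 \le j \le k$, which is the monotonicity you correctly derived by testing with $\control^{j-1}$. This gives the value rate $(\sum_{j=2}^k \alpha_j)(J(\control^k) - J(\control^*)) \le \intOmegactrl D_R(\control^*, \control^1) \, \d x$.
\item Use strong convexity at $\control^*$ together with $J'(\control^*)(\control^k - \control^*) = \inner{\lambda^*}{\control^k - u_a}_{\controlspace} \ge 0$ from \eqref{eq:kkt}. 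This yields $\frac{\beta}{2} \norm{\control^k - \control^*}_{\controlspace}^2 \le J(\control^k) - J(\control^*)$, and hence the claim.
\end{itemize}
This is exactly the paper's route: the Chen--Teboulle value rate for the Bregman proximal point method, combined with strong convexity at the solution.
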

Here $\lambda_{\pm}$ denotes the positive and negative parts of~$\lambda$.

\begin{proof}
	\ref{item:lvpp-convergence-rate}:
	By \cref{proposition:subproblem-wellposedness}, $\control^1 > u_a$ \ale and $\latentvariable^1 = R'(x, \control^1)$, so the substitution of \cref{subsection:lvpp-method} identifies $\control^2, \control^3, \dots$ with the iterates of the Bregman proximal point method \eqref{eq:bregman-proximal-point-step} started at $\control^1$ with the parameters $\alpha_2, \alpha_3, \dots$.
	Since $-1 \le R(x, t) \le (t - u_a(x))^2$ for $t \ge u_a(x)$, the integral of $D_R(\control^*, \control^1)$ over $\Omegactrl$ is finite.
	For $k \ge 2$,
	\begin{align*}
		\frac{\beta}{2} \, \norm{\control^k - \control^*}_{\controlspace}^2
		&
		\le
		J(\control^k) - J(\control^*) - J'(\control^*)(\control^k - \control^*)
		\le
		J(\control^k) - J(\control^*)
		\\
		&
		\le
		\frac{1}{\sum_{j=2}^k \alpha_j} \intOmegactrl D_R(\control^*, \control^1) \, \d x
		,
	\end{align*}
	where the first inequality is the strong convexity of $J$ at $\control^*$, the second holds since $J'(\control^*)(\control^k - \control^*) = \inner{\lambda^*}{\control^k - u_a}_{\controlspace} \ge 0$ by \eqref{eq:kkt}, and the third is the value rate of the Bregman proximal point method (\cite[Thm.~3.4]{ChenTeboulle:1993:1}).
	Its proof is written in $\R^n$ and carries over to $\controlspace$, since it uses only the convexity of $J$, the optimality conditions $\alpha_j J'(\control^j) = \Mctrl (\latentvariable^{j-1} - \latentvariable^j)$ with $\latentvariable^j = R'(x, \control^j)$, and the three-point identity for $D_R$ (\cite[Lemma~3.1]{ChenTeboulle:1993:1}).
	The integrals of $D_R$ over $\Omegactrl$ that occur in it are finite by the bounds on $R$ above and since $\latentvariable^j \in \latentspace$ by \cref{proposition:subproblem-wellposedness}.
	For the same argument in function space, applied to the obstacle problem, see the proof of \cite[Thm.~4.13]{KeithSurowiec:2024:1}.

	\ref{item:lvpp-convergence-multiplier}:
	For $\control, \controltest \in \controlspace$, $\dual{\dualmap{S} \Mobs \, S \control}{\controltest} = \intOmegaobs (S \control) (S \controltest) \, \d x \le \poincare^2 \, \norm{S}^2 \, \norm{\control}_{\controlspace} \, \norm{\controltest}_{\controlspace}$, so
	\begin{align*}
		\norm{\lambda^k - \lambda^*}_{\controlspace}
		&
		=
		\norm{J'(\control^k) - J'(\control^*)}_{\dualspace{\controlspace}}
		=
		\norm[big]{\beta \Mctrl (\control^k - \control^*) + \dualmap{S} \Mobs \, S (\control^k - \control^*)}_{\dualspace{\controlspace}}
		\\
		&
		\le
		\paren(){\beta + \poincare^2 \, \norm{S}^2} \, \norm{\control^k - \control^*}_{\controlspace}
		,
	\end{align*}
	where the first equality holds since $\Mctrl (\lambda^k - \lambda^*) = J'(\control^k) - J'(\control^*)$ by \eqref{eq:multiplier-map} and $\Mctrl$ is an isometry, and the second is \eqref{eq:objective-derivative}.
	The limit is \ref{item:lvpp-convergence-rate}.

	\ref{item:lvpp-convergence-divergence}:
	By \eqref{eq:multiplier-iterate}, $\latentvariable^k = \latentvariable^0 - \sum_{j=1}^k \alpha_j \lambda^j$, so
	\begin{equation*}
		\frac{\latentvariable^k}{\sigma_k} + \lambda^*
		=
		\frac{\latentvariable^0}{\sigma_k} - \frac{1}{\sigma_k} \sum_{j=1}^k \alpha_j \, \paren[normal](){\lambda^j - \lambda^*}
		.
	\end{equation*}
	Given $\varepsilon > 0$, choose $j_0$ with $\norm{\lambda^j - \lambda^*}_{\controlspace} \le \varepsilon$ for $j > j_0$ by \ref{item:lvpp-convergence-multiplier}.
	The norm of the right-hand side is then at most $\paren[big](){\norm{\latentvariable^0}_{\latentspace} + \sum_{j=1}^{j_0} \alpha_j \, \norm{\lambda^j - \lambda^*}_{\controlspace}} / \sigma_k + \varepsilon$, which tends to $\varepsilon$ as $k \to \infty$.
	By the first part,
	\begin{equation*}
		\abs[Big]{\frac{\norm{\latentvariable^k}_{L^1(\Omegactrl)}}{\sigma_k} - \norm{\lambda^*}_{L^1(\Omegactrl)}}
		\le
		\norm[Big]{\frac{\latentvariable^k}{\sigma_k} + \lambda^*}_{L^1(\Omegactrl)}
		\le
		\abs{\Omegactrl}^{1/2} \, \norm[Big]{\frac{\latentvariable^k}{\sigma_k} + \lambda^*}_{\latentspace}
		\to
		0
		.
	\end{equation*}
	Hence $\norm{\latentvariable^k}_{L^1(\Omegactrl)} \to \infty$ if $\lambda^* \ne 0$, since $\sigma_k \to \infty$.

	\ref{item:lvpp-convergence-kkt}:
	Since $t \mapsto t_-$ is Lipschitz continuous with constant $1$ on $\R$ and $(\lambda^*)_- = 0$, we have $\norm{(\lambda^k)_-}_{\controlspace} = \norm{(\lambda^k)_- - (\lambda^*)_-}_{\controlspace} \le \norm{\lambda^k - \lambda^*}_{\controlspace} \to 0$ by \ref{item:lvpp-convergence-multiplier}.
	Likewise $(\lambda^k)_+ \to (\lambda^*)_+ = \lambda^*$ in $\controlspace$, and $\control^k - u_a \to \control^* - u_a$ in $\controlspace$ by \ref{item:lvpp-convergence-rate}, so $\inner{(\lambda^k)_+}{\control^k - u_a}_{\controlspace} \to \inner{\lambda^*}{\control^* - u_a}_{\controlspace} = 0$ by \eqref{eq:kkt}.
\end{proof}

The pair $(\control^k, \lambda^k)$ satisfies the first relation of \eqref{eq:kkt} exactly, and the sign condition and the complementarity hold in the limit by \cref{theorem:lvpp-convergence}~\ref{item:lvpp-convergence-kkt}.
Their defects, the negative part of $\lambda^k$ and the product of its positive part with the slack $\control^k - u_a$, are computable from the iterates.
The following corollary bounds the error in the control by these two defects.

\begin{corollary}[A posteriori error estimate]
	\label{corollary:aposteriori-estimate}
	In the setting of \cref{theorem:lvpp-convergence}, for every $k \ge 1$,
	\begin{equation}
		\label{eq:aposteriori-estimate}
		\norm{\control^k - \control^*}_{\controlspace}^2
		\le
		\frac{1}{\beta} \paren[Big](){\norm{(\lambda^k)_-}_{\controlspace} \, \norm{\control^* - u_a}_{\controlspace} + \inner{(\lambda^k)_+}{\control^k - u_a}_{\controlspace}}
		,
	\end{equation}
	where $\norm{\control^* - u_a}_{\controlspace} \le \norm{u_a}_{\controlspace} + \sqrt{2 J(\controltest) / \beta}$ for every $\controltest \in \admissibleset$.
\end{corollary}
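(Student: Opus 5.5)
The estimate follows from strong convexity of $J$ combined with the first relation of \eqref{eq:kkt} for the iterate. Since $J$ is $\beta$-strongly convex and $J'(\control) = \Mctrl \, \lambda(\control)$ by \eqref{eq:multiplier-map}, monotonicity of $J'$ gives
\begin{equation*}
	\beta \, \norm{\control^k - \control^*}_{\controlspace}^2
	\le
	\inner{\lambda^k - \lambda^*}{\control^k - \control^*}_{\controlspace}
	,
\end{equation*}
where $\lambda^k = \lambda(\control^k)$ by \eqref{eq:multiplier-iterate} and $\lambda^* = \lambda(\control^*)$ by \cref{lemma:kkt}. I will split the right-hand side as $\inner{\lambda^k}{\control^k - \control^*}_{\controlspace} - \inner{\lambda^*}{\control^k - \control^*}_{\controlspace}$. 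For the second term, \eqref{eq:kkt} gives $\inner{\lambda^*}{\control^* - u_a}_{\controlspace} = 0$, and $\lambda^* \ge 0$ together with $\control^k \ge u_a$ (by \cref{proposition:subproblem-wellposedness}) gives $\inner{\lambda^*}{\control^k - u_a}_{\controlspace} \ge 0$. Hence $-\inner{\lambda^*}{\control^k - \control^*}_{\controlspace} \le 0$, and this term can be dropped.

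For the first term, I write $\control^k - \control^* = (\control^k - u_a) - (\control^* - u_a)$ and $\lambda^k = (\lambda^k)_+ - (\lambda^k)_-$. This yields four contributions:
\begin{itemize}
	\item $\inner{(\lambda^k)_+}{\control^k - u_a}_{\controlspace}$, which is kept;
	\item $-\inner{(\lambda^k)_-}{\control^k - u_a}_{\controlspace} \le 0$, since $\control^k > u_a$;
	\item $-\inner{(\lambda^k)_+}{\control^* - u_a}_{\controlspace} \le 0$, since $\control^* \ge u_a$;
	\item $\inner{(\lambda^k)_-}{\control^* - u_a}_{\controlspace}$, which is bounded by $\norm{(\lambda^k)_-}_{\controlspace} \, \norm{\control^* - u_a}_{\controlspace}$ via Cauchy--Schwarz.
\end{itemize}
Dividing by $\beta$ gives \eqref{eq:aposteriori-estimate}. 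All pairings are finite because every function involved lies in $\controlspace$ (note that $u_a \in L^\infty \subseteq L^2$ on the bounded set $\Omegactrl$). The argument holds for every $k \ge 1$, since it uses only $\control^k \in \admissibleset$ and the identity $\Mctrl \lambda^k = J'(\control^k)$.

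For the bound on $\norm{\control^* - u_a}_{\controlspace}$, I apply the triangle inequality, $\norm{\control^* - u_a} \le \norm{u_a} + \norm{\control^*}$. Since every term of $J$ is nonnegative, $\frac{\beta}{2}\norm{\control^*}^2 \le J(\control^*) \le J(\controltest)$ for every $\controltest \in \admissibleset$, by optimality of $\control^*$. This gives $\norm{\control^*} \le \sqrt{2J(\controltest)/\beta}$.

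The only delicate point is the sign bookkeeping in the four-term split, in particular making sure that the pairing of $\lambda^*$ with $\control^k - \control^*$ has a favourable sign. This needs feasibility of $\control^k$, which \cref{proposition:subproblem-wellposedness} supplies. Apart from that, the proof is routine.
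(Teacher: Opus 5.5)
Your proposal is correct and follows the paper's proof essentially step by step. You use strong monotonicity of $J'$ with constant $\beta$, expand the pairing of $\lambda^k - \lambda^*$ with $u^k - u^*$ into the same terms via the complementarity of $\lambda^*$, discard the nonpositive ones, apply Cauchy--Schwarz to the term with $(\lambda^k)_-$, and bound $\|u^* - u_a\|$ through $\frac{\beta}{2}\|u^*\|^2 \le J(u^*) \le J(v)$ together with the triangle inequality.
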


\begin{proof}
	For $k \ge 1$,
	\begin{align*}
		\beta \, \norm{\control^k - \control^*}_{\controlspace}^2
		&
		\le
		\paren[big][]{J'(\control^k) - J'(\control^*)}(\control^k - \control^*)
		=
		\inner{\lambda^k - \lambda^*}{\control^k - \control^*}_{\controlspace}
		\\
		&
		=
		\inner{(\lambda^k)_+}{\control^k - u_a}_{\controlspace} + \inner{(\lambda^k)_-}{\control^* - u_a}_{\controlspace}
		\\
		&
		\quad
		\quad
		- \inner{(\lambda^k)_+}{\control^* - u_a}_{\controlspace} - \inner{(\lambda^k)_-}{\control^k - u_a}_{\controlspace} - \inner{\lambda^*}{\control^k - u_a}_{\controlspace}
		\\
		&
		\le
		\inner{(\lambda^k)_+}{\control^k - u_a}_{\controlspace} + \norm{(\lambda^k)_-}_{\controlspace} \, \norm{\control^* - u_a}_{\controlspace}
		.
	\end{align*}
	The first inequality holds by \eqref{eq:objective-derivative}, since $\dualmap{S} \Mobs \, S$ is positive semidefinite.
	The first equality is \eqref{eq:multiplier-map}.
	The second equality uses $\inner{\lambda^*}{\control^* - u_a}_{\controlspace} = 0$ from \eqref{eq:kkt}.
	The last inequality holds since the three terms of the third line are nonpositive.
	The bound on $\norm{\control^* - u_a}_{\controlspace}$ follows from $\frac{\beta}{2} \, \norm{\control^*}_{\controlspace}^2 \le J(\control^*) \le J(\controltest)$ for $\controltest \in \admissibleset$.
\end{proof}

By \cref{theorem:lvpp-convergence}~\ref{item:lvpp-convergence-kkt}, the right-hand side of \eqref{eq:aposteriori-estimate} tends to zero, but the norm of the increment $\latentvariable^k - \latentvariable^{k-1}$ in general does not.
By \eqref{eq:multiplier-iterate}, it equals $\alpha_k \, \norm{\lambda^k}_{\controlspace}$, and $\norm{\lambda^k}_{\controlspace} \to \norm{\lambda^*}_{\controlspace}$ by \cref{theorem:lvpp-convergence}~\ref{item:lvpp-convergence-multiplier}.
Hence $\latentvariable^k - \latentvariable^{k-1}$ tends to zero only if $\lambda^* = 0$ or $\alpha_k \to 0$, and it is not suitable as a stopping criterion.

\begin{remark}[Convergence rates]
	\label{remark:convergence-rates}
	\Cref{item:lvpp-convergence-rate} of \cref{theorem:lvpp-convergence} combines the value rate of \cite{ChenTeboulle:1993:1} for the Bregman proximal point method, see also \cite[eq.~(2.5)]{DokkenFarrellKeithPapadopoulosSurowiec:2025:2}, with the strong convexity of $J$.
	Items~\ref{item:lvpp-convergence-rate} and \ref{item:lvpp-convergence-multiplier} are proven for the obstacle problem in \cite[Thm.~4.13]{KeithSurowiec:2024:1}, and for the discretized method in a general framework in \cite[Thm.~3.6]{KeithMasriZeinhofer:2025:1}.
	The continuous iteration, whose well-posedness in the correct latent space is the difficulty (\cite[Sec.~2.3]{KeithMasriZeinhofer:2025:1}), had been analyzed for the obstacle problem only, and, for the mirror descent variant, for bound-constrained densities (\cite[Sec.~4]{KeithKimLazarovSurowiec:2025:1}).
	\Cref{section:lvpp-iteration} provides it for control constraints, where the latent space is $L^2(\Omegactrl)$ and the well-posedness follows from standard arguments of convex analysis.
	\Cref{item:lvpp-convergence-divergence} adds the growth of the latent variable, for which \cite[Thm.~A.1]{KeithMasriZeinhofer:2025:1} gives the linear upper bound, and \cref{corollary:aposteriori-estimate}, a standard consequence of strong convexity, provides a computable error bound in place of the increment tests used as stopping criteria in \cite{DokkenFarrellKeithPapadopoulosSurowiec:2025:2,KeithSurowiec:2024:1}.
	For a fixed proximity parameter, \cref{item:lvpp-convergence-rate} shows that $\norm{\control^k - \control^*}_{\controlspace}^2$ decays at least like $1/k$.
	This is a worst-case bound, and for the obstacle problem, experiments suggest a better rate under strict complementarity (\cite[Rem.~4.18]{KeithSurowiec:2024:1}).
\end{remark}

\section{Newton's Method for the LVPP Subproblem}
\label{section:newton-method}

We solve each subproblem \eqref{eq:subproblem} by Newton's method applied to its scaled form \eqref{eq:optimality-form-scaled}.
In this section, we prove the local quadratic convergence of this method.
To this end, we consider the Newton iteration as an iteration in the variables $(\control,\latentvariable)$ only.
Due to the linearity of the state equation, this is equivalent to the Newton iteration of the full $(\state,\control,\adjointstate,\latentvariable)$ system.

We utilize that the control and latent variables belong to $L^\infty(\Omegactrl)$ due to \cref{proposition:latent-linfty-bounds}, a setting in which the exponential superposition operator is differentiable.
For the reduced system, we obtain local quadratic convergence in $L^\infty(\Omegactrl) \times L^\infty(\Omegactrl)$ when started from the previous outer iterate, as soon as $\alpha_k \, \norm{\lambda^{k-1}}_{L^\infty(\Omegactrl)}$ lies below a constant that depends neither on $\alpha_k$ nor on the iterate nor on $k$ (\cref{theorem:newton-local-convergence}).

This section is structured as follows.
In \cref{subsection:newton-system} we present the full Newton system and prove the existence of a unique solution.
\Cref{subsection:newton-qp} interprets the Newton system as an optimality system of a linear-quadratic optimal control problem, which offers a link to the primal-dual active set method.
The local quadratic convergence is shown in \cref{subsection:newton-local-convergence}.

\subsection{The Newton System}
\label{subsection:newton-system}

Let $(\current{\state}, \current{\control}, \current{\adjointstate}, \current{\latentvariable}) \in \statespace \times \controlspace \times \statespace \times L^\infty(\Omegactrl)$ be the current Newton iterate.
The only nonlinearity of \eqref{eq:optimality-form-scaled} is the superposition operator $\conjugatederivative$, which is continuously Fréchet differentiable on $L^\infty(\Omegactrl)$; see \cite[Lemma~A.6]{KeithSurowiec:2024:1}, \cite[Thm.~7]{GoldbergKampowskyTroeltzsch:1992:1}.
Its derivative at $\current{\latentvariable}$ is the multiplication by the slack
\begin{equation}
	\label{eq:slack}
	D
	\coloneqq
	\conjugatesecondderivative(\current{\latentvariable})
	=
	\exp(\current{\latentvariable})
	=
	\conjugatederivative(\current{\latentvariable}) - u_a
	.
\end{equation}
The last row of \eqref{eq:optimality-form-scaled} is therefore linearized by $-\frac{1}{\alpha_k} \MDctrl$, where $\MDctrl \in \cL(\latentspace, \dualspace{\latentspace})$ is the operator with $\dual{\MDctrl \latentvariable}{\latenttest} \coloneqq \intOmegactrl D \, \latentvariable \, \latenttest \, \d x$.
The residuals of the four rows of \eqref{eq:optimality-form-scaled} at the current iterate are $r_1$, $\Mctrl \, r_2$, $r_3$ and $\Mctrl \, r_4$ with
\begin{equation}
	\label{eq:newton-residuals}
	\begin{aligned}
		r_1
		&
		\coloneqq
		\Mobs \, \current{\state} - \Eobs \, y_d + \Kstate \current{\adjointstate}
		\in \dualspace{\statespace}
		,
		&
		r_3
		&
		\coloneqq
		\Kstate \current{\state} - \Ec \, \current{\control}
		\in \dualspace{\statespace}
		,
		\\
		r_2
		&
		\coloneqq
		\beta \current{\control} - \restrictc \, \current{\adjointstate} + \frac{1}{\alpha_k} \paren[big](){\current{\latentvariable} - \latentvariable^{k-1}}
		\in \controlspace
		,
		&
		r_4
		&
		\coloneqq
		\frac{1}{\alpha_k} \paren[big](){\current{\control} - \conjugatederivative(\current{\latentvariable})}
		\in \controlspace
		,
	\end{aligned}
\end{equation}
where $r_2, r_4 \in \controlspace$ represent the residuals of the second and the fourth row as functions on the control domain.
The Newton system reads
\begin{equation}
	\label{eq:newton-system}
	\begin{bmatrix}
		\Mobs & 0 & \Kstate & 0
		\\
		0 & \beta \Mctrl & -\dualmap{\Ec} & \frac{1}{\alpha_k} \Mctrl
		\\
		\Kstate & -\Ec & 0 & 0
		\\
		0 & \frac{1}{\alpha_k} \Mctrl & 0 & -\frac{1}{\alpha_k} \MDctrl
	\end{bmatrix}
	\begin{bmatrix}
		\increment{\state} \\ \increment{\control} \\ \increment{\adjointstate} \\ \increment{\latentvariable}
	\end{bmatrix}
	=
	-
	\begin{bmatrix}
		r_1 \\ \Mctrl \, r_2 \\ r_3 \\ \Mctrl \, r_4
	\end{bmatrix}
\end{equation}
in $\dualspace{\statespace} \times \dualspace{\controlspace} \times \dualspace{\statespace} \times \dualspace{\latentspace}$.
Its second and fourth rows act pointwise on the control domain, since $\dualmap{\Ec} = \Mctrl \, \restrictc$ and $\MDctrl$ is $\Mctrl$ composed with the multiplication by $D$.
They can therefore be solved for $\increment{\control}$ and $\increment{\latentvariable}$ in closed form, which leaves a system in $(\increment{\state}, \increment{\adjointstate})$ alone.
Solving them introduces the coefficient
\begin{equation}
	\label{eq:newton-coefficient}
	W
	\coloneqq
	\frac{\alpha_k D}{1 + \alpha_k \beta D}
	\in
	\interval[][){0}{\textstyle\frac{1}{\beta}}
	.
\end{equation}

\begin{lemma}[Solvability of the Newton system]
	\label{lemma:newton-solvability}
	The Newton system \eqref{eq:newton-system} has a unique solution $(\increment{\state}, \increment{\control}, \increment{\adjointstate}, \increment{\latentvariable}) \in \statespace \times \controlspace \times \statespace \times \latentspace$.
\end{lemma}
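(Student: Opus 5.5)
The plan is to eliminate the two pointwise rows and reduce \eqref{eq:newton-system} to a coercive problem on $\statespace \times \statespace$, or equivalently to a strongly convex quadratic program in the control. We use that $\current{\latentvariable} \in L^\infty(\Omegactrl)$, so $D = \exp(\current{\latentvariable})$ satisfies $0 < \essinf D \le D \le \esssup D < \infty$. Hence multiplication by $D$, by $1/D$ and by $W$ are bounded on $\controlspace$, and $W \ge c > 0$ a.e.

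First, the fourth row gives, after cancelling the injective $\Mctrl/\alpha_k$, the pointwise relation $\increment{\control} - D\,\increment{\latentvariable} = -\alpha_k r_4$. Therefore $\increment{\latentvariable} = D^{-1}(\increment{\control} + \alpha_k r_4) \in \latentspace$. Substituting this into the second row, written pointwise as $\beta\increment{\control} - \restrictc\increment{\adjointstate} + \frac{1}{\alpha_k}\increment{\latentvariable} = -r_2$, gives
\[
	\paren[big](){\beta + \tfrac{1}{\alpha_k D}}\increment{\control} = \restrictc\increment{\adjointstate} - r_2 - D^{-1} r_4 ,
\]
that is,
\[
	\increment{\control} = W\paren[big](){\restrictc\increment{\adjointstate} - r_2 - D^{-1}r_4} .
\]
This is exactly the coefficient \eqref{eq:newton-coefficient}. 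The map $(\increment{\adjointstate}) \mapsto (\increment{\control}, \increment{\latentvariable})$ is affine and bounded into $\controlspace \times \latentspace$. Conversely, any solution of the full system must satisfy these formulas. The step is reversible, so uniqueness and existence reduce to the remaining two rows.

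Next, insert $\increment{\control}$ into rows one and three. Row three gives $\Kstate\increment{\state} = \Ec W\restrictc\increment{\adjointstate} + f$ with known $f\in\dualspace{\statespace}$, and row one gives $\Mobs\increment{\state} + \Kstate\increment{\adjointstate} = -r_1$. Rather than analyzing this indefinite $2\times2$ system directly, I would interpret it as the optimality system of
\[
	\text{minimize } \tfrac12\intOmegaobs (S v)^2\,dx + \tfrac12\intOmegactrl W^{-1} v^2\,dx + \text{(linear terms)} \quad \text{over } v\in\controlspace .
\]
This problem arises from eliminating the $\increment{\state}$ affine shift $\increment{\state} = S v + \Kstate^{-1}(\text{data})$. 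It is strongly convex, since $W^{-1} \ge \beta > 0$, and $W^{-1} \le \beta + 1/(\alpha_k \essinf D)$ is bounded, so by Lax–Milgram it has a unique minimizer. Its first-order condition, with the adjoint defined by $\Kstate\increment{\adjointstate} = -r_1 - \Mobs\increment{\state}$, reproduces $v = W(\restrictc\increment{\adjointstate} - \cdots)$. Equivalently, one can write the reduced operator on $v$ as $W^{-1}\Mctrl + \dualmap{S}\Mobs S$, which is bounded and coercive on $\controlspace$.

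The main care point is bookkeeping the data terms so that the reduced equation for $v$ is exactly equivalent to rows one and three together with the elimination formulas, with all data in the correct dual spaces. Once that is in place, existence and uniqueness in $\controlspace$ follow from the coercivity bound $\dual{(W^{-1}\Mctrl + \dualmap{S}\Mobs S)v}{v} \ge \beta\norm{v}_{\controlspace}^2$. Then $\increment{\state}, \increment{\adjointstate} \in \statespace$ are obtained by solving with $\Kstate$, and $\increment{\latentvariable} \in \latentspace$ is recovered from the formula above. This yields the unique solution of \eqref{eq:newton-system}.
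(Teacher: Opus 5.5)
Your proof is correct, but it takes a different route from the paper.

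\textbf{The paper's route.} It eliminates $\increment{\control}$ and $\increment{\latentvariable}$ together by inverting the pointwise $2 \times 2$ block. That inverse has bounded entries simply because $1 + \alpha_k \beta D \ge 1$. What remains is the indefinite Schur complement $\cS$ on $\statespace \times \statespace$, with diagonal blocks $\Mobs$ and $-\Ec W \restrictc$ and off-diagonal blocks $\Kstate$. The paper shows $\cS$ is an isomorphism by the Fredholm alternative, using the compactness of $\statespace \hookrightarrow L^2(\Omega)$, together with an injectivity argument: test the two rows with $\increment{\state}$ and $\increment{\adjointstate}$ and subtract.

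\textbf{Your route.} You eliminate $\increment{\latentvariable}$ alone via $D^{-1}$, then eliminate the states through $S$. This leaves the single operator $W^{-1} \Mctrl + \dualmap{S} \Mobs \, S$ on $\controlspace$, which is bounded and $\beta$-coercive, so Lax--Milgram applies.

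\textbf{Comparison.} Your argument is more elementary: it needs no compact embedding and no Fredholm theory. It also proves, in the same stroke, the linear-quadratic interpretation that the paper establishes only afterwards in \cref{proposition:newton-qp}. The price is that you need $\essinf D > 0$ so that $D^{-1}$ and $W^{-1}$ are bounded on $\controlspace$. That holds here because $\current{\latentvariable} \in L^\infty(\Omegactrl)$, but your continuity constants degenerate as $\essinf D \to 0$. The paper's argument uses only $0 \le W < 1/\beta$, so it would work for any nonnegative $D \in L^\infty(\Omegactrl)$, even one vanishing on a set of positive measure, where $D^{-1}$ and $W^{-1}$ are unavailable.

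A small inaccuracy: your opening sentence speaks of a coercive problem on $\statespace \times \statespace$. The system there is indefinite, and the coercivity you actually use lives on $\controlspace$.
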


\begin{proof}
	We order the unknowns as $(\increment{\state}, \increment{\adjointstate}, \increment{\control}, \increment{\latentvariable})$ and the rows of \eqref{eq:newton-system} as first, third, second and fourth.
	Then the operator of \eqref{eq:newton-system} has the block form
	\begin{equation*}
		\begin{bmatrix}
			N_{11} & N_{12}
			\\
			N_{21} & N_{22}
		\end{bmatrix}
		\quad
		\text{with}
		\quad
		\begin{aligned}
			N_{11}
			&
			=
			\begin{bmatrix}
				\Mobs & \Kstate
				\\
				\Kstate & 0
			\end{bmatrix}
			,
			&
			N_{12}
			&
			=
			\begin{bmatrix}
				0 & 0
				\\
				-\Ec & 0
			\end{bmatrix}
			,
			\\
			N_{21}
			&
			=
			\begin{bmatrix}
				0 & -\dualmap{\Ec}
				\\
				0 & 0
			\end{bmatrix}
			,
			&
			N_{22}
			&
			=
			\frac{1}{\alpha_k}
			\begin{bmatrix}
				\alpha_k \beta \Mctrl & \Mctrl
				\\
				\Mctrl & -\MDctrl
			\end{bmatrix}
			.
		\end{aligned}
	\end{equation*}
	Since $\MDctrl \latenttest = \Mctrl (D \, \latenttest)$, the block $N_{22}$ is $\Mctrl$ composed with the pointwise multiplication by the matrix
	\begin{equation*}
		\frac{1}{\alpha_k}
		\begin{bmatrix}
			\alpha_k \beta & 1
			\\
			1 & -D
		\end{bmatrix}
		,
		\quad
		\text{whose inverse is}
		\quad
		\frac{1}{1 + \alpha_k \beta D}
		\begin{bmatrix}
			\alpha_k D & \alpha_k
			\\
			\alpha_k & -\alpha_k^2 \beta
		\end{bmatrix}
		.
	\end{equation*}
	The entries of the inverse belong to $L^\infty(\Omegactrl)$, since $D = \exp(\current{\latentvariable}) > 0$.
	The first entry of the inverse is $W$ by \eqref{eq:newton-coefficient}.
	Hence $N_{22}$ is an isomorphism from $\controlspace \times \latentspace$ onto $\dualspace{\controlspace} \times \dualspace{\latentspace}$, and $N_{22}^{-1}$ is $\Mctrl^{-1}$ followed by the multiplication by the inverse matrix.
	Since $\dualmap{\Ec} = \Mctrl \, \restrictc$, the first component of $N_{22}^{-1} N_{21} (\increment{\state}, \increment{\adjointstate})$ is $-W \restrictc \increment{\adjointstate}$.
	Since $N_{12} (\increment{\control}, \increment{\latentvariable}) = (0, -\Ec \, \increment{\control})^\transp$, the Schur complement of $N_{22}$ is
	\begin{equation*}
		\cS
		\coloneqq
		N_{11} - N_{12} N_{22}^{-1} N_{21}
		=
		\begin{bmatrix}
			\Mobs & \Kstate
			\\
			\Kstate & -\Ec \, W \restrictc
		\end{bmatrix}
		\in
		\cL(\statespace \times \statespace, \dualspace{\statespace} \times \dualspace{\statespace})
		.
	\end{equation*}
	The operator of \eqref{eq:newton-system} factors as
	\begin{equation*}
		\begin{bmatrix}
			N_{11} & N_{12}
			\\
			N_{21} & N_{22}
		\end{bmatrix}
		=
		\begin{bmatrix}
			\id_{\dualspace{\statespace} \times \dualspace{\statespace}} & N_{12} N_{22}^{-1}
			\\
			0 & \id_{\dualspace{\controlspace} \times \dualspace{\latentspace}}
		\end{bmatrix}
		\begin{bmatrix}
			\cS & 0
			\\
			0 & N_{22}
		\end{bmatrix}
		\begin{bmatrix}
			\id_{\statespace \times \statespace} & 0
			\\
			N_{22}^{-1} N_{21} & \id_{\controlspace \times \latentspace}
		\end{bmatrix}
		,
	\end{equation*}
	and its outer factors are isomorphisms.
	Hence it is an isomorphism if and only if $\cS$ is an isomorphism.

	The operator $\cS$ is the sum of $(\increment{\state}, \increment{\adjointstate}) \mapsto (\Kstate \increment{\adjointstate}, \Kstate \increment{\state})^\transp$, an isomorphism from $\statespace \times \statespace$ onto $\dualspace{\statespace} \times \dualspace{\statespace}$, and of $(\increment{\state}, \increment{\adjointstate}) \mapsto (\Mobs \, \increment{\state}, -\Ec \, W \restrictc \increment{\adjointstate})^\transp$, which is compact since the embedding $\statespace \hookrightarrow L^2(\Omega)$ is compact.
	By the Fredholm alternative, it is an isomorphism if it is injective.
	Let $(\increment{\state}, \increment{\adjointstate}) \in \statespace \times \statespace$ satisfy $\cS (\increment{\state}, \increment{\adjointstate}) = 0$.
	We test the first row of this equation with $\increment{\state}$ and the second row with $\increment{\adjointstate}$, and subtract.
	Since $\Kstate$ is symmetric, the two terms with $\Kstate$ cancel, and we obtain
	\begin{align*}
		\intOmegaobs \increment{\state}^2 \, \d x + \intOmegactrl W \, (\restrictc \increment{\adjointstate})^2 \, \d x
		&
		=
		\dual{\Mobs \, \increment{\state} + \Kstate \increment{\adjointstate}}{\increment{\state}}
		\\
		&
		\qquad
		-
		\dual{\Kstate \increment{\state} - \Ec \, W \restrictc \increment{\adjointstate}}{\increment{\adjointstate}}
		\\
		&
		=
		0
		.
	\end{align*}
	Since $W \ge 0$, both integrals vanish.
	Hence $\increment{\state} = 0$ \ale in $\Omegaobs$, so $\Mobs \, \increment{\state} = 0$, and the first row gives $\Kstate \increment{\adjointstate} = 0$, that is, $\increment{\adjointstate} = 0$.
	The second row then gives $\Kstate \increment{\state} = 0$, that is, $\increment{\state} = 0$.
	Thus $\cS$ is an isomorphism, and so is the operator of \eqref{eq:newton-system}.
\end{proof}

\subsection{The Newton Step as a Linear-Quadratic Control Problem}
\label{subsection:newton-qp}

\Cref{proposition:newton-qp} shows that the Newton step is a linear-quadratic control problem in which the control term carries the pointwise weight $1/W = \beta + 1/(\alpha_k D)$, which is close to $\beta$ where $\alpha_k D$ is large and tends to $+\infty$ as $D$ tends to $0$.
It arises from \eqref{eq:newton-system} by eliminating $\increment{\latentvariable}$ alone.

\begin{proposition}[Linear-quadratic form of the Newton step]
	\label{proposition:newton-qp}
	Let $W$ be as in \eqref{eq:newton-coefficient}, and let $(\increment{\state}, \increment{\control}, \increment{\adjointstate}, \increment{\latentvariable})$ be the solution of \eqref{eq:newton-system}.
	Then $(\current{\state} + \increment{\state}, \current{\control} + \increment{\control})$ is the unique solution and $\current{\adjointstate} + \increment{\adjointstate}$ the adjoint state of the problem
	\begin{equation}
		\label{eq:newton-qp}
		\begin{aligned}
			\text{Minimize}
			\quad
			&
			J_W(\state, \control)
			\coloneqq
			\frac{1}{2} \intOmegaobs \paren[normal](){\state - y_d}^2 \, \d x
			+
			\frac{1}{2} \intOmegactrl \frac{1}{W} \paren[normal](){\control - u_d}^2 \, \d x
			\\
			\text{\wrt\ }
			\quad
			&
			(\state, \control) \in \statespace \times \controlspace
			\quad
			\text{subject to}
			\quad
			\Kstate \state = \Ec \, \control
		\end{aligned}
	\end{equation}
	with the multiplier $\current{\lambda} \coloneqq (\latentvariable^{k-1} - \current{\latentvariable}) / \alpha_k$ of the current iterate and the control target
	\begin{equation}
		\label{eq:newton-qp-target}
		u_d
		\coloneqq
		(1 - \beta W) \, \conjugatederivative(\current{\latentvariable}) + W \current{\lambda}
		.
	\end{equation}
\end{proposition}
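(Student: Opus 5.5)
We verify that the Newton update satisfies the first-order optimality system of \eqref{eq:newton-qp}, and then use strict convexity of \eqref{eq:newton-qp} to identify it with the unique solution. Write $\state^+ \coloneqq \current{\state} + \increment{\state}$, $\control^+ \coloneqq \current{\control} + \increment{\control}$, $\adjointstate^+ \coloneqq \current{\adjointstate} + \increment{\adjointstate}$. First we record that \eqref{eq:newton-qp} is well posed. Since $\current{\latentvariable} \in L^\infty(\Omegactrl)$, the slack $D = \exp(\current{\latentvariable})$ is bounded above and below by positive constants. Hence $1/W = \beta + 1/(\alpha_k D) \in L^\infty(\Omegactrl)$ with $1/W \ge \beta$, and $u_d \in \controlspace$. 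Eliminating $\state = S\control$, the reduced objective is a continuous quadratic functional on $\controlspace$ that is $\beta$-strongly convex. It therefore has a unique minimizer, and this minimizer is characterized by the standard first-order system. That system consists of $\Kstate \state = \Ec \, \control$, the adjoint equation $\Kstate \adjointstate = \Eobs \, y_d - \Mobs \, \state$, and the gradient equation $\frac{1}{W}(\control - u_d) - \restrictc \, \adjointstate = 0$ \ale in $\Omegactrl$.

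Next we check that the first and third rows of \eqref{eq:newton-system} give the state and adjoint equations for $(\state^+, \control^+, \adjointstate^+)$. These rows are linear and their residuals $r_1, r_3$ are evaluated at the current iterate. Adding $\Mobs \, \increment{\state} + \Kstate \increment{\adjointstate} = -r_1$ to the definition of $r_1$ yields $\Mobs \, \state^+ + \Kstate \adjointstate^+ = \Eobs \, y_d$. Likewise, the third row yields $\Kstate \state^+ = \Ec \, \control^+$.

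The main computation concerns the pointwise rows two and four. Since $\Mctrl$ is injective, the fourth row reads $\increment{\control} - D \, \increment{\latentvariable} = -(\current{\control} - \conjugatederivative(\current{\latentvariable}))$. Using $\conjugatederivative(\current{\latentvariable}) = u_a + D$ from \eqref{eq:slack}, this gives $\increment{\latentvariable} = (\control^+ - u_a - D)/D$. The second row reads $\beta \control^+ - \restrictc \, \adjointstate^+ + \frac{1}{\alpha_k}(\current{\latentvariable} + \increment{\latentvariable} - \latentvariable^{k-1}) = 0$. Substituting $\increment{\latentvariable}$ and $\frac{1}{\alpha_k}(\current{\latentvariable} - \latentvariable^{k-1}) = -\current{\lambda}$ turns it into
\[
\Bigl(\beta + \frac{1}{\alpha_k D}\Bigr)\control^+ - \frac{u_a + D}{\alpha_k D} - \current{\lambda} = \restrictc \, \adjointstate^+ .
\]
With $1/W = \beta + 1/(\alpha_k D)$ and the identity $W/(\alpha_k D) = 1/(1 + \alpha_k \beta D) = 1 - \beta W$, this is exactly $\frac{1}{W}(\control^+ - u_d) = \restrictc \, \adjointstate^+$ with $u_d$ as in \eqref{eq:newton-qp-target}. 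This algebra is the only step that requires care; it is the step we expect to be the main obstacle, mainly in keeping the scalings by $\alpha_k$ and $\Mctrl$ consistent.

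Finally, the pair $(\state^+, \control^+)$ together with $\adjointstate^+$ satisfies the complete optimality system of \eqref{eq:newton-qp}. Sufficiency of this system for the convex problem gives that $(\state^+, \control^+)$ is the unique minimizer. The uniqueness of the adjoint state for the given $\state^+$, which holds because $\Kstate$ is invertible, shows that $\adjointstate^+$ is its adjoint state. Uniqueness of the Newton solution is \cref{lemma:newton-solvability}, so the statement is consistent for the unique Newton step.
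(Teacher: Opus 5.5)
Your proof is correct and follows the paper's argument: rows one and three give the state and adjoint equations, eliminating $\increment{\latentvariable}$ from the pointwise rows two and four gives the gradient condition $\frac{1}{W}(\control - u_d) = \restrictc \, \adjointstate$ for the updated iterate, and convexity then identifies the unique minimizer. The one difference is in the last step: you get existence and uniqueness from the $\beta$-strong convexity of the reduced functional, using $1/W \in L^\infty(\Omegactrl)$ from the standing assumption $\current{\latentvariable} \in L^\infty(\Omegactrl)$. The paper instead expands $J_W$ around the candidate to show $J_W(\statetest, \controltest) - J_W(\state, \control) \ge \frac{\beta}{2} \norm{\controltest - \control}_{\controlspace}^2$ directly, which needs only $1/W \ge \beta$.
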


\begin{proof}
	Let $\state \coloneqq \current{\state} + \increment{\state}$, $\control \coloneqq \current{\control} + \increment{\control}$ and $\adjointstate \coloneqq \current{\adjointstate} + \increment{\adjointstate}$.
	The third and the first row of \eqref{eq:newton-system} and the definitions \eqref{eq:newton-residuals} of $r_3$ and $r_1$ give
	\begin{equation*}
		\Kstate \state
		=
		\Kstate \current{\state} + \Ec \, \increment{\control} - r_3
		=
		\Ec \, \control
		,
		\quad
		\quad
		\Mobs \, \state + \Kstate \adjointstate
		=
		\Mobs \, \current{\state} + \Kstate \current{\adjointstate} - r_1
		=
		\Eobs \, y_d
		,
	\end{equation*}
	so $(\state, \control)$ is feasible for \eqref{eq:newton-qp} and $\adjointstate$ is the adjoint state of $\control$.
	Since $\dualmap{\Ec} = \Mctrl \, \restrictc$, $\MDctrl \latenttest = \Mctrl (D \, \latenttest)$ and $\Mctrl$ is injective, the second and the fourth row of \eqref{eq:newton-system} are equivalent to
	\begin{equation*}
		\increment{\latentvariable}
		=
		\alpha_k \paren[big](){\restrictc \increment{\adjointstate} - \beta \, \increment{\control} - r_2}
		,
		\quad
		\quad
		\increment{\control} - D \, \increment{\latentvariable}
		=
		-\alpha_k r_4
		\quad
		\text{in } \controlspace
		.
	\end{equation*}
	Eliminating $\increment{\latentvariable}$ gives $(1 + \alpha_k \beta D) \, \increment{\control} = \alpha_k D \, (\restrictc \increment{\adjointstate} - r_2) - \alpha_k r_4$, that is, $\increment{\control} = W (\restrictc \increment{\adjointstate} - r_2) - \alpha_k (1 - \beta W) \, r_4$ by \eqref{eq:newton-coefficient} and $1 / (1 + \alpha_k \beta D) = 1 - \beta W$.
	Hence
	\begin{align*}
		\control - W \restrictc \, \adjointstate
		&
		=
		\current{\control} - W \restrictc \, \current{\adjointstate} - W r_2 - \alpha_k (1 - \beta W) \, r_4
		\\
		&
		=
		\current{\control} - W \restrictc \, \current{\adjointstate} - W \paren[big](){\beta \current{\control} - \restrictc \, \current{\adjointstate} - \current{\lambda}} - (1 - \beta W) \paren[big](){\current{\control} - \conjugatederivative(\current{\latentvariable})}
		=
		u_d
		,
	\end{align*}
	where the second equality uses $r_2 = \beta \current{\control} - \restrictc \, \current{\adjointstate} - \current{\lambda}$ and $\alpha_k r_4 = \current{\control} - \conjugatederivative(\current{\latentvariable})$.
	Since $\frac{1}{W} \paren[normal](){\control - u_d}^2 = W (\restrictc \, \adjointstate)^2$, $J_W(\state, \control)$ is finite.
	Now let $(\statetest, \controltest) \in \statespace \times \controlspace$ be feasible for \eqref{eq:newton-qp} with $J_W(\statetest, \controltest) < \infty$.
	Then
	\begin{align*}
		\intOmegaobs (\state - y_d) (\statetest - \state) \, \d x
		&
		=
		\dual{\Mobs \, \state - \Eobs \, y_d}{\statetest - \state}
		=
		-\dual{\Kstate (\statetest - \state)}{\adjointstate}
		\\
		&
		=
		-\intOmegactrl (\controltest - \control) \, \restrictc \, \adjointstate \, \d x
		=
		-\intOmegactrl \frac{1}{W} (\control - u_d) (\controltest - \control) \, \d x
		,
	\end{align*}
	where the second equality is the adjoint equation, the third is $\Kstate (\statetest - \state) = \Ec (\controltest - \control)$, and the last is $\control - u_d = W \restrictc \, \adjointstate$.
	Therefore,
	\begin{align*}
		J_W(\statetest, \controltest) - J_W(\state, \control)
		&
		=
		\intOmegaobs (\state - y_d) (\statetest - \state) \, \d x + \intOmegactrl \frac{1}{W} (\control - u_d) (\controltest - \control) \, \d x
		\\
		&
		\quad
		+ \frac{1}{2} \intOmegaobs (\statetest - \state)^2 \, \d x + \frac{1}{2} \intOmegactrl \frac{1}{W} \paren(){\controltest - \control}^2 \, \d x
		\\
		&
		=
		\frac{1}{2} \intOmegaobs (\statetest - \state)^2 \, \d x + \frac{1}{2} \intOmegactrl \frac{1}{W} \paren(){\controltest - \control}^2 \, \d x
		\ge
		\frac{\beta}{2} \, \norm{\controltest - \control}_{\controlspace}^2
		,
	\end{align*}
	where the second equality holds since the two cross terms cancel by the previous display, and the inequality is $1/W \ge \beta$.
	Hence $(\state, \control)$ is the unique solution of \eqref{eq:newton-qp}.
\end{proof}

Problem \eqref{eq:newton-qp} is a tracking-type problem with the state target $y_d$ of \eqref{eq:problem} and the control target $u_d$.
The target is the convex combination of the primal value $\conjugatederivative(\current{\latentvariable})$ of the current latent variable and of $\current{\lambda} / \beta$ with the weights $1 - \beta W$ and $\beta W$.
Where $W$ is small, the large weight $1/W$ keeps the control close to $u_d$, which is then close to $\conjugatederivative(\current{\latentvariable}) = u_a + D$.
Where $\beta W$ is close to $1$, the control term is close to $\frac{\beta}{2} \paren(){\control - \current{\lambda} / \beta}^2$.

\begin{figure}[htbp]
	\centering
\begin{tikzpicture}
	\begin{axis}[
			scale only axis, width=0.27\textwidth, height=0.27\textwidth,
			xmin=-0.15, xmax=3, ymin=-1.25, ymax=3,
			title={$\latentvariable^{k-1} = -2$},
			ylabel={multiplier $\lambda$},
			legend style={at={(1.630,-0.36)}, anchor=north, legend columns=-1, /tikz/every even column/.append style={column sep=0.8em}},
		]
		\addplot[black, line width=1.6pt, forget plot] coordinates {(0,3) (0,0) (3,0)};
		\addplot[figalphasmall, thick, domain=-1.2500:3.0000, samples=300] ({exp(-2-x)}, {x});
		\addlegendentry{$\alpha_k = 1$}
		\addplot[figalphamiddle, thick, domain=-0.7747:3.0000, samples=400] ({exp(-2-4*x)}, {x});
		\addlegendentry{$\alpha_k = 4$}
		\addplot[figalphalarge, thick, domain=-0.1937:1.1250, samples=400] ({exp(-2-16*x)}, {x});
		\addlegendentry{$\alpha_k = 16$}
		\addplot[black, only marks, mark=*, mark size=1.6pt, mark options={fill=white}, forget plot] coordinates {(0.1353,0)};
		\addlegendimage{black, line width=1.6pt}
		\addlegendentry{limit $\alpha_k \rightarrow \infty$}
	\end{axis}
	\begin{axis}[
			at={(0.305\textwidth,0)},
			scale only axis, width=0.27\textwidth, height=0.27\textwidth,
			xmin=-0.15, xmax=3, ymin=-1.25, ymax=3,
			title={$\latentvariable^{k-1} = 0$},
			yticklabels={},
			xlabel={slack $s = \control - u_a$},
		]
		\addplot[black, line width=1.6pt, forget plot] coordinates {(0,3) (0,0) (3,0)};
		\addplot[figalphasmall, thick, domain=-1.0986:3.0000, samples=300, forget plot] ({exp(-x)}, {x});
		\addplot[figalphamiddle, thick, domain=-0.2747:3.0000, samples=400, forget plot] ({exp(-4*x)}, {x});
		\addplot[figalphalarge, thick, domain=-0.0687:1.2500, samples=400, forget plot] ({exp(-16*x)}, {x});
		\addplot[black, only marks, mark=*, mark size=1.6pt, mark options={fill=white}, forget plot] coordinates {(1.0000,0)};
	\end{axis}
	\begin{axis}[
			at={(0.610\textwidth,0)},
			scale only axis, width=0.27\textwidth, height=0.27\textwidth,
			xmin=-0.15, xmax=3, ymin=-1.25, ymax=3,
			title={$\latentvariable^{k-1} = 1$},
			yticklabels={},
		]
		\addplot[black, line width=1.6pt, forget plot] coordinates {(0,3) (0,0) (3,0)};
		\addplot[figalphasmall, thick, domain=-0.0986:3.0000, samples=300, forget plot] ({exp(1-x)}, {x});
		\addplot[figalphamiddle, thick, domain=-0.0247:3.0000, samples=400, forget plot] ({exp(1-4*x)}, {x});
		\addplot[figalphalarge, thick, domain=-0.0062:1.3125, samples=400, forget plot] ({exp(1-16*x)}, {x});
		\addplot[black, only marks, mark=*, mark size=1.6pt, mark options={fill=white}, forget plot] coordinates {(2.7183,0)};
	\end{axis}
\end{tikzpicture}
	\caption{The complementarity set $\set{s \ge 0, \lambda = 0} \cup \set{s = 0, \lambda \ge 0}$ and the curves $s = \exp(\latentvariable^{k-1}) \exp(-\alpha_k \lambda)$ of \eqref{eq:smooth-complementarity}. The circle marks $s = \exp(\latentvariable^{k-1})$, where all curves of a panel cross the axis $\lambda = 0$.}
	\label{figure:complementarity}
\end{figure}

\begin{figure}[htbp]
	\centering
\begin{tikzpicture}
	\begin{axis}[
			scale only axis, width=0.46\textwidth, height=0.28\textwidth,
			xmin=-0.15, xmax=3, ymin=-0.08, ymax=1.12,
			xlabel={slack $D$}, ylabel={weight $\beta W$},
			legend pos=south east,
		]
		\addplot[figalphasmall, thick, domain=0:3, samples=200] {x / (1 + x)};
		\addlegendentry{$\alpha_k = 1$}
		\addplot[figalphamiddle, thick, domain=0:3, samples=200] {4 * x / (1 + 4 * x)};
		\addlegendentry{$\alpha_k = 4$}
		\addplot[figalphalarge, thick, domain=0:3, samples=300] {16 * x / (1 + 16 * x)};
		\addlegendentry{$\alpha_k = 16$}
		\addplot[black, line width=1.6pt] coordinates {(0,1) (3,1)};
		\addlegendentry{limit $\alpha_k \rightarrow \infty$}
		\addplot[black, only marks, mark=*, mark size=1.8pt, forget plot] coordinates {(0,0)};
		\addplot[black, only marks, mark=*, mark size=1.8pt, mark options={fill=white}, forget plot] coordinates {(0,1)};
	\end{axis}
\end{tikzpicture}
	\caption{The weight $\beta W = \alpha_k \beta D / (1 + \alpha_k \beta D)$ of \eqref{eq:pdas-dictionary} as a function of the slack $D$ for $\beta = 1$. As $\alpha_k \to \infty$, it approaches the indicator of $\set{D > 0}$.}
	\label{figure:soft-active-set}
\end{figure}

\begin{remark}[Relation to the primal-dual active set method]
	\label{remark:pdas}
	The primal-dual active set method of \cite{BergouniouxItoKunisch:1999:1,HintermuellerItoKunisch:2002:1} is the semismooth Newton method for the optimality system \eqref{eq:kkt}.
	It expresses the complementarity condition $\lambda \ge 0$, $\control \ge u_a$, $\lambda \, (\control - u_a) = 0$ by the nonsmooth equation $\lambda - \max \set{0, \lambda + c \, (u_a - \control)} = 0$ with a constant $c > 0$.
	Newton's method of this section is applied to the optimality system of the LVPP subproblem instead.
	Its multiplier form \eqref{eq:optimality-form-multiplier} replaces the complementarity condition by the smooth equation
	\begin{equation}
		\label{eq:smooth-complementarity}
		\control - u_a
		=
		\exp(\latentvariable^{k-1}) \exp(- \alpha_k \lambda)
		,
	\end{equation}
	and \eqref{eq:newton-qp} is its Newton step.
	For fixed $x \in \Omegactrl$, the solutions $(s, \lambda)$ of \eqref{eq:smooth-complementarity} with the slack $s = \control - u_a$ form the curve $s = \exp(\latentvariable^{k-1}) \exp(-\alpha_k \lambda)$.
	As $\alpha_k \to \infty$, this curve approaches the L-shaped complementarity set $\set{s \ge 0, \lambda = 0} \cup \set{s = 0, \lambda \ge 0}$.
	\Cref{figure:complementarity} shows these curves for three values of $\latentvariable^{k-1}$ and of $\alpha_k$.

	The semismooth Newton step at $(\current{\control}, \current{\lambda})$ chooses the active set $\sA \coloneqq \setDef{x \in \Omegactrl}{\current{\lambda}(x) + c \, (u_a(x) - \current{\control}(x)) > 0}$ and computes the next iterate from
	\begin{equation}
		\label{eq:pdas-qp}
		\begin{aligned}
			\text{Minimize}
			\quad
			&
			\frac{1}{2} \intOmegaobs \paren[normal](){\state - y_d}^2 \, \d x
			+
			\frac{\beta}{2} \intOmegactrl \control^2 \, \d x
			\\
			\text{\wrt\ }
			\quad
			&
			(\state, \control) \in \statespace \times \controlspace
			\quad
			\text{subject to}
			\quad
			\Kstate \state = \Ec \, \control
			,
			\quad
			\control = u_a \text{ on } \sA
			.
		\end{aligned}
	\end{equation}
	This is \eqref{eq:newton-qp} with $W = \chi_{\sI} / \beta$ and $u_d = \chi_{\sA} \, u_a$, where $\sI \coloneqq \Omegactrl \setminus \sA$ and $\chi$ denotes the indicator function.
	On $\sA$, where $W = 0$, the conventions $1/W \coloneqq +\infty$ and $0 \cdot \infty \coloneqq 0$ make the control term finite only for $\control = u_a$.
	Conversely, since $\conjugatederivative(\current{\latentvariable}) = u_a + D$, the Newton step \eqref{eq:newton-qp} is \eqref{eq:pdas-qp} with
	\begin{equation}
		\label{eq:pdas-dictionary}
		\begin{aligned}
			\chi_{\sI}
			\quad
			&
			\text{replaced by}
			\quad
			\beta W
			=
			\varsigma\paren[big](){\ln(\alpha_k \beta D)}
			,
			\\
			\chi_{\sA} \, u_a
			\quad
			&
			\text{replaced by}
			\quad
			u_d
			=
			(1 - \beta W) (u_a + D) + W \current{\lambda}
			,
		\end{aligned}
	\end{equation}
	where $\varsigma(t) \coloneqq \exp(t) / (1 + \exp(t))$ is the logistic function.
	The Newton step of LVPP is thus an active set step with a soft active set.
	The weight $\beta W$ is close to $1$ where the slack $D$ is large compared with $1 / (\alpha_k \beta)$ and close to $0$ where it is small, with a smooth transition in $\ln D$ around $-\ln(\alpha_k \beta)$.
	\Cref{figure:soft-active-set} shows this weight as a function of the slack~$D$.
	The multiplier $\current{\lambda}$ enters the target $u_d$ through the linearization of \eqref{eq:smooth-complementarity}, which couples the slack $\control - u_a$ to the multiplier, whereas the active set method decouples them by $\control = u_a$ on $\sA$ and $\lambda = 0$ on $\sI$.

	The latent variable $\current{\latentvariable} = \ln D$ is the logarithm of the slack, and it carries the soft active set from one outer iteration to the next.
	At the warm start of \cref{theorem:newton-local-convergence}, $\current{\latentvariable} = \latentvariable^{k-1}$, so that $D = \control^{k-1} - u_a$, $\current{\lambda} = 0$ and $u_d = (1 - \beta W) \, \control^{k-1}$.
	If $\alpha_k \to \infty$ and $(\ln \alpha_k) / \sigma_{k-1} \to 0$ as $k \to \infty$, for instance for geometrically growing $\alpha_k$, this soft active set sharpens to the active set $\sA^* \coloneqq \set{\lambda^* > 0}$ of the solution.
	More precisely, $\beta W$ converges in measure to $1$ on the inactive set $\set{\control^* > u_a}$ and to $0$ on $\sA^*$, and $u_d$ converges in measure to $\chi_{\sA^*} u_a$ on both sets.
	On $\set{\control^* > u_a}$, this follows from $1 - \beta W = 1 / (1 + \alpha_k \beta D)$, $\alpha_k \to \infty$ and $\control^{k-1} \to \control^*$ in $\controlspace$ by \cref{theorem:lvpp-convergence}~\ref{item:lvpp-convergence-rate}.
	On $\sA^*$, it follows from $\beta W \le \alpha_k \beta D = \beta \exp(\ln \alpha_k + \latentvariable^{k-1})$, from $\latentvariable^{k-1} / \sigma_{k-1} \to -\lambda^*$ in $\controlspace$ by \cref{theorem:lvpp-convergence}~\ref{item:lvpp-convergence-divergence}, and from $\control^* = u_a$ there.
	The weakly active set $\set{\lambda^* = 0, \control^* = u_a}$ is not covered.
\end{remark}

\subsection{Local Quadratic Convergence}
\label{subsection:newton-local-convergence}

This subsection proves that the Newton iteration \eqref{eq:newton-system} converges locally and quadratically in $L^\infty(\Omegactrl)$ when it starts from the previous outer iterate and $\alpha_k \, \norm{\lambda^{k-1}}_{L^\infty(\Omegactrl)}$ lies below a constant that depends neither on $\alpha_k$ nor on the iterate nor on $k$.
The exponential is continuously Fréchet differentiable on $L^\infty(\Omegactrl)$, see \cref{subsection:newton-system}.

The convergence proof works with the map
\begin{equation}
	\label{eq:two-field-map}
	F_k(\control, \latentvariable)
	\coloneqq
	\paren[big](){\alpha_k \lambda(\control) + \latentvariable - \latentvariable^{k-1}, \; \control - \conjugatederivative(\latentvariable)}
\end{equation}
on $X \coloneqq L^\infty(\Omegactrl) \times L^\infty(\Omegactrl)$ with $\lambda(\control)$ from \eqref{eq:multiplier-map} and the norm
\begin{equation}
	\label{eq:latent-norm}
	\norm{(\control, \latentvariable)}_X
	\coloneqq
	\max \set{\norm{\control}_{L^\infty(\Omegactrl)}, \norm{\latentvariable}_{L^\infty(\Omegactrl)}}
	.
\end{equation}
Let $\latentvariable^{k-1} \in L^\infty(\Omegactrl)$.
By the $L^\infty$-regularity of \cref{subsection:problem-setting}, $\lambda$ maps $L^\infty(\Omegactrl)$ into itself, and its derivative $\lambda'$ is bounded on $L^\infty(\Omegactrl)$ with norm at most $\beta \, \solvabilityconstant$, where
\begin{equation}
	\label{eq:uniform-solvability-constant}
	\solvabilityconstant
	\coloneqq
	1 + \beta^{-1} \regularityconstant \poincare \, \norm{S} \, \abs{\Omegactrl}^{1/2}
	,
\end{equation}
since
\begin{align*}
	\norm{(\lambda' - \beta \id) \controltest}_{L^\infty(\Omegactrl)}
	&
	=
	\norm{\restrictc \Kstate^{-1} \Mobs \, S \controltest}_{L^\infty(\Omegactrl)}
	\le
	\regularityconstant \, \norm{S \controltest}_{L^2(\Omegaobs)}
	\\
	&
	\le
	\regularityconstant \poincare \, \norm{S} \, \abs{\Omegactrl}^{1/2} \, \norm{\controltest}_{L^\infty(\Omegactrl)}
	.
\end{align*}
Hence $F_k$ maps $X$ into $X$, and it is continuously Fréchet differentiable with
\begin{equation}
	\label{eq:two-field-derivative}
	F_k'(\control, \latentvariable) (\controltest, \latenttest)
	=
	\paren[big](){\alpha_k \lambda' \controltest + \latenttest, \; \controltest - D \, \latenttest}
	,
	\quad
	\quad
	D
	=
	\conjugatesecondderivative(\latentvariable)
	=
	\exp(\latentvariable)
	,
\end{equation}
by \cref{subsection:newton-system}.
The zeros of $F_k$ are the solutions of \eqref{eq:lvpp-system-control}, since $\Mctrl$ is injective, so $(\control^k, \latentvariable^k)$ is the unique zero of $F_k$ by \cref{proposition:subproblem-wellposedness}, and it lies in $X$ by \cref{proposition:latent-linfty-bounds}.
The proof has two steps.
\Cref{proposition:uniform-solvability} shows that the derivatives of $F_k$ are invertible on $X$ with bounds in terms of $\solvabilityconstant$ that are uniform in $\alpha_k$ and in $D$. This gives the constants.
\Cref{lemma:newton-two-field} shows that Newton's method for the scaled form produces the same iterates as Newton's method for $F_k$.
To $F_k$ we apply the affine covariant Newton--Mysovskikh theorem (\cite[Thm.~1.1]{WeiserSchielaDeuflhard:2005:1}, see also \cite[Thm.~2.2]{Deuflhard:2011:1} in finite dimensions).

\begin{proposition}[Uniform solvability]
	\label{proposition:uniform-solvability}
	Let $\latentvariable^{k-1} \in L^\infty(\Omegactrl)$, $(\control, \latentvariable) \in X$, and $D \coloneqq \conjugatesecondderivative(\latentvariable)$.
	Then $F_k'(\control, \latentvariable)$ is boundedly invertible on $X$, and for every $(\rho_1, \rho_2) \in X$ the solution $(\controltest, \latenttest)$ of $F_k'(\control, \latentvariable) (\controltest, \latenttest) = (\rho_1, \rho_2)$ satisfies
	\begin{equation}
		\label{eq:uniform-solvability-bounds}
		\begin{aligned}
			\norm{\controltest}_{L^\infty(\Omegactrl)}
			&
			\le
			\solvabilityconstant \, \norm[Big]{\frac{\rho_2 + D \, \rho_1}{1 + \alpha_k \beta D}}_{L^\infty(\Omegactrl)}
			,
			\\
			\norm{\latenttest}_{L^\infty(\Omegactrl)}
			&
			\le
			\norm{\rho_1}_{L^\infty(\Omegactrl)} + \alpha_k \beta \, \solvabilityconstant \, \norm{\controltest}_{L^\infty(\Omegactrl)}
			.
		\end{aligned}
	\end{equation}
\end{proposition}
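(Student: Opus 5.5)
The plan is to solve the linearized system $F_k'(\control,\latentvariable)(\controltest,\latenttest)=(\rho_1,\rho_2)$ by eliminating $\latenttest$ pointwise, which leaves a single equation for $\controltest$ involving the coefficient $W$ of \eqref{eq:newton-coefficient}. Solvability then comes from Lax--Milgram in $\controlspace$, and the $L^\infty$ bound from an energy identity in which the large weight $1/W$ cancels.

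\textbf{Reduction to one equation.} By \eqref{eq:two-field-derivative}, the system reads $\alpha_k \lambda'\controltest + \latenttest = \rho_1$ and $\controltest - D\latenttest = \rho_2$. The first equation gives $\latenttest = \rho_1 - \alpha_k\lambda'\controltest$. Substituting into the second equation gives
\[
(1+\alpha_k\beta D)\,\controltest + \alpha_k D\, T\controltest = \rho_2 + D\rho_1 ,
\]
where $T \coloneqq \lambda' - \beta\id = \restrictc\Kstate^{-1}\Mobs S$. Dividing by $1+\alpha_k\beta D$, this is equivalent to
\[
\controltest + W T\controltest = g, \qquad g \coloneqq \frac{\rho_2 + D\rho_1}{1+\alpha_k\beta D}.
\]
Since $\latenttest \in L^\infty(\Omegactrl)$, we have $D=\exp(\latenttest)$ with $0<\essinf D\le D\le\esssup D<\infty$. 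Hence $W^{-1}=\beta+1/(\alpha_k D)\in L^\infty(\Omegactrl)$ with $W^{-1}\ge\beta$, and $g\in L^\infty(\Omegactrl)$.

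\textbf{Existence.} The equation is equivalent to $(W^{-1}+T)\controltest = W^{-1}g$. Since $T = \Mctrl^{-1}\dualmap{S}\Mobs S$, we have $\inner{T\controltest}{\controltest}_{\controlspace} = \norm{S\controltest}_{L^2(\Omegaobs)}^2 \ge 0$. So the bilinear form $\inner{(W^{-1}+T)\cdot}{\cdot}_{\controlspace}$ is bounded and $\beta$-coercive on $\controlspace$, and Lax--Milgram yields a unique $\controltest\in\controlspace$. Because $T$ maps $\controlspace$ into $L^\infty(\Omegactrl)$ by the $L^\infty$-regularity, the identity $\controltest = g - WT\controltest$ shows $\controltest\in L^\infty(\Omegactrl)$. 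Then $\latenttest = \rho_1-\alpha_k\lambda'\controltest\in L^\infty(\Omegactrl)$. Uniqueness in $X$ follows from uniqueness in $\controlspace$, so the bounds below yield bounded invertibility on $X$.

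\textbf{Uniform bound; this is the key step.} The naive weighted estimate fails because $W^{-1}$ may be arbitrarily large, so the bound must avoid any norm weighted by $W^{-1}$ on the data $g$. The plan is as follows.
\begin{enumerate}
\item Set $w\coloneqq\controltest-g=-WT\controltest$, so that $W^{-1}w = -T\controltest$.
\item Test $W^{-1}w = -T\controltest$ with $\controltest = w+g$. This gives
\[
\inner{W^{-1}w}{w}_{\controlspace} + \norm{S\controltest}_{L^2(\Omegaobs)}^2 = -\inner{W^{-1}w}{g}_{\controlspace} = \inner{T\controltest}{g}_{\controlspace} = \intOmegaobs S\controltest\, Sg\,\d x .
\]
\item The first term on the left is nonnegative, so $\norm{S\controltest}_{L^2(\Omegaobs)} \le \norm{Sg}_{L^2(\Omegaobs)}$.
\item Bound the data term: $\norm{Sg}_{L^2(\Omegaobs)} \le \poincare\norm{S}\,\norm{g}_{\controlspace} \le \poincare\norm{S}\abs{\Omegactrl}^{1/2}\norm{g}_{L^\infty(\Omegactrl)}$.
\item Use $W\le\beta^{-1}$ and $\norm{T\controltest}_{L^\infty(\Omegactrl)}\le\regularityconstant\norm{S\controltest}_{L^2(\Omegaobs)}$ to get
\[
\norm{\controltest}_{L^\infty(\Omegactrl)} \le \norm{g}_{L^\infty(\Omegactrl)} + \beta^{-1}\regularityconstant\poincare\norm{S}\abs{\Omegactrl}^{1/2}\norm{g}_{L^\infty(\Omegactrl)} = \solvabilityconstant\,\norm{g}_{L^\infty(\Omegactrl)} .
\]
This is the first bound in \eqref{eq:uniform-solvability-bounds}.
\end{enumerate}

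\textbf{Bound for $\latenttest$.} From $\latenttest=\rho_1-\alpha_k\lambda'\controltest$ and $\norm{\lambda'}_{\cL(L^\infty(\Omegactrl))}\le\beta\solvabilityconstant$, established before \eqref{eq:uniform-solvability-constant}, we get $\norm{\latenttest}_{L^\infty(\Omegactrl)}\le\norm{\rho_1}_{L^\infty(\Omegactrl)}+\alpha_k\beta\solvabilityconstant\norm{\controltest}_{L^\infty(\Omegactrl)}$. This is the second bound in \eqref{eq:uniform-solvability-bounds}.
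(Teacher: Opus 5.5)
Your proof is correct, and its skeleton matches the paper's. The elimination of $\latenttest$ leading to $\controltest + WT\controltest = g$ is the paper's Step~4, and your final $L^\infty$ estimate is its Step~3. Your key inequality $\norm{S\controltest}_{L^2(\Omegaobs)} \le \norm{Sg}_{L^2(\Omegaobs)}$ is also the paper's in disguise. With $h \coloneqq (S\controltest)|_{\Omegaobs}$, the paper derives $h + Qh = (Sg)|_{\Omegaobs}$, where $Qh = (\Kstate^{-1}\Ec W\restrictc\Kstate^{-1}\Eobs h)|_{\Omegaobs}$ is self-adjoint and nonnegative on $L^2(\Omegaobs)$. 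Since $\langle Qh, h\rangle = \int_{\Omegactrl} W (T\controltest)^2\,\mathrm{d}x$, testing that equation with $h$ gives exactly your energy identity.

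The genuine difference is how existence is obtained. The paper inverts $\id + Q$ on $L^2(\Omegaobs)$, whose inverse has norm at most $1$, and then reconstructs $\controltest = g - W\restrictc\Kstate^{-1}\Eobs h$. This yields existence, uniqueness and the bound in one stroke, and it needs only $0 \le W \le 1/\beta$. You instead multiply by $W^{-1}$ and apply Lax--Milgram to the $\beta$-coercive form of $W^{-1} + T$ on $\controlspace$. That requires $W^{-1} \in L^\infty(\Omegactrl)$, i.e.\ $\essinf D > 0$. This holds here and does not enter the constants, so nothing is lost, and your route is the more familiar one posed in the control space. The paper's route, however, also covers weights that vanish on sets of positive measure, such as the active-set coefficient of \cref{remark:pdas}. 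Your energy identity itself needs no division by $W$: simply test $\controltest + WT\controltest = g$ with $T\controltest$.

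One slip: the lower bound on $D$ comes from $D = \exp(\latentvariable)$ with $\latentvariable \in L^\infty(\Omegactrl)$, the point of linearization. It does not come from the unknown $\latenttest$.
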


\begin{proof}
	Let $W$ be as in \eqref{eq:newton-coefficient}, so that $0 \le W < 1/\beta$.
	We first show that for every $b \in L^\infty(\Omegactrl)$ the equation
	\begin{equation}
		\label{eq:uniform-solvability-reduced}
		\paren[big](){\id + W (\lambda' - \beta \id)} \controltest
		=
		b
	\end{equation}
	has a unique solution $\controltest \in L^\infty(\Omegactrl)$, and that $\norm{\controltest}_{L^\infty(\Omegactrl)} \le \solvabilityconstant \, \norm{b}_{L^\infty(\Omegactrl)}$.

	\emph{Step~1: Reduction to the observation domain.}
	We solve \eqref{eq:uniform-solvability-reduced} through the observation domain, where the operator involved is self-adjoint.
	Let $Q \in \cL(L^2(\Omegaobs))$ be defined by
	\begin{equation*}
		Q g
		\coloneqq
		\restr{(\Kstate^{-1} \Ec \, W \restrictc \Kstate^{-1} \Eobs \, g)}{\Omegaobs}
		.
	\end{equation*}
	For $g_1, g_2 \in L^2(\Omegaobs)$ with $y_1 \coloneqq \Kstate^{-1} \Eobs \, g_1$ and $y_2 \coloneqq \Kstate^{-1} \Eobs \, g_2$,
	\begin{align*}
		\intOmegaobs (Q g_1) \, g_2 \, \d x
		&
		=
		\dual{\Eobs \, g_2}{\Kstate^{-1} \Ec \, W \restrictc y_1}
		=
		\dual{\Kstate y_2}{\Kstate^{-1} \Ec \, W \restrictc y_1}
		\\
		&
		=
		\dual{\Ec \, W \restrictc y_1}{y_2}
		=
		\intOmegactrl W \, (\restrictc y_1) (\restrictc y_2) \, \d x
		.
	\end{align*}
	Hence $Q$ is self-adjoint and nonnegative, and $\id + Q$ is invertible on $L^2(\Omegaobs)$ with an inverse of norm at most $1$.

	\emph{Step~2: Existence and uniqueness.}
	Since $\lambda' - \beta \id = \restrictc \Kstate^{-1} \Mobs \, S$ and $\Mobs \, S \controltest = \Eobs \paren[big](){\restr{(S \controltest)}{\Omegaobs}}$, a function $\controltest \in L^\infty(\Omegactrl)$ solves \eqref{eq:uniform-solvability-reduced} if and only if
	\begin{equation*}
		\controltest
		=
		b - W \restrictc \Kstate^{-1} \Eobs \, g
		\quad
		\text{with}
		\quad
		g
		\coloneqq
		\restr{(S \controltest)}{\Omegaobs}
		.
	\end{equation*}
	Applying $S$ to this identity and restricting to $\Omegaobs$ gives $g + Q g = \restr{(S b)}{\Omegaobs}$.
	Hence $g = (\id + Q)^{-1} \paren[big](){\restr{(S b)}{\Omegaobs}}$ is determined by $b$, and so is $\controltest$.
	Conversely, with this $g$ the function $\controltest \coloneqq b - W \restrictc \Kstate^{-1} \Eobs \, g$ satisfies $\restr{(S \controltest)}{\Omegaobs} = \restr{(S b)}{\Omegaobs} - Q g = g$, so it solves \eqref{eq:uniform-solvability-reduced}, and it lies in $L^\infty(\Omegactrl)$ by the $L^\infty$-regularity of \cref{subsection:problem-setting}.

	\emph{Step~3: The bound.}
	With this $g$,
	\begin{align*}
		\norm{\controltest}_{L^\infty(\Omegactrl)}
		&
		\le
		\norm{b}_{L^\infty(\Omegactrl)} + \beta^{-1} \regularityconstant \, \norm{g}_{L^2(\Omegaobs)}
		\\
		&
		\le
		\norm{b}_{L^\infty(\Omegactrl)} + \beta^{-1} \regularityconstant \poincare \, \norm{S} \, \abs{\Omegactrl}^{1/2} \, \norm{b}_{L^\infty(\Omegactrl)}
		\\
		&
		=
		\solvabilityconstant \, \norm{b}_{L^\infty(\Omegactrl)}
		,
	\end{align*}
	where the first inequality is $W \le 1/\beta$ and the $L^\infty$-regularity, and the second is $\norm{g}_{L^2(\Omegaobs)} \le \norm{S b}_{L^2(\Omegaobs)} \le \poincare \, \norm{S} \, \abs{\Omegactrl}^{1/2} \, \norm{b}_{L^\infty(\Omegactrl)}$.

	\emph{Step~4: The derivative of $F_k$.}
	Let $(\rho_1, \rho_2) \in X$ and $(\controltest, \latenttest) \in X$.
	By \eqref{eq:two-field-derivative}, $F_k'(\control, \latentvariable) (\controltest, \latenttest) = (\rho_1, \rho_2)$ holds if and only if
	\begin{equation*}
		\latenttest
		=
		\rho_1 - \alpha_k \lambda' \controltest
		,
		\quad
		\quad
		(\id + \alpha_k D \, \lambda') \controltest
		=
		\rho_2 + D \, \rho_1
		,
	\end{equation*}
	that is, if and only if
	\begin{equation*}
		\paren[big](){\id + W (\lambda' - \beta \id)} \controltest
		=
		\frac{\rho_2 + D \, \rho_1}{1 + \alpha_k \beta D}
		,
		\quad
		\quad
		\latenttest
		=
		\rho_1 - \alpha_k \lambda' \controltest
		,
	\end{equation*}
	since $\id + \alpha_k D \, \lambda' = (1 + \alpha_k \beta D) \paren[big](){\id + W (\lambda' - \beta \id)}$ by $(1 + \alpha_k \beta D) \, W = \alpha_k D$.
	The first equation has a unique solution $\controltest \in L^\infty(\Omegactrl)$, which satisfies the first bound of \eqref{eq:uniform-solvability-bounds}.
	The second equation determines $\latenttest \in L^\infty(\Omegactrl)$ with the second bound of \eqref{eq:uniform-solvability-bounds}, since $\lambda'$ is bounded on $L^\infty(\Omegactrl)$ by $\beta \, \solvabilityconstant$.
	Hence $F_k'(\control, \latentvariable)$ is boundedly invertible on $X$.
\end{proof}

\begin{lemma}[Newton's method in the control and the latent variable]
	\label{lemma:newton-two-field}
	Let $\latentvariable^{k-1} \in L^\infty(\Omegactrl)$, and consider Newton's method for the scaled form \eqref{eq:optimality-form-scaled}, started at $(\current{\state}, \current{\control}, \current{\adjointstate}, \current{\latentvariable})$ with $\current{\state} = S \current{\control}$, with $\current{\adjointstate}$ the adjoint state of $\current{\control}$, and with $\current{\control}, \current{\latentvariable} \in L^\infty(\Omegactrl)$.
	Then every Newton iterate satisfies these relations, and its components $(\control, \latentvariable)$ are the iterates of Newton's method for $F_k$ on $X$.
\end{lemma}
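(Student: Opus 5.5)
We argue by induction over the Newton iterations; it suffices to treat one step. Let $(\current{\state}, \current{\control}, \current{\adjointstate}, \current{\latentvariable})$ satisfy $\current{\state} = S \current{\control}$, let $\current{\adjointstate}$ be the adjoint state of $\current{\control}$, and let $\current{\control}, \current{\latentvariable} \in L^\infty(\Omegactrl)$.

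\emph{Step~1: Residuals.} First we evaluate the residuals \eqref{eq:newton-residuals} at this iterate. The third row gives $r_3 = \Kstate S \current{\control} - \Ec \, \current{\control} = 0$. The adjoint equation gives $r_1 = 0$. By \eqref{eq:multiplier-map}, $\alpha_k r_2 = \alpha_k \lambda(\current{\control}) + \current{\latentvariable} - \latentvariable^{k-1}$ and $\alpha_k r_4 = \current{\control} - \conjugatederivative(\current{\latentvariable})$. Hence $\alpha_k (r_2, r_4) = F_k(\current{\control}, \current{\latentvariable})$, and this pair lies in $X$ by the $L^\infty$-regularity of $\lambda$.

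\emph{Step~2: Propagation of the relations.} Let $(\increment{\state}, \increment{\control}, \increment{\adjointstate}, \increment{\latentvariable})$ be the unique solution of \eqref{eq:newton-system} from \cref{lemma:newton-solvability}. Since $r_3 = 0$, the third row gives $\Kstate \increment{\state} = \Ec \, \increment{\control}$, so $\increment{\state} = S \increment{\control}$. Since $r_1 = 0$, the first row gives $\Kstate \increment{\adjointstate} = -\Mobs \, S \increment{\control}$. By linearity of the state and adjoint equations, the new state is $S(\current{\control} + \increment{\control})$ and the new adjoint state is the adjoint state of $\current{\control} + \increment{\control}$. Moreover $\restrictc \increment{\adjointstate} = -(\lambda' - \beta \id) \increment{\control}$, so the second row, divided by $\Mctrl$ (which is injective), reads $\beta \increment{\control} + (\lambda' - \beta \id) \increment{\control} + \frac{1}{\alpha_k} \increment{\latentvariable} = -r_2$. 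Multiplying by $\alpha_k$ gives $\alpha_k \lambda' \increment{\control} + \increment{\latentvariable} = -\alpha_k r_2$. Likewise the fourth row gives $\increment{\control} - D \, \increment{\latentvariable} = -\alpha_k r_4$. By \eqref{eq:two-field-derivative} and Step~1, these two equations state exactly that
\[
F_k'(\current{\control}, \current{\latentvariable})(\increment{\control}, \increment{\latentvariable}) = -F_k(\current{\control}, \current{\latentvariable}).
\]

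\emph{Step~3: Identification with the Newton step for $F_k$.} This step is the main subtlety. The Newton increment for the scaled form is determined in $\controlspace \times \latentspace$, while the Newton step for $F_k$ is determined in $X$, so we must show that the two coincide and that the increment lies in $X$. \Cref{proposition:uniform-solvability} yields a unique solution in $X$ of the equation above. This solution also solves the $L^2$ reduced equations, and reversing Step~2 shows that, together with $S \increment{\control}$ and the corresponding adjoint increment, it solves \eqref{eq:newton-system}. By the uniqueness in \cref{lemma:newton-solvability}, it equals the increment of the scaled Newton step. Hence $\current{\control} + \increment{\control}, \current{\latentvariable} + \increment{\latentvariable} \in L^\infty(\Omegactrl)$, the new iterate satisfies all the relations of the statement, and induction completes the proof.
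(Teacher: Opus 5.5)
Your proposal is correct and follows the paper's proof: the decisive part is your Step~3, where the solution in $X$ from \cref{proposition:uniform-solvability} is completed by the state and adjoint increments to a solution of \eqref{eq:newton-system}, and the uniqueness in \cref{lemma:newton-solvability} identifies it with the Newton step of the scaled form, followed by induction. Your Step~2, which shows that the $L^2$ Newton increment satisfies the reduced equations, is correct but not needed once Step~3 is in place; this is why the paper omits that forward direction.
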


\begin{proof}
	Let $(\current{\state}, \current{\control}, \current{\adjointstate}, \current{\latentvariable})$ be a Newton iterate with $\current{\state} = S \current{\control}$, with $\current{\adjointstate}$ the adjoint state of $\current{\control}$, and with $\current{\control}, \current{\latentvariable} \in L^\infty(\Omegactrl)$.
	Then $r_1 = r_3 = 0$, and $r_2 = \lambda(\current{\control}) + \frac{1}{\alpha_k} (\current{\latentvariable} - \latentvariable^{k-1})$ in \eqref{eq:newton-residuals} by \eqref{eq:multiplier-map}.
	By \cref{proposition:uniform-solvability}, the equation $F_k'(\current{\control}, \current{\latentvariable}) (\increment{\control}, \increment{\latentvariable}) = -F_k(\current{\control}, \current{\latentvariable})$ has a unique solution $(\increment{\control}, \increment{\latentvariable}) \in X$, the Newton step of $F_k$ at $(\current{\control}, \current{\latentvariable})$.
	Let $\increment{\state} \coloneqq S \increment{\control}$, and let $\increment{\adjointstate} \in \statespace$ solve $\Kstate \increment{\adjointstate} = -\Mobs \, \increment{\state}$.
	Since $r_1 = 0$ and $r_3 = 0$, these two definitions are the first and the third row of \eqref{eq:newton-system}.
	Moreover,
	\begin{equation*}
		\restrictc \increment{\adjointstate}
		=
		-\restrictc \Kstate^{-1} \Mobs \, S \increment{\control}
		=
		-(\lambda' - \beta \id) \increment{\control}
		.
	\end{equation*}
	Since $\dualmap{\Ec} = \Mctrl \, \restrictc$, $\MDctrl \latenttest = \Mctrl (D \, \latenttest)$ and $\Mctrl$ is injective, the second and the fourth row of \eqref{eq:newton-system}, multiplied by $\alpha_k$, are equivalent to the equations
	\begin{align*}
		\alpha_k \paren[big](){\beta \, \increment{\control} - \restrictc \increment{\adjointstate}} + \increment{\latentvariable}
		&
		=
		-\paren[big](){\alpha_k \lambda(\current{\control}) + \current{\latentvariable} - \latentvariable^{k-1}}
		,
		\\
		\increment{\control} - D \, \increment{\latentvariable}
		&
		=
		-\paren[big](){\current{\control} - \conjugatederivative(\current{\latentvariable})}
	\end{align*}
	in $\controlspace$, with $D = \conjugatesecondderivative(\current{\latentvariable})$.
	With $\restrictc \increment{\adjointstate} = -(\lambda' - \beta \id) \increment{\control}$, their left-hand sides are the components of $F_k'(\current{\control}, \current{\latentvariable}) (\increment{\control}, \increment{\latentvariable})$ by \eqref{eq:two-field-derivative}.
	Their right-hand sides are those of $-F_k(\current{\control}, \current{\latentvariable})$ by \eqref{eq:two-field-map}.
	Hence the two equations hold, and $(\increment{\state}, \increment{\control}, \increment{\adjointstate}, \increment{\latentvariable})$ solves \eqref{eq:newton-system}.
	By the uniqueness in \cref{lemma:newton-solvability}, it is the Newton step of the scaled form at the current iterate.
	Since $S$ is linear, $\Kstate \increment{\adjointstate} = -\Mobs \, S \increment{\control}$ and $(\increment{\control}, \increment{\latentvariable}) \in X$, the next iterate satisfies the three relations of the lemma again, and its components $(\current{\control} + \increment{\control}, \current{\latentvariable} + \increment{\latentvariable})$ form the next Newton iterate of $F_k$.
	Induction over the Newton iteration proves the lemma.
\end{proof}

\begin{theorem}[Local quadratic convergence]
	\label{theorem:newton-local-convergence}
	Let $\latentvariable^0 \in L^\infty(\Omegactrl)$ and $k \ge 1$, let $\lambda^0 \coloneqq \lambda(\control^0)$ with $\control^0 \coloneqq \conjugatederivative(\latentvariable^0)$, and let
	\begin{equation}
		\label{eq:newton-local-constants}
		\omega
		\coloneqq
		\e^2 \solvabilityconstant^2 \max \set[Big]{1, \frac{1}{\alpha_k \beta}}
		,
		\quad
		\quad
		c_0
		\coloneqq
		\frac{2}{3 \, \e^2 \solvabilityconstant^2 \, (1 + \solvabilityconstant^2)}
		.
	\end{equation}
	If $\alpha_k \, \norm{\lambda^{k-1}}_{L^\infty(\Omegactrl)} \le c_0$, then Newton's method for the scaled form \eqref{eq:optimality-form-scaled}, started at $(S \control^{k-1}, \control^{k-1}, \adjointstate^{k-1}, \latentvariable^{k-1})$ with the adjoint state $\adjointstate^{k-1}$ of $\control^{k-1}$, converges to the solution $(S \control^k, \control^k, \adjointstate^k, \latentvariable^k)$ of \eqref{eq:optimality-form-scaled} in $\statespace \times L^\infty(\Omegactrl) \times \statespace \times L^\infty(\Omegactrl)$.
	Moreover, let $x^j \in X = L^\infty(\Omegactrl) \times L^\infty(\Omegactrl)$ denote the pair of the control and the latent variable of its $j$-th iterate, so that $x^0 = (\control^{k-1}, \latentvariable^{k-1})$ and $x^j \to (\control^k, \latentvariable^k)$.
	Then, in the norm \eqref{eq:latent-norm},
	\begin{equation}
		\label{eq:quadratic-contraction}
		\norm{x^{j+2} - x^{j+1}}_X
		\le
		\frac{\omega}{2} \, \norm{x^{j+1} - x^j}_X^2
		\quad
		\text{for all } j \ge 0
		.
	\end{equation}
\end{theorem}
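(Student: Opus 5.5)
By \cref{lemma:newton-two-field}, the pair $(\control,\latentvariable)$ of the Newton iterates for the scaled form coincides with Newton's method for $F_k$ on $X$, started at $x^0 = (\control^{k-1}, \latentvariable^{k-1})$. The starting iterate satisfies the hypotheses of that lemma, since $\control^{k-1}, \latentvariable^{k-1} \in L^\infty(\Omegactrl)$ by \cref{proposition:latent-linfty-bounds}, or by assumption for $k = 1$. Once $x^j$ converges in $X$, the states $S\control^j$ and adjoints converge in $\statespace$ by linearity and boundedness of $S$. We therefore apply the affine covariant Newton--Mysovskikh theorem (\cite[Thm.~1.1]{WeiserSchielaDeuflhard:2005:1}) to $F_k$. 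It requires an affine covariant Lipschitz constant $\omega$ with
\begin{equation*}
	\norm[big]{F_k'(z)^{-1}\paren[big](){F_k'(y+t(x-y)) - F_k'(y)}(x-y)}_X \le t\,\omega\,\norm{x-y}_X^2
\end{equation*}
on a suitable ball around $x^0$. It also requires $h_0 \coloneqq \omega \norm{\increment{x}^0}_X < 2$, and, to give \eqref{eq:quadratic-contraction}, that the iterates stay in that ball. The zero of $F_k$ in the ball is then unique, so the limit is $(\control^k,\latentvariable^k)$ by \cref{proposition:subproblem-wellposedness}.

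\emph{Lipschitz constant.} Only the second component of $F_k'$ depends on the point, through $D = \exp(\latentvariable)$. The difference of derivatives applied to $(\controltest, \latenttest) = x - y$ is therefore $(0, -(\exp(\latentvariable_t) - \exp(\latentvariable_y))\latenttest)$. In modulus this is pointwise at most $t \exp(\max(\latentvariable_t,\latentvariable_y))\abs{\latenttest}^2$ by the mean value theorem. I then apply \cref{proposition:uniform-solvability} at $z$ with $\rho_1 = 0$ and $\rho_2$ equal to this function. This gives $\norm{\controltest'}_\infty \le \solvabilityconstant \norm{\rho_2/(1+\alpha_k\beta D_z)}_\infty$ and $\norm{\latenttest'}_\infty \le \alpha_k\beta\solvabilityconstant^2 \norm{\rho_2/(1+\alpha_k\beta D_z)}_\infty$.

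The key is to compare $\exp(\latentvariable_t)$ with $D_z = \exp(\latentvariable_z)$. If all points lie in the ball of radius $1$ in the latent component, then $\exp(\latentvariable_t) \le \e^2 D_z$. This explains the factor $\e^2$. Hence
\begin{equation*}
	\frac{\e^{2} D_z}{1 + \alpha_k\beta D_z} \le \e^2 \min\set{D_z, 1/(\alpha_k\beta)}.
\end{equation*}
Multiplying by $\alpha_k\beta\solvabilityconstant^2$ gives a bound $\e^2\solvabilityconstant^2$ for the latent component. For the control component, $\solvabilityconstant\,\e^2/(\alpha_k\beta) \le \e^2\solvabilityconstant^2\max\set{1,1/(\alpha_k\beta)}$. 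Together these yield $\omega$ as in \eqref{eq:newton-local-constants}. I expect the main obstacle to be this bookkeeping, which must make $\omega$ uniform in $D$ and hence in the iterate, with the ball radius fixed at $1$ so that the factor $\e^2$ appears.

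\emph{Smallness of the first step.} At $x^0$, \cref{lemma:optimality-forms} and \eqref{eq:lvpp-system-control-identification} for step $k-1$ give $F_k(x^0) = (\alpha_k\lambda^{k-1}, 0)$; for $k = 1$ this uses the definition of $\control^0$. \Cref{proposition:uniform-solvability} with $\rho_1 = \alpha_k\lambda^{k-1}$ and $\rho_2 = 0$ then yields two bounds. The first is $\norm{\increment{\control}^0}_\infty \le \solvabilityconstant \norm{D\rho_1/(1+\alpha_k\beta D)}_\infty \le \solvabilityconstant\norm{\lambda^{k-1}}_\infty/\beta$. The second is $\norm{\increment{\latentvariable}^0}_\infty \le (1+\solvabilityconstant^2)\alpha_k\norm{\lambda^{k-1}}_\infty$. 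Combining them, $\omega\norm{\increment{x}^0}_X \le \e^2\solvabilityconstant^2(1+\solvabilityconstant^2)\alpha_k\norm{\lambda^{k-1}}_\infty$, possibly after a case distinction on $\alpha_k\beta \gtrless 1$ to absorb the factor $\max\set{1,1/(\alpha_k\beta)}$ against $1/\beta$. The assumption $\alpha_k\norm{\lambda^{k-1}}_\infty \le c_0$ then gives $h_0 \le 2/3 < 2$.

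\emph{Staying in the ball.} The radius required by the theorem is $\norm{\increment{x}^0}_X/(1-h_0/2) \le \tfrac32\norm{\increment{x}^0}_X$, which is $\le 1$ by the choice of $c_0$. This justifies the a~priori ball used for the factor $\e^2$. The Newton--Mysovskikh theorem then yields convergence to the unique zero together with the contraction \eqref{eq:quadratic-contraction}, and \cref{lemma:newton-two-field} transfers the result back to the scaled form.
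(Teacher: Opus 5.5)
Your argument is complete for $\alpha_k\beta\ge1$; there $\norm{\cdot}_X$ coincides with the norm the paper uses, and your steps match the paper's. For $\alpha_k\beta<1$, however, the case distinction you anticipate does not close, and this is a genuine gap.

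\textbf{Why the unweighted norm fails for $\alpha_k\beta<1$.} In $\norm{\cdot}_X$ the Lipschitz constant really carries the factor $1/(\alpha_k\beta)$. The control component contributes $\solvabilityconstant\e^2 D_z/(1+\alpha_k\beta D_z)$, which is only bounded by $\solvabilityconstant\e^2/(\alpha_k\beta)$ uniformly in the slack. The control part of the first step is only bounded by $\solvabilityconstant\norm{W\lambda^{k-1}}_{L^\infty(\Omegactrl)}\le\solvabilityconstant\norm{\lambda^{k-1}}_{L^\infty(\Omegactrl)}/\beta$. This cannot be improved uniformly in the slack, since $W\to1/\beta$ where $\alpha_k\beta D\gg1$. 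Under the hypothesis $\alpha_k\norm{\lambda^{k-1}}_{L^\infty(\Omegactrl)}\le c_0$, your estimates therefore give only
\[
h_0\le\frac{2}{3\alpha_k\beta}\max\set{1,\frac{\solvabilityconstant}{(1+\solvabilityconstant^2)\alpha_k\beta}} .
\]
The latent part alone already contributes $2/(3\alpha_k\beta)$, so this bound is at least $2$ once $\alpha_k\beta\le1/3$. The ball has the same problem: its radius in $\norm{\cdot}_X$ is at least $\norm{\increment{\control}^0}_{L^\infty(\Omegactrl)}$, which need not be below $1$. On such a ball the comparison $\exp(\latentvariable_t)\le\e^2D_z$, on which your Lipschitz constant rests, is no longer justified. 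So the hypotheses of the Newton--Mysovskikh theorem are not verified in the norm you chose.

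\textbf{The missing idea.} The paper applies the theorem in the $\alpha_k$-dependent norm
\[
\norm{(\controltest,\latenttest)}_k\coloneqq\max\set{\min\set{1,\alpha_k\beta}\norm{\controltest}_{L^\infty(\Omegactrl)},\norm{\latenttest}_{L^\infty(\Omegactrl)}} .
\]
It works on the convex set $\set{\norm{\latentvariable-\latentvariable^{k-1}}_{L^\infty(\Omegactrl)}<1}$, which constrains only the latent variable, the only argument on which $F_k'$ depends. In this norm everything is uniform:
\begin{itemize}
\item Since $\min\set{1,\alpha_k\beta}D\le1+\alpha_k\beta D$, the Lipschitz constant is $\e^2\solvabilityconstant^2$, independent of $\alpha_k$ and $D$.
\item The first step satisfies $\norm{x^1-x^0}_k\le(1+\solvabilityconstant^2)\alpha_k\norm{\lambda^{k-1}}_{L^\infty(\Omegactrl)}\le2/(3\e^2\solvabilityconstant^2)$, so $h_0\le2/3$.
\item The ball has $\norm{\cdot}_k$-radius at most $1/(\e^2\solvabilityconstant^2)<1$. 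This keeps the latent deviation below $1$ while still admitting control steps of size $\solvabilityconstant\norm{\lambda^{k-1}}_{L^\infty(\Omegactrl)}/\beta$.
\end{itemize}
The quadratic estimate with constant $\e^2\solvabilityconstant^2/2$ in $\norm{\cdot}_k$ is then transferred to $\norm{\cdot}_X$ via $\min\set{1,\alpha_k\beta}\norm{x}_X\le\norm{x}_k\le\norm{x}_X$. The factor $\max\set{1,1/(\alpha_k\beta)}$ in $\omega$ comes from this norm equivalence at the very end. It is not the Lipschitz constant that enters $h_0$.
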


\begin{proof}
	\emph{Step~1: Reduction to $F_k$.}
	The functions $\control^{k-1}$, $\latentvariable^{k-1}$ and $\lambda^{k-1}$ lie in $L^\infty(\Omegactrl)$, by \cref{proposition:latent-linfty-bounds} for $k \ge 2$ and by the definition of $\control^0$ and the $L^\infty$-regularity of \cref{subsection:problem-setting} for $k = 1$, and $\control^{k-1} = \conjugatederivative(\latentvariable^{k-1})$ by \cref{proposition:subproblem-wellposedness} for $k \ge 2$ and by definition for $k = 1$.
	By \cref{lemma:newton-two-field}, the pairs $x^j$ are the Newton iterates of $F_k$ started at $x^0 = (\control^{k-1}, \latentvariable^{k-1})$, and the state and the adjoint state of every iterate are those of its control.
	Since $S$ and the map from the control to its adjoint state are affine and bounded on $\controlspace$, and $\norm{\cdot}_{\controlspace} \le \abs{\Omegactrl}^{1/2} \, \norm{\cdot}_{L^\infty(\Omegactrl)}$, it suffices to show that $x^j \to (\control^k, \latentvariable^k)$ in $X$ with \eqref{eq:quadratic-contraction}.
	The proof works with the norm
	\begin{equation*}
		\norm{(\controltest, \latenttest)}_k
		\coloneqq
		\max \set[big]{\min \set{1, \alpha_k \beta} \, \norm{\controltest}_{L^\infty(\Omegactrl)}, \, \norm{\latenttest}_{L^\infty(\Omegactrl)}}
	\end{equation*}
	on $X$, which satisfies $\min \set{1, \alpha_k \beta} \, \norm{x}_X \le \norm{x}_k \le \norm{x}_X$.
	For $\alpha_k \beta < 1$ it weights the control by $\alpha_k \beta$.
	Its balls around $x^0$ then contain control steps of the size $\solvabilityconstant \, \norm{\lambda^{k-1}}_{L^\infty(\Omegactrl)} / \beta$ that Step~3 allows, and they keep the latent variable, the only argument on which $F_k'$ depends, close to $\latentvariable^{k-1}$.
	We apply \cite[Thm.~1.1]{WeiserSchielaDeuflhard:2005:1} to $F_k$ on $X$ with the norm $\norm{\cdot}_k$ and on the convex set
	\begin{equation*}
		\cN_k
		\coloneqq
		\setDef{(\control, \latentvariable) \in X}{\norm{\latentvariable - \latentvariable^{k-1}}_{L^\infty(\Omegactrl)} < 1}
		,
	\end{equation*}
	on which $F_k$ is continuously differentiable with invertible derivatives by \cref{proposition:uniform-solvability}.
	That theorem asserts: if $\norm{F_k'(x_3)^{-1} (F_k'(x_1) - F_k'(x_2)) (\controltest, \latenttest)}_k \le \e^2 \solvabilityconstant^2 \, \norm{x_1 - x_2}_k \, \norm{(\controltest, \latenttest)}_k$ for all $x_1, x_2, x_3 \in \cN_k$ and $(\controltest, \latenttest) \in X$, if $\e^2 \solvabilityconstant^2 \, \norm{x^1 - x^0}_k < 2$, and if the closed ball around $x^0$ with radius $\norm{x^1 - x^0}_k / (1 - \frac{1}{2} \e^2 \solvabilityconstant^2 \, \norm{x^1 - x^0}_k)$ lies in $\cN_k$, then the Newton iterates remain in this ball, converge to a zero of $F_k$, and satisfy $\norm{x^{j+2} - x^{j+1}}_k \le \frac{1}{2} \e^2 \solvabilityconstant^2 \, \norm{x^{j+1} - x^j}_k^2$ for all $j \ge 0$.

	\emph{Step~2: The Lipschitz condition.}
	Let $x_1 = (\control_1, \latentvariable_1)$, $x_2 = (\control_2, \latentvariable_2)$ and $x_3 = (\control_3, \latentvariable_3)$ in $\cN_k$, let $(\controltest, \latenttest) \in X$, and let $D_1$, $D_2$ and $D_3$ be the exponentials of $\latentvariable_1$, $\latentvariable_2$ and $\latentvariable_3$.
	By \eqref{eq:two-field-derivative},
	\begin{align*}
		\paren[big](){F_k'(x_1) - F_k'(x_2)} (\controltest, \latenttest)
		&
		=
		\paren[big](){0, \; -(D_1 - D_2) \, \latenttest}
		,
		\\
		\abs{D_1 - D_2}
		&
		\le
		\e^{\max \set{\latentvariable_1, \latentvariable_2}} \, \abs{\latentvariable_1 - \latentvariable_2}
		\le
		\e^2 D_3 \abs{\latentvariable_1 - \latentvariable_2}
		\quad
		\text{\ale in } \Omegactrl
		,
	\end{align*}
	where the second inequality holds since $\latentvariable_1$, $\latentvariable_2$ and $\latentvariable_3$ differ from $\latentvariable^{k-1}$ by less than $1$.
	Let $(\tilde{\controltest}, \tilde{\latenttest}) \coloneqq F_k'(x_3)^{-1} (F_k'(x_1) - F_k'(x_2)) (\controltest, \latenttest)$.
	By \cref{proposition:uniform-solvability} with $(\rho_1, \rho_2) = (0, -(D_1 - D_2) \, \latenttest)$,
	\begin{align*}
		\min \set{1, \alpha_k \beta} \, \norm{\tilde{\controltest}}_{L^\infty(\Omegactrl)}
		&
		\le
		\solvabilityconstant \, \norm[Big]{\frac{\min \set{1, \alpha_k \beta} \, (D_1 - D_2) \, \latenttest}{1 + \alpha_k \beta D_3}}_{L^\infty(\Omegactrl)}
		\\
		&
		\le
		\e^2 \solvabilityconstant \, \norm{\latentvariable_1 - \latentvariable_2}_{L^\infty(\Omegactrl)} \, \norm{\latenttest}_{L^\infty(\Omegactrl)}
		,
		\\
		\norm{\tilde{\latenttest}}_{L^\infty(\Omegactrl)}
		&
		\le
		\alpha_k \beta \, \solvabilityconstant \, \norm{\tilde{\controltest}}_{L^\infty(\Omegactrl)}
		\le
		\solvabilityconstant^2 \, \norm[Big]{\frac{\alpha_k \beta \, (D_1 - D_2) \, \latenttest}{1 + \alpha_k \beta D_3}}_{L^\infty(\Omegactrl)}
		\\
		&
		\le
		\e^2 \solvabilityconstant^2 \, \norm{\latentvariable_1 - \latentvariable_2}_{L^\infty(\Omegactrl)} \, \norm{\latenttest}_{L^\infty(\Omegactrl)}
		,
	\end{align*}
	where the last inequalities use the pointwise bound on $D_1 - D_2$ together with $\min \set{1, \alpha_k \beta} D_3 \le 1 + \alpha_k \beta D_3$ and $\alpha_k \beta D_3 \le 1 + \alpha_k \beta D_3$.
	Both bounds are at most $\e^2 \solvabilityconstant^2 \, \norm{x_1 - x_2}_k \, \norm{(\controltest, \latenttest)}_k$, since $\solvabilityconstant \ge 1$ and since the latent components satisfy $\norm{\latentvariable_1 - \latentvariable_2}_{L^\infty(\Omegactrl)} \le \norm{x_1 - x_2}_k$ and $\norm{\latenttest}_{L^\infty(\Omegactrl)} \le \norm{(\controltest, \latenttest)}_k$.
	Hence, for all $x_1, x_2, x_3 \in \cN_k$ and $(\controltest, \latenttest) \in X$,
	\begin{equation}
		\label{eq:newton-lipschitz-condition}
		\norm[big]{F_k'(x_3)^{-1} (F_k'(x_1) - F_k'(x_2)) (\controltest, \latenttest)}_k
		\le
		\e^2 \solvabilityconstant^2 \, \norm{x_1 - x_2}_k \, \norm{(\controltest, \latenttest)}_k
		.
	\end{equation}

	\emph{Step~3: The first Newton step.}
	Since $\control^{k-1} = \conjugatederivative(\latentvariable^{k-1})$ and $\lambda^{k-1} = \lambda(\control^{k-1})$, \eqref{eq:two-field-map} gives $F_k(x^0) = (\alpha_k \lambda^{k-1}, 0)$.
	By \cref{proposition:uniform-solvability} with $(\rho_1, \rho_2) = (-\alpha_k \lambda^{k-1}, 0)$ and $D = \exp(\latentvariable^{k-1})$, the Newton step $x^1 - x^0 = (\increment{\control}, \increment{\latentvariable})$ satisfies
	\begin{align*}
		\min \set{1, \alpha_k \beta} \, \norm{\increment{\control}}_{L^\infty(\Omegactrl)}
		&
		\le
		\solvabilityconstant \, \norm[Big]{\frac{\min \set{1, \alpha_k \beta} \, \alpha_k D}{1 + \alpha_k \beta D} \, \lambda^{k-1}}_{L^\infty(\Omegactrl)}
		\\
		&
		\le
		\solvabilityconstant \, \alpha_k \, \norm{\lambda^{k-1}}_{L^\infty(\Omegactrl)}
		,
		\\
		\norm{\increment{\latentvariable}}_{L^\infty(\Omegactrl)}
		&
		\le
		\alpha_k \, \norm{\lambda^{k-1}}_{L^\infty(\Omegactrl)} + \alpha_k \beta \, \solvabilityconstant \, \norm{\increment{\control}}_{L^\infty(\Omegactrl)}
		\\
		&
		\le
		(1 + \solvabilityconstant^2) \, \alpha_k \, \norm{\lambda^{k-1}}_{L^\infty(\Omegactrl)}
		,
	\end{align*}
	where the last inequalities hold since $\min \set{1, \alpha_k \beta} D \le 1 + \alpha_k \beta D$ and $\alpha_k \beta D \le 1 + \alpha_k \beta D$.
	Therefore
	\begin{equation*}
		\norm{x^1 - x^0}_k
		\le
		(1 + \solvabilityconstant^2) \, \alpha_k \, \norm{\lambda^{k-1}}_{L^\infty(\Omegactrl)}
		\le
		\frac{2}{3 \, \e^2 \solvabilityconstant^2}
		,
	\end{equation*}
	where the second inequality is the hypothesis $\alpha_k \, \norm{\lambda^{k-1}}_{L^\infty(\Omegactrl)} \le c_0$ with $c_0$ from \eqref{eq:newton-local-constants}.

	\emph{Step~4: Convergence.}
	By Step~3, $\e^2 \solvabilityconstant^2 \, \norm{x^1 - x^0}_k \le 2/3 < 2$, and the closed ball around $x^0$ with radius $\norm{x^1 - x^0}_k / (1 - \frac{1}{2} \e^2 \solvabilityconstant^2 \, \norm{x^1 - x^0}_k) \le 1 / (\e^2 \solvabilityconstant^2) < 1$ lies in $\cN_k$, since $\norm{\latentvariable - \latentvariable^{k-1}}_{L^\infty(\Omegactrl)} \le \norm{x - x^0}_k$ for $x = (\control, \latentvariable) \in X$.
	Together with \eqref{eq:newton-lipschitz-condition}, these are the hypotheses of the cited theorem, so the Newton iterates $x^j$ converge to a zero of $F_k$, and the equivalence of the norms in Step~1 gives
	\begin{equation*}
		\norm{x^{j+2} - x^{j+1}}_X
		\le
		\frac{\norm{x^{j+2} - x^{j+1}}_k}{\min \set{1, \alpha_k \beta}}
		\le
		\frac{\e^2 \solvabilityconstant^2}{2 \min \set{1, \alpha_k \beta}} \, \norm{x^{j+1} - x^j}_k^2
		\le
		\frac{\omega}{2} \, \norm{x^{j+1} - x^j}_X^2
		,
	\end{equation*}
	which is \eqref{eq:quadratic-contraction}.
	The zero is $(\control^k, \latentvariable^k)$, the unique zero of $F_k$, and Step~1 completes the proof.
\end{proof}

The constant $c_0$ depends on the problem only through $\solvabilityconstant$, and neither on $\alpha_k$ nor on the slack $D$ nor on $k$.
The proximity parameter enters the hypothesis only through $\alpha_k \, \norm{\lambda^{k-1}}_{L^\infty(\Omegactrl)}$, so that for every previous iterate the hypothesis holds for all sufficiently small $\alpha_k$.
The contraction constant $\omega / 2$ does not depend on $\alpha_k$ for $\alpha_k \beta \ge 1$, and it grows like $1 / (\alpha_k \beta)$ for $\alpha_k \beta < 1$.
If $\lambda^* \ne 0$, then $\norm{\lambda^{k-1}}_{L^\infty(\Omegactrl)} \ge \abs{\Omegactrl}^{-1/2} \, \norm{\lambda^{k-1}}_{\controlspace} \to \abs{\Omegactrl}^{-1/2} \, \norm{\lambda^*}_{\controlspace} > 0$ by \cref{theorem:lvpp-convergence}~\ref{item:lvpp-convergence-multiplier}, and the hypothesis bounds $\alpha_k$.
The theorem thus covers the warm start at the previous iterate for bounded proximity parameters.
\Cref{section:complete-algorithm} lets $\alpha_k$ grow by a heuristic rule, beyond this range.

\section{Iterative Solution of the Newton Systems}
\label{section:preconditioning}

We aim to solve the Newton systems \eqref{eq:newton-system} iteratively by the minimal residual method (\minres) with a block-diagonal preconditioner.
\Cref{subsection:preconditioner} motivates and defines the preconditioner.
\Cref{subsection:saddle-point-bounds} reviews a result by \cite{Zulehner:2011:1} that breaks down the estimate of the spectrum of the Newton operator, with respect to the inner product defined by the preconditioner, into estimates of the individual rows of the saddle-point operator.
\Cref{subsection:preconditioner-constants} establishes these estimates for the Newton systems of the LVPP method applied to problem \eqref{eq:problem}, with constants which depend neither on $\alpha_k$ nor on the iterate nor on the observation and control domains.
\Cref{subsection:spectrum-minres} summarizes the resulting bounds on the spectrum and derives the contraction factor of \minres from them.

The constants do, however, depend on the control cost parameter~$\beta$.
With the parameter $\coefficientbound = 1/\beta$ of \cref{subsection:preconditioner}, the constant that bounds the spectrum away from zero is of the order $\beta^{1/2}$ as $\beta \to 0$, so that the bounds are not robust with respect to $\beta$.

\subsection{The Preconditioner}
\label{subsection:preconditioner}

This subsection writes the Newton system \eqref{eq:newton-system} as a saddle-point operator, motivates the choice of the inner products and defines the preconditioner.
Let $\primalspace \coloneqq \statespace \times \controlspace$ denote the space of the unknowns $(\state, \control)$ and $\multiplierspace \coloneqq \statespace \times \controlspace$ the space of $(\adjointstate, \latentvariable)$.
The Newton system \eqref{eq:newton-system} has the saddle-point form
\begin{equation}
	\label{eq:newton-blocks}
	\begin{aligned}
		\newtonop
		&
		\coloneqq
		\begin{bmatrix}
			\Ablock & \dualmap{\Bblock}
			\\
			\Bblock & -\Cblock
		\end{bmatrix}
		,
		&
		\Ablock
		&
		\coloneqq
		\begin{bmatrix}
			\Mobs & 0
			\\
			0 & \beta \Mctrl
		\end{bmatrix}
		,
		\\
		\Bblock
		&
		\coloneqq
		\begin{bmatrix}
			\Kstate & -\Ec
			\\
			0 & \frac{1}{\alpha_k} \Mctrl
		\end{bmatrix}
		,
		&
		\Cblock
		&
		\coloneqq
		\begin{bmatrix}
			0 & 0
			\\
			0 & \frac{1}{\alpha_k} \MDctrl
		\end{bmatrix}
		,
	\end{aligned}
\end{equation}
with $\Ablock \in \cL(\primalspace, \dualspace{\primalspace})$ and $\Cblock \in \cL(\multiplierspace, \dualspace{\multiplierspace})$ symmetric and positive semidefinite and $\Bblock \in \cL(\primalspace, \dualspace{\multiplierspace})$ bounded.

A preconditioner for \minres is a symmetric and positive definite operator $\precond \in \cL(\primalspace \times \multiplierspace, \dualspace{(\primalspace \times \multiplierspace)})$, that is, the Riesz map of an inner product on $\primalspace \times \multiplierspace$ (\cite{GuennelHerzogSachs:2014:1}).
The convergence of the method is governed by the spectrum $\sigma(\newtonop; \precond)$ of $\newtonop$ with respect to the inner product defined by $\precond$ (\cref{remark:saddle-point-spectrum}), and \cref{theorem:saddle-point-bounds} below bounds this spectrum by four constants, which measure the two rows of $\newtonop$ against the inner products of $\primalspace$ and $\multiplierspace$.
The task is therefore to choose inner products for which these constants depend neither on $\alpha_k$ nor on the iterate.
This is the viewpoint of operator preconditioning (\cite{Hiptmair:2006:1,MardalWinther:2011:1,Zulehner:2011:1}).

Two features of the saddle-point operator $\newtonop$ guide our approach.
The block $\Ablock$ is not coercive on $\primalspace$: the observation term $\Mobs$ vanishes on states that vanish on $\Omegaobs$, and the control term $\beta \Mctrl$ carries the small factor $\beta$.
The classical bounds for saddle-point operators with a coercive first block (\cite{RustenWinther:1992:1,SilvesterWathen:1994:1}) do not apply, and the inner product on $\primalspace$ has to draw its coercivity from $\Bblock$ as well, which is what the conditions of \cref{theorem:saddle-point-bounds} allow.
The block $\Cblock$ contains the slack $D = \exp(\current{\latentvariable})$ divided by $\alpha_k$.
Its size changes with the iterate and with $\alpha_k$, so the inner product on $\multiplierspace$ has to absorb it.

Our choice of inner products is motivated by \cref{proposition:newton-qp}: the Newton step is the optimality system of the linear-quadratic problem \eqref{eq:newton-qp}, a tracking problem on $\Omegaobs$ with the control cost $1/W = \beta + 1 / (\alpha_k D)$, which lies between $\beta$ and $+\infty$.
For the tracking problem with observation on all of $\Omega$ and a constant control cost $\beta$, the inner products with the Riesz maps $\Mstate + \beta^{1/2} \Kstate$ for the state and $\beta^{-1} \Mstate + \beta^{-1/2} \Kstate$ for the adjoint state were shown to yield constants that do not depend on $\beta$ in \cite[Sec.~4.1]{Zulehner:2011:1} and \cite[Sec.~4]{SchoeberlZulehner:2007:1}.
We take these inner products with the data of \eqref{eq:newton-qp}: the observation term $\Mobs$ replaces the mass $\Mstate$ in the state block, the mass on $\Omegactrl$ weighted by $W$, $\CWstate = \Ec \, W \restrictc$, replaces $\beta^{-1} \Mstate$ in the adjoint block, and the scale is set by a bound $\coefficientbound$ for the coefficient $W$ in place of the constant~$\beta^{-1}$.
For the two unknowns $\control$ and $\latentvariable$ subject to pointwise algebraic equations, the weight $\controlweight = \beta + (1 + \alpha_k D)^{-1}$ and the block for $\latentvariable$ are chosen such that the entries of the second and the fourth row of $\newtonop$ are bounded by the norms with constants that do not depend on $D$ (\cref{lemma:preconditioner-weights}), so that the slack disappears from all constants.

For $W = \chi_{\sI} / \beta$, the coefficient of the active set method in \cref{remark:pdas}, $\CWstate$ is $\beta^{-1}$ times the mass of the inactive set.
Preconditioners for the Newton systems of the active set method were designed in \cite{HerzogSachs:2010:1,StollWathen:2012:1,KollmannZulehner:2013:1,PorcelliSimonciniTani:2015:1}.
For an elliptic problem similar to \eqref{eq:problem}, the inactive mass in the adjoint block appears in \cite[Sec.~4.1]{Kollmann:2013:1}, where it improves the bound on the condition number from the order $\beta^{-1}$, obtained with the mass $\Mstate$ in its place, to the order $\beta^{-1/2}$.

Let $\coefficientbound \ge \max \set{1, \norm{W}_{L^\infty(\Omegactrl)}}$ be a constant.
Since $0 \le W < 1/\beta$ holds by \eqref{eq:newton-coefficient}, the choice $\coefficientbound \coloneqq \max \set{1, 1/\beta}$ is always admissible, and it is $\coefficientbound = 1/\beta$ for $\beta \le 1$.
We precondition \eqref{eq:newton-system} with the block-diagonal operator
\begin{equation}
	\label{eq:preconditioner}
	\precond
	\coloneqq
	\diag\paren[big](){\Pobs, \, \controlweight \, \Mctrl, \, \Qctrl, \, \Qlatent}
\end{equation}
whose blocks are
\begin{equation}
	\label{eq:preconditioner-blocks}
	\begin{aligned}
		\Pobs
		&
		\coloneqq
		\Mobs + \coefficientbound^{-1/2} \paren(){\KMstate}
		,
		&
		\controlweight
		&
		\coloneqq
		\beta + \frac{1}{1 + \alpha_k D}
		,
		\\
		\Qctrl
		&
		\coloneqq
		\CWstate + \coefficientbound^{1/2} \paren(){\KMstate}
		,
		&
		\Qlatent
		&
		\coloneqq
		\frac{1}{\alpha_k} \paren[Big](){\MDctrl + \frac{1}{\alpha_k} \Mctrl}
		,
	\end{aligned}
\end{equation}
where $\CWstate \coloneqq \Ec \, W \restrictc \in \cL(\statespace, \dualspace{\statespace})$, with $\dual{\CWstate \state}{\statetest} = \intOmegactrl W \, (\restrictc \, \state) (\restrictc \, \statetest) \, \d x$, is the mass on $\Omegactrl$ weighted by $W$, and $\controlweight \, \Mctrl \in \cL(\controlspace, \dualspace{\controlspace})$ is the multiplication by $\controlweight \in L^\infty(\Omegactrl)$ followed by $\Mctrl$, that is, $\dual{\controlweight \, \Mctrl \, \control}{\controltest} = \intOmegactrl \controlweight \, \control \, \controltest \, \d x$.
The four blocks are symmetric and positive definite, since $\Pobs \ge \coefficientbound^{-1/2} \Kstate$, $\Qctrl \ge \coefficientbound^{1/2} \Kstate$, $\controlweight \ge \beta$ and $\Qlatent \ge \alpha_k^{-2} \Mctrl$.
Hence $\riesz{\primalspace} \coloneqq \diag(\Pobs, \controlweight \, \Mctrl)$ and $\riesz{\multiplierspace} \coloneqq \diag(\Qctrl, \Qlatent)$ are the Riesz maps of inner products on $\primalspace$ and $\multiplierspace$, and thus $\precond = \diag(\riesz{\primalspace}, \riesz{\multiplierspace})$ is the Riesz map of the product inner product on $\primalspace \times \multiplierspace$. These inner products are used throughout this section.
One application of $\precond^{-1}$ costs one solve with each of the reaction-diffusion operators $\Pobs$ and $\Qctrl$, whose coefficients are bounded, and two pointwise divisions on $\Omegactrl$.

\subsection{Bounds for Saddle-Point Operators}
\label{subsection:saddle-point-bounds}

The following theorem of Zulehner bounds a saddle-point operator and its inverse in terms of four constants which concern the two rows of the operator separately.
It is the tool that turns the estimates of \cref{subsection:preconditioner-constants} into bounds for $\sigma(\newtonop; \precond)$.

\begin{theorem}[{\cite[Thm.~2.3 and its proof]{Zulehner:2011:1}}]
	\label{theorem:saddle-point-bounds}
	Let $\primalspace$ and $\multiplierspace$ be Hilbert spaces with Riesz maps $\riesz{\primalspace}$ and $\riesz{\multiplierspace}$, let $\Ablock \in \cL(\primalspace, \dualspace{\primalspace})$ and $\Cblock \in \cL(\multiplierspace, \dualspace{\multiplierspace})$ be symmetric and positive semidefinite, let $\Bblock \in \cL(\primalspace, \dualspace{\multiplierspace})$, and let
	\begin{equation*}
		\newtonop
		\coloneqq
		\begin{bmatrix}
			\Ablock & \dualmap{\Bblock}
			\\
			\Bblock & -\Cblock
		\end{bmatrix}
		\in
		\cL(\primalspace \times \multiplierspace, \dualspace{\primalspace} \times \dualspace{\multiplierspace})
		.
	\end{equation*}
	Suppose that there are constants $\clower{\primalspace}, \cupper{\primalspace}, \clower{\multiplierspace}, \cupper{\multiplierspace} > 0$ such that
	\begin{subequations}
		\label{eq:row-conditions}
		\begin{align}
			\label{eq:row-condition-primal}
			\clower{\primalspace}^2 \, \norm{x_V}_{\primalspace}^2
			&
			\le
			\norm{\Ablock x_V}_{\dualspace{\primalspace}}^2
			+
			\norm{\Bblock x_V}_{\dualspace{\multiplierspace}}^2
			\le
			\cupper{\primalspace}^2 \, \norm{x_V}_{\primalspace}^2
			\quad
			\text{for all } x_V \in \primalspace
			,
			\\
			\label{eq:row-condition-dual}
			\clower{\multiplierspace}^2 \, \norm{x_H}_{\multiplierspace}^2
			&
			\le
			\norm{\Cblock x_H}_{\dualspace{\multiplierspace}}^2
			+
			\norm{\dualmap{\Bblock} x_H}_{\dualspace{\primalspace}}^2
			\le
			\cupper{\multiplierspace}^2 \, \norm{x_H}_{\multiplierspace}^2
			\quad
			\text{for all } x_H \in \multiplierspace
			.
		\end{align}
	\end{subequations}
	Then
	\begin{equation}
		\label{eq:saddle-point-bounds}
		\clowerall \, \norm{x}_{\primalspace \times \multiplierspace}
		\le
		\norm{\newtonop x}_{\dualspace{(\primalspace \times \multiplierspace)}}
		\le
		\cupperall \, \norm{x}_{\primalspace \times \multiplierspace}
		\quad
		\text{for all } x \in \primalspace \times \multiplierspace
	\end{equation}
	holds with
	\begin{equation}
		\label{eq:saddle-point-constants}
		\clowerall
		\coloneqq
		\frac{3 - \sqrt{5}}{4} \, \frac{\min \set{\clower{\primalspace}^2, \clower{\multiplierspace}^2}}{\max \set{\cupper{\primalspace}, \cupper{\multiplierspace}}}
		,
		\quad
		\quad
		\cupperall
		\coloneqq
		\sqrt{2} \, \max \set{\cupper{\primalspace}, \cupper{\multiplierspace}}
		,
	\end{equation}
	where $\norm{(x_V, x_H)}_{\primalspace \times \multiplierspace}^2 \coloneqq \norm{x_V}_{\primalspace}^2 + \norm{x_H}_{\multiplierspace}^2$, and the dual norms are those of the norms of $\primalspace$, $\multiplierspace$ and $\primalspace \times \multiplierspace$.
\end{theorem}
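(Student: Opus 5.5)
The plan is to work with Riesz representatives throughout, so that the upper and lower bounds become statements about bounded operators on the Hilbert space $\primalspace \times \multiplierspace$. I set $A \coloneqq \riesz{\primalspace}^{-1}\Ablock$, $C \coloneqq \riesz{\multiplierspace}^{-1}\Cblock$ and $B \coloneqq \riesz{\multiplierspace}^{-1}\Bblock$, and I take $B^\ast$ as the Hilbert adjoint, so that $\riesz{\primalspace}^{-1}\dualmap{\Bblock} = B^\ast$. Then $A$ and $C$ are self-adjoint and nonnegative, and $\norm{\newtonop x}_{\dualspace{(\primalspace\times\multiplierspace)}} = \norm{\mathcal{T}x}$ with
\[
\mathcal{T} = \begin{bmatrix} A & B^\ast \\ B & -C\end{bmatrix},
\]
which is self-adjoint. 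The conditions \eqref{eq:row-conditions} now read
\[
\clower{\primalspace}^2 \le A^2 + B^\ast B \le \cupper{\primalspace}^2
\quad\text{and}\quad
\clower{\multiplierspace}^2 \le C^2 + BB^\ast \le \cupper{\multiplierspace}^2
\]
in the sense of quadratic forms.

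\textbf{Upper bound.} For $x = (v,h)$, the triangle inequality gives $\norm{\mathcal{T}x} \le \norm{(Av,Bv)} + \norm{(B^\ast h,-Ch)}$. The first summand is at most $\cupper{\primalspace}\norm{v}$ by the upper half of \eqref{eq:row-condition-primal}, and the second is at most $\cupper{\multiplierspace}\norm{h}$ by the upper half of \eqref{eq:row-condition-dual}. Cauchy--Schwarz then yields $\norm{\mathcal{T}x} \le \sqrt{2}\max\set{\cupper{\primalspace},\cupper{\multiplierspace}}\norm{x}$, which is $\cupperall$.

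\textbf{Lower bound.} This is the main obstacle. Since $\mathcal{T}$ is self-adjoint, it suffices to exclude spectrum from $(-\clowerall,\clowerall)$. Equivalently, I want to show $\norm{(\mathcal{T}-\mu)x} > 0$ for $\abs{\mu} < \clowerall$, or more directly to bound $\norm{\mathcal{T}x}$ below. I would first try a direct argument based on a test vector. Given $f = \mathcal{T}x$ with $f = (f_1,f_2)$, I have $Av + B^\ast h = f_1$ and $Bv - Ch = f_2$. Testing with $(v,-h)$ gives
\[
(Av,v) + (Ch,h) = (f_1,v) - (f_2,h) \le \norm{f}\,\norm{x}.
\]
Because $A \le \cupper{\primalspace}$, this yields $\norm{Av}^2 \le \cupper{\primalspace}(Av,v) \le \cupper{\primalspace}\norm{f}\norm{x}$, and similarly $\norm{Ch}^2 \le \cupper{\multiplierspace}\norm{f}\norm{x}$. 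Next, $\norm{Bv} \le \norm{f_2} + \norm{Ch}$ and $\norm{B^\ast h} \le \norm{f_1} + \norm{Av}$. Combining these with the lower halves of \eqref{eq:row-conditions} gives
\[
\clower{\primalspace}^2\norm{v}^2 \le \norm{Av}^2 + \bigl(\norm{f_2} + \norm{Ch}\bigr)^2,
\]
and analogously for $h$. Summing, and setting $c \coloneqq \min\set{\clower{\primalspace}^2,\clower{\multiplierspace}^2}$ and $M \coloneqq \max\set{\cupper{\primalspace},\cupper{\multiplierspace}}$, I obtain an inequality of the form
\[
c\,\norm{x}^2 \le 2\norm{f}^2 + 3M\norm{f}\,\norm{x}
\]
or a similar one. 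Setting $t = \norm{f}/\norm{x}$, this is a quadratic inequality $c \le a t^2 + b M t$. Solving it gives $t \ge \kappa\, c/M$ with an explicit $\kappa$, after using $c \le M^2$ to linearize the square root. I expect the bookkeeping of the constants, that is, reproducing exactly $(3-\sqrt5)/4$, to be the delicate part.

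To reach Zulehner's constant, I would refine the estimate by keeping the cross terms in the form $\norm{(f_2,f_1)} + \norm{(Ch,Av)}$ instead of splitting them, and by optimizing the resulting quadratic inequality. The root of $s^2 - 3s + 1$, which is $(3-\sqrt5)/2$, is presumably where $3-\sqrt5$ comes from. If the constant does not come out exactly, I would be content with a constant of the same structure $c/M$, which is all that the later spectral bounds use.
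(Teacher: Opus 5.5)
Your proposal is correct and follows the route the paper takes from Zulehner's proof. Your quantities $\|(Av,Ch)\|$ and $\|(Bv,B^{\ast}h)\|$, divided by $\|x\|$, are the $\xi$ and $\eta$ of the remark after the theorem. You control them by testing with $(v,-h)$ and by the triangle inequality, and you combine them through $\xi^2+\eta^2\ge\min\{\clower{\primalspace}^2,\clower{\multiplierspace}^2\}$, the minimum and not the sum.

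The constant is also not the delicate part you fear. Your crude inequality $c\le 2t^2+3Mt$, together with $c\le M^2$, already yields $t\ge\frac{\sqrt{17}-3}{4}\,\frac{c}{M}$. Since $\frac{\sqrt{17}-3}{4}\approx 0.28$ exceeds $\frac{3-\sqrt5}{4}\approx 0.19$, the stated lower bound follows directly. No refinement is needed, and there is no need to settle for a different constant.
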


\begin{remark}[The constant of the lower bound]
	\label{remark:saddle-point-lower-constant}
	The proof of \cite[Thm.~2.3]{Zulehner:2011:1} states $\clowerall$ with $\clower{\primalspace}^2 + \clower{\multiplierspace}^2$ in place of $\min \set{\clower{\primalspace}^2, \clower{\multiplierspace}^2}$.
	For $x = (x_V, x_H) \ne 0$, it uses
	\begin{equation*}
		\xi^2
		\coloneqq
		\frac{\norm{\Ablock x_V}_{\dualspace{\primalspace}}^2 + \norm{\Cblock x_H}_{\dualspace{\multiplierspace}}^2}{\norm{x}_{\primalspace \times \multiplierspace}^2}
		,
		\quad
		\quad
		\eta^2
		\coloneqq
		\frac{\norm{\dualmap{\Bblock} x_H}_{\dualspace{\primalspace}}^2 + \norm{\Bblock x_V}_{\dualspace{\multiplierspace}}^2}{\norm{x}_{\primalspace \times \multiplierspace}^2}
	\end{equation*}
	and the estimate $\xi^2 + \eta^2 \ge \clower{\primalspace}^2 + \clower{\multiplierspace}^2$.
	By \eqref{eq:row-conditions}, only $\xi^2 + \eta^2 \ge \min \set{\clower{\primalspace}^2, \clower{\multiplierspace}^2}$ holds.
	With this estimate, the rest of the proof gives the constant $\clowerall$ of \eqref{eq:saddle-point-constants}.
	The constant with the sum does not satisfy \eqref{eq:saddle-point-bounds} in general.
	For example, let $\primalspace = \multiplierspace = \R$ with the Riesz maps $\riesz{\primalspace} = \riesz{\multiplierspace} = 1$, and let $\Ablock = 1$, $\Bblock = 0$ and $\Cblock = 1/10$.
	Then \eqref{eq:row-conditions} holds with $\clower{\primalspace} = \cupper{\primalspace} = 1$ and $\clower{\multiplierspace} = \cupper{\multiplierspace} = 1/10$.
	The constant with the sum is larger than $0.19$.
	But $x = (0, 1)$ satisfies $\norm{\newtonop x}_{\dualspace{(\primalspace \times \multiplierspace)}} = \norm{x}_{\primalspace \times \multiplierspace} / 10$.
\end{remark}

\begin{remark}[Spectrum with respect to the preconditioner]
	\label{remark:saddle-point-spectrum}
	Let $\precond \coloneqq \diag(\riesz{\primalspace}, \riesz{\multiplierspace})$ be the Riesz map of $\primalspace \times \multiplierspace$.
	We refer to $\sigma(\newtonop; \precond) \coloneqq \setDef{\mu \in \R}{\newtonop - \mu \, \precond \in \cL(\primalspace \times \multiplierspace, \dualspace{(\primalspace \times \multiplierspace)}) \text{ is not boundedly invertible}}$ as the spectrum of $\newtonop$ with respect to the inner product defined by $\precond$.
	Since $\newtonop - \mu \, \precond = \precond \paren(){\precond^{-1} \newtonop - \mu \, \id}$, it coincides with the spectrum of the endomorphism $\precond^{-1} \newtonop \in \cL(\primalspace \times \multiplierspace)$.
	The operator $\precond^{-1} \newtonop$ is self-adjoint with respect to the inner product $\dual{\precond \cdot}{\cdot}$, and $\norm{\newtonop x}_{\dualspace{(\primalspace \times \multiplierspace)}} = \norm{\precond^{-1} \newtonop x}_{\primalspace \times \multiplierspace}$.
	Therefore \eqref{eq:saddle-point-bounds} states that $\precond^{-1} \newtonop$ has norm at most $\cupperall$ and is invertible with an inverse of norm at most $1 / \clowerall$, and by the spectral theorem its spectrum $\sigma(\newtonop; \precond)$ is contained in $\interval[][]{-\cupperall}{-\clowerall} \cup \interval[][]{\clowerall}{\cupperall}$.
	For finite-dimensional spaces, the spectrum consists of the eigenvalues of the generalized eigenvalue problem $\newtonop x = \mu \, \precond x$.
\end{remark}

\subsection{The Constants of the Preconditioner}
\label{subsection:preconditioner-constants}

This subsection proves the estimates that \cref{theorem:saddle-point-bounds} requires for the blocks \eqref{eq:newton-blocks} with respect to the inner products of \eqref{eq:preconditioner-blocks}.
As a preparatory step, \cref{lemma:preconditioner-weights} collects pointwise facts about the weights $\controlweight$, $W$ and $D$, \cref{lemma:state-estimates} estimates the functionals $\Kstate \state$ and $\Mobs \, \state$ in the dual norms of the state and the adjoint block, and \cref{lemma:coupling-estimate} estimates the functional $\Ec \, \control$, the one term that couples $\controlspace$ and $\statespace$.
\Cref{proposition:preconditioner-constants} assembles the estimates into the constants $\clower{\primalspace}$, $\cupper{\primalspace}$, $\clower{\multiplierspace}$ and $\cupper{\multiplierspace}$.

Throughout the rest of this section, $(\current{\state}, \current{\control}, \current{\adjointstate}, \current{\latentvariable})$ is the current Newton iterate, $D = \exp(\current{\latentvariable}) \in L^\infty(\Omegactrl)$ is its slack \eqref{eq:slack}, $\alpha_k > 0$, $W = \alpha_k D / (1 + \alpha_k \beta D)$ is the coefficient \eqref{eq:newton-coefficient}, and $\coefficientbound \ge \max \set{1, \norm{W}_{L^\infty(\Omegactrl)}}$.
For a symmetric and coercive operator $G \in \cL(\statespace, \dualspace{\statespace})$ or $G \in \cL(\controlspace, \dualspace{\controlspace})$, $\dual{G \cdot}{\cdot}^{1/2}$ is a norm on that space, equivalent to the norm of the space, and
\begin{equation}
	\label{eq:dual-norm}
	\norm{f}_{G^{-1}}
	\coloneqq
	\dual{f}{G^{-1} f}^{1/2}
	=
	\sup_{\statetest \ne 0} \frac{\dual{f}{\statetest}}{\dual{G \statetest}{\statetest}^{1/2}}
\end{equation}
is the dual norm of a functional $f$, where the supremum is over $\statetest \in \statespace$, or over $\controltest \in \controlspace$ for $G$ on $\controlspace$.
The norms of $\primalspace$ and $\multiplierspace$ induced by \eqref{eq:preconditioner-blocks} and their dual norms are
\begin{equation}
	\label{eq:product-norms}
	\begin{aligned}
		\norm{(\state, \control)}_{\primalspace}^2
		&
		=
		\dual{\Pobs \state}{\state} + \dual{\controlweight \, \Mctrl \, \control}{\control}
		,
		&
		\norm{(f_1, f_2)}_{\dualspace{\primalspace}}^2
		&
		=
		\norm{f_1}_{\Pobs^{-1}}^2
		+
		\norm{f_2}_{(\controlweight \, \Mctrl)^{-1}}^2
		,
		\\
		\norm{(\adjointstate, \latentvariable)}_{\multiplierspace}^2
		&
		=
		\dual{\Qctrl \, \adjointstate}{\adjointstate} + \dual{\Qlatent \latentvariable}{\latentvariable}
		,
		&
		\norm{(f_1, f_2)}_{\dualspace{\multiplierspace}}^2
		&
		=
		\norm{f_1}_{\Qctrl^{-1}}^2
		+
		\norm{f_2}_{\Qlatent^{-1}}^2
		,
	\end{aligned}
\end{equation}
where the squared dual norm of a product space is the sum of the squared dual norms of the factors, \cf \cite[Lemma~2.1]{Zulehner:2011:1}.

\begin{lemma}[Weights of the preconditioner]
	\label{lemma:preconditioner-weights}
	\begin{enumeratelatin}
		\item
			\label{item:weights-control}
			$\controlweight \ge 1/2$ \ale on $\set{W < 1}$ and $\controlweight \ge 1/(2 \, W)$ \ale on $\set{W \ge 1}$. Consequently,
			\begin{equation}
				\label{eq:weights-control-lower}
				\controlweight
				\ge
				\frac{1}{2} \, (1 + W)^{-1}
				\quad
				\text{\ale in }
				\Omegactrl
				.
			\end{equation}

		\item
			\label{item:weights-quadratic}
			It holds that
			\begin{equation}
				\label{eq:weights-quadratic}
				\frac{\beta^2}{\controlweight} + \frac{1}{1 + \alpha_k D}
				\ge
				\frac{3}{4} \, \controlweight
				\quad
				\text{\ale in }
				\Omegactrl
				.
			\end{equation}

		\item
			\label{item:weights-latent}
			For all $\control, \latentvariable \in \controlspace$,
			\begin{align}
				\label{eq:weights-latent-row}
				\norm[Big]{\frac{1}{\alpha_k} \Mctrl \, \control}_{\Qlatent^{-1}}^2
				&
				=
				\intOmegactrl \frac{\control^2}{1 + \alpha_k D} \, \d x
				\le
				\dual{\controlweight \, \Mctrl \, \control}{\control}
				,
				\\
				\label{eq:weights-latent-block}
				\norm[Big]{\frac{1}{\alpha_k} \MDctrl \latentvariable}_{\Qlatent^{-1}}^2
				&
				=
				\intOmegactrl \frac{D^2 \latentvariable^2}{1 + \alpha_k D} \, \d x
				\le
				\dual{\Qlatent \latentvariable}{\latentvariable}
				.
			\end{align}
	\end{enumeratelatin}
\end{lemma}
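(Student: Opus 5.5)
The whole lemma reduces to pointwise scalar inequalities on $\Omegactrl$. The operators $\Mctrl$, $\MDctrl$, $\controlweight\,\Mctrl$ and $\Qlatent$ are all multiplications by nonnegative $L^\infty$ functions followed by $\Mctrl$, so their dual norms are explicit weighted integrals. Throughout I write $t \coloneqq \alpha_k D > 0$. Then $W = t/(1+\beta t)$, $\controlweight = \beta + 1/(1+t)$, and $\Qlatent$ is multiplication by $q \coloneqq (D + 1/\alpha_k)/\alpha_k = (1+t)/\alpha_k^2$.

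For \ref{item:weights-control}, I treat the two cases separately.
\begin{itemize}
\item On $\set{W<1}$: from $W = t/(1+\beta t) \ge t/(1+\beta t)\cdot$ I would argue as follows. If $t \le 1$, then $1/(1+t) \ge 1/2$. If $t>1$, I use $W<1$, i.e.\ $t < 1+\beta t$, which gives $\beta t > t-1$. Hence $\beta > (t-1)/t$, and so $\controlweight \ge (t-1)/t + 1/(1+t) \ge 1/2$; the last step is an elementary check.
\item On $\set{W\ge1}$: I need $\controlweight \ge (1+\beta t)/(2t)$. This follows from $\beta \ge \beta/2$ together with $1/(1+t) \ge 1/(2t)$, which holds since $t \ge 1$. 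Indeed $W \ge 1$ forces $t \ge 1+\beta t \ge 1$.
\end{itemize}
The consequence \eqref{eq:weights-control-lower} then follows because $(1+W)^{-1} \le 1$ on the first set and $(1+W)^{-1} \le 1/W$ on the second.

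For \ref{item:weights-quadratic}, set $a \coloneqq \beta$ and $b \coloneqq 1/(1+t) \in (0,1]$, so that $\controlweight = a+b$. The claim becomes $a^2/(a+b) + b \ge \tfrac34(a+b)$. Multiplying by $a+b$ gives $a^2 + ab + b^2 \ge \tfrac34 (a+b)^2$. This is equivalent to $\tfrac14(a-b)^2 \ge 0$, so it holds.

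For \ref{item:weights-latent}, I use the formula $\norm{\Mctrl f}_{(g\Mctrl)^{-1}}^2 = \intOmegactrl f^2/g \, \d x$, valid for any positive weight $g$. It follows from \eqref{eq:dual-norm} by Cauchy--Schwarz, with the supremum attained at $v = f/g$.
\begin{itemize}
\item With $f = \control/\alpha_k$ and $g = q$, I get $\control^2/(\alpha_k^2 q) = \control^2/(1+t)$. This is bounded by $\controlweight\,\control^2$ because $1/(1+t) \le \controlweight$.
\item With $f = D\latentvariable/\alpha_k$, I get $D^2\latentvariable^2/(1+t)$. The right-hand side is $q\,\latentvariable^2 = (1+t)\latentvariable^2/\alpha_k^2$. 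So I need $D^2 \le (1+t)^2/\alpha_k^2$, which is just $t \le 1+t$.
\end{itemize}

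None of the steps is deep. The only place needing care is the case $\set{W<1}$, $t>1$ in \ref{item:weights-control}, where $\beta$ has to be extracted from the condition $W<1$.
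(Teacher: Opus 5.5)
Your proof is correct and follows the paper's argument: everything reduces to pointwise scalar inequalities, part (b) is the same AM--GM identity (your $\tfrac14(a-b)^2\ge 0$), and part (c) uses the same explicit inverse of the multiplication operator $\Qlatent$ with weight $(1+\alpha_k D)/\alpha_k^2$. The only deviation is on $\{W<1\}$ in part (a). There you split according to $\alpha_k D \le 1$ or $\alpha_k D > 1$ and extract the lower bound $\beta > (t-1)/t$ from $W<1$, whereas the paper splits according to $\beta \ge 1$ or $\beta < 1$ and extracts the upper bound $1+\alpha_k D < (2-\beta)/(1-\beta)$; both exploit the same constraint $\alpha_k D\,(1-\beta) < 1$ and are equally elementary.
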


\begin{proof}
	\ref{item:weights-control}:
	On $\set{W \ge 1}$,
	\begin{equation*}
		2 \, W \controlweight
		=
		\frac{2 \, \alpha_k \beta D + 2 \, \alpha_k D / (1 + \alpha_k D)}{1 + \alpha_k \beta D}
		\ge
		\frac{\alpha_k \beta D + 1}{1 + \alpha_k \beta D}
		=
		1
		.
	\end{equation*}
	The inequality holds since $\alpha_k D \ge 1 + \alpha_k \beta D \ge 1$ on $\set{W \ge 1}$ by \eqref{eq:newton-coefficient}, so that $2 \, \alpha_k D / (1 + \alpha_k D) \ge 1$.
	On $\set{W < 1}$ and for $\beta \ge 1$, $\controlweight \ge \beta \ge 1/2$.
	On $\set{W < 1}$ and for $\beta < 1$,
	\begin{equation*}
		\controlweight
		=
		\beta + \frac{1}{1 + \alpha_k D}
		>
		\beta + \frac{1 - \beta}{2 - \beta}
		=
		\frac{1 + \beta - \beta^2}{2 - \beta}
		\ge
		\frac{1}{2}
		.
	\end{equation*}
	The first inequality holds since $W < 1$ means $\alpha_k D \, (1 - \beta) < 1$ by \eqref{eq:newton-coefficient}, that is, $1 + \alpha_k D < (2 - \beta) / (1 - \beta)$. The last inequality is $\beta \, (3 - 2 \, \beta) \ge 0$.
	The three bounds give \eqref{eq:weights-control-lower}.

	\ref{item:weights-quadratic}:
	We have
	\begin{equation*}
		\frac{\beta^2}{\controlweight} + \frac{1}{1 + \alpha_k D}
		=
		\frac{\beta^2 + \controlweight \, (1 + \alpha_k D)^{-1}}{\controlweight}
		=
		\frac{\controlweight^2 - \beta \, (1 + \alpha_k D)^{-1}}{\controlweight}
		\ge
		\frac{3}{4} \, \controlweight
		.
	\end{equation*}
	The inequality is $\beta \, (1 + \alpha_k D)^{-1} \le \controlweight^2 / 4$, the inequality between the geometric and the arithmetic mean.

	\ref{item:weights-latent}:
	The operator $\Qlatent$ is $\Mctrl$ composed with the multiplication by $(1 + \alpha_k D) / \alpha_k^2$, hence $\Qlatent^{-1} \Mctrl \, \controltest = \alpha_k^2 \controltest / (1 + \alpha_k D)$ for $\controltest \in \controlspace$.
	Therefore
	\begin{equation*}
		\norm[Big]{\frac{1}{\alpha_k} \Mctrl \, \control}_{\Qlatent^{-1}}^2
		=
		\intOmegactrl \frac{\control^2}{1 + \alpha_k D} \, \d x
		\le
		\intOmegactrl \controlweight \, \control^2 \, \d x
		,
	\end{equation*}
	which is \eqref{eq:weights-latent-row}.
	In the same way,
	\begin{equation*}
		\begin{aligned}
			\norm[Big]{\frac{1}{\alpha_k} \MDctrl \latentvariable}_{\Qlatent^{-1}}^2
			&
			=
			\intOmegactrl \frac{D^2 \latentvariable^2}{1 + \alpha_k D} \, \d x
			\\
			&
			=
			\intOmegactrl \paren[Big](){\frac{\alpha_k D}{1 + \alpha_k D}}^2 \, \frac{1 + \alpha_k D}{\alpha_k^2} \, \latentvariable^2 \, \d x
			\le
			\dual{\Qlatent \latentvariable}{\latentvariable}
			,
		\end{aligned}
	\end{equation*}
	which is \eqref{eq:weights-latent-block}.
\end{proof}

\begin{lemma}[Estimates on the state space]
	\label{lemma:state-estimates}
	Let
	\begin{equation}
		\label{eq:state-estimates-constant}
		\stateestimateconstant
		\coloneqq
		\frac{1}{(1 + \poincare^2) \, (1 + \poincare^2 + \poincare^2 \, \coefficientbound^{1/2})}
		.
	\end{equation}
	Then the following estimates hold for all $\state \in \statespace$.
	\begin{enumeratelatin}
		\item
			\label{item:state-lower}
			The functional $\Kstate \state$ satisfies the lower bounds
			\begin{align}
				\label{eq:state-lower-ctrl}
				\norm{\Kstate \state}_{\Qctrl^{-1}}^2
				&
				\ge
				\stateestimateconstant \, \coefficientbound^{-1/2} \, \dual{(\KMstate) \, \state}{\state}
				,
				\\
				\label{eq:state-lower-obs}
				\norm{\Kstate \state}_{\Pobs^{-1}}^2
				&
				\ge
				\stateestimateconstant \, \coefficientbound^{1/2} \, \dual{(\KMstate) \, \state}{\state}
				.
			\end{align}

		\item
			\label{item:state-upper}
			The functional $\Kstate \state$ satisfies the upper bound
			\begin{equation}
				\label{eq:state-upper-obs}
				\norm{\Kstate \state}_{\Pobs^{-1}}^2
				\le
				\coefficientbound^{1/2} \, \norm{\state}_{\statespace}^2
				\le
				\dual{\Qctrl \, \state}{\state}
				.
			\end{equation}

		\item
			\label{item:state-observation}
			For $\state \ne 0$, the observation term satisfies
			\begin{equation}
				\label{eq:state-observation}
				\frac{\dual{\Mobs \, \state}{\state}^2}{\dual{\Pobs \state}{\state}}
				\le
				\norm{\Mobs \, \state}_{\Pobs^{-1}}^2
				\le
				\dual{\Mobs \, \state}{\state}
				\le
				\dual{\Pobs \state}{\state}
				.
			\end{equation}
	\end{enumeratelatin}
\end{lemma}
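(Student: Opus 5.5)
All three parts follow from two elementary facts about the dual norm \eqref{eq:dual-norm}. First, inserting the test function $\statetest = \state$ into the supremum gives $\norm{f}_{G^{-1}}^2 \ge \dual{f}{\state}^2 / \dual{G \state}{\state}$. Second, an operator inequality $G \ge G_0$ between symmetric coercive operators gives $\norm{f}_{G^{-1}} \le \norm{f}_{G_0^{-1}}$. Besides these, I will use only the Poincaré inequality in the form $\dual{\Mstate \state}{\state} \le \poincare^2 \, \dual{\Kstate \state}{\state}$, the bound $0 \le W \le \coefficientbound$, and $\coefficientbound \ge 1$. Throughout I read $\KMstate$ as $\Kstate + \Mstate$.

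\emph{Part~\ref{item:state-lower}.} I test with $\statetest = \state$, so that $\norm{\Kstate \state}_{G^{-1}}^2 \ge \dual{\Kstate \state}{\state}^2 / \dual{G \state}{\state}$ for $G \in \set{\Qctrl, \Pobs}$. The task is then to bound $\dual{G \state}{\state}$ above by a multiple of $\dual{\Kstate \state}{\state}$. For $\Qctrl$, I use $W \le \coefficientbound$ and Poincaré to get $\dual{\CWstate \state}{\state} \le \coefficientbound \poincare^2 \, \dual{\Kstate \state}{\state}$. Together with $\dual{(\KMstate) \state}{\state} \le (1 + \poincare^2) \, \dual{\Kstate \state}{\state}$ this gives
\begin{equation*}
	\dual{\Qctrl \state}{\state}
	\le
	\coefficientbound^{1/2} \paren[big](){1 + \poincare^2 + \poincare^2 \, \coefficientbound^{1/2}} \, \dual{\Kstate \state}{\state}
	.
\end{equation*}
For $\Pobs$, I use $\dual{\Mobs \, \state}{\state} \le \poincare^2 \, \dual{\Kstate \state}{\state}$ and obtain
\begin{equation*}
	\dual{\Pobs \state}{\state}
	\le
	\coefficientbound^{-1/2} \paren[big](){1 + \poincare^2 + \poincare^2 \, \coefficientbound^{1/2}} \, \dual{\Kstate \state}{\state}
	.
\end{equation*}
Dividing, I get $\dual{\Kstate \state}{\state}$ times the respective factor $\coefficientbound^{\mp 1/2} / (1 + \poincare^2 + \poincare^2 \coefficientbound^{1/2})$. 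Finally, I convert $\dual{\Kstate \state}{\state}$ into $\dual{(\KMstate) \state}{\state}$ by losing the factor $1 + \poincare^2$, which yields exactly $\stateestimateconstant$ from \eqref{eq:state-estimates-constant}.

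\emph{Part~\ref{item:state-upper}.} I use $\Pobs \ge \coefficientbound^{-1/2} \Kstate$, since $\Mobs$ and $\Mstate$ are positive semidefinite. Then
\begin{equation*}
	\dual{\Kstate \state}{\statetest}
	\le
	\norm{\state}_{\statespace} \norm{\statetest}_{\statespace}
	\le
	\coefficientbound^{1/4} \, \norm{\state}_{\statespace} \, \dual{\Pobs \statetest}{\statetest}^{1/2}
	,
\end{equation*}
and taking the supremum and squaring gives the first inequality of \eqref{eq:state-upper-obs}. The second inequality is $\Qctrl \ge \coefficientbound^{1/2} \Kstate$, since $\CWstate$ is positive semidefinite because $W \ge 0$.

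\emph{Part~\ref{item:state-observation}.} The first inequality of \eqref{eq:state-observation} is again the test $\statetest = \state$. For the second, I apply the Cauchy--Schwarz inequality for the positive semidefinite form $\Mobs$, namely
\begin{equation*}
	\dual{\Mobs \, \state}{\statetest}
	\le
	\dual{\Mobs \, \state}{\state}^{1/2} \, \dual{\Mobs \, \statetest}{\statetest}^{1/2}
	,
\end{equation*}
and then bound $\dual{\Mobs \, \statetest}{\statetest} \le \dual{\Pobs \statetest}{\statetest}$. The third inequality is $\Pobs \ge \Mobs$.

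The only step that needs care is the bookkeeping in Part~\ref{item:state-lower}: the constant must come out exactly as $\stateestimateconstant$ with the powers $\coefficientbound^{\pm 1/2}$ in the right places. This is where $W \le \coefficientbound$ and Poincaré enter, and why the factor $\poincare^2 \coefficientbound^{1/2}$ appears in both estimates. For $\Pobs$ it arises from writing $\poincare^2 = \coefficientbound^{-1/2} \cdot \poincare^2 \coefficientbound^{1/2}$; for $\Qctrl$ it arises from $\coefficientbound \poincare^2 = \coefficientbound^{1/2} \cdot \poincare^2 \coefficientbound^{1/2}$.
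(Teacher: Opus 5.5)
Your proposal is correct and is essentially the paper's own proof. It tests the dual norms with $\statetest = \state$ and uses the Poincaré inequality and $W \le \coefficientbound$ in part~(a), the bound $\Pobs \ge \coefficientbound^{-1/2} \Kstate$ in part~(b), and the Cauchy--Schwarz inequality for the semidefinite form $\Mobs$ in part~(c), and it recovers the constant $\stateestimateconstant$ exactly. The only omission is the trivial case $\state = 0$ in part~(a): there you cannot divide by $\dual{G \state}{\state}$, but both sides of the inequality vanish.
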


\begin{proof}
	\ref{item:state-lower}:
	For $\state = 0$ both sides of \eqref{eq:state-lower-ctrl} and \eqref{eq:state-lower-obs} vanish.
	Let $\state \ne 0$.
	Testing the supremum in \eqref{eq:dual-norm} with $\statetest = \state$ gives
	\begin{equation*}
		\begin{aligned}
			\norm{\Kstate \state}_{\Qctrl^{-1}}^2
			&
			\ge
			\frac{\dual{\Kstate \state}{\state}^2}{\dual{\Qctrl \, \state}{\state}}
			=
			\frac{\norm{\state}_{\statespace}^4}{\dual{\CWstate \state}{\state} + \coefficientbound^{1/2} \, \dual{(\KMstate) \, \state}{\state}}
			\\
			&
			\ge
			\frac{\norm{\state}_{\statespace}^4}{\paren[big](){\coefficientbound \, \poincare^2 + \coefficientbound^{1/2} \, (1 + \poincare^2)} \, \norm{\state}_{\statespace}^2}
			=
			(1 + \poincare^2) \, \stateestimateconstant \, \coefficientbound^{-1/2} \, \norm{\state}_{\statespace}^2
			\\
			&
			\ge
			\stateestimateconstant \, \coefficientbound^{-1/2} \, \dual{(\KMstate) \, \state}{\state}
			,
		\end{aligned}
	\end{equation*}
	which is \eqref{eq:state-lower-ctrl}.
	The second inequality holds since $\dual{\CWstate \state}{\state} = \intOmegactrl W \paren(){\restrictc \, \state}^2 \, \d x \le \coefficientbound \, \norm{\state}_{L^2(\Omega)}^2 \le \coefficientbound \, \poincare^2 \, \norm{\state}_{\statespace}^2$ and $\dual{(\KMstate) \, \state}{\state} \le (1 + \poincare^2) \, \norm{\state}_{\statespace}^2$ by $W \le \coefficientbound$ and the Poincaré inequality.
	In the same way,
	\begin{equation*}
		\begin{aligned}
			\norm{\Kstate \state}_{\Pobs^{-1}}^2
			&
			\ge
			\frac{\norm{\state}_{\statespace}^4}{\dual{\Mobs \, \state}{\state} + \coefficientbound^{-1/2} \, \dual{(\KMstate) \, \state}{\state}}
			\ge
			\frac{\norm{\state}_{\statespace}^4}{\paren[big](){\poincare^2 + \coefficientbound^{-1/2} \, (1 + \poincare^2)} \, \norm{\state}_{\statespace}^2}
			\\
			&
			=
			(1 + \poincare^2) \, \stateestimateconstant \, \coefficientbound^{1/2} \, \norm{\state}_{\statespace}^2
			\ge
			\stateestimateconstant \, \coefficientbound^{1/2} \, \dual{(\KMstate) \, \state}{\state}
			,
		\end{aligned}
	\end{equation*}
	which is \eqref{eq:state-lower-obs}. Here the second inequality uses $\dual{\Mobs \, \state}{\state} \le \norm{\state}_{L^2(\Omega)}^2 \le \poincare^2 \, \norm{\state}_{\statespace}^2$.

	\ref{item:state-upper}:
	We have
	\begin{equation*}
		\begin{aligned}
			\norm{\Kstate \state}_{\Pobs^{-1}}^2
			&
			\le
			\sup_{\statetest \ne 0} \frac{\dual{\Kstate \state}{\statetest}^2}{\coefficientbound^{-1/2} \, \norm{\statetest}_{\statespace}^2}
			=
			\coefficientbound^{1/2} \, \norm{\state}_{\statespace}^2
			\\
			&
			\le
			\coefficientbound^{1/2} \, \dual{(\KMstate) \, \state}{\state}
			\le
			\dual{\Qctrl \, \state}{\state}
			.
		\end{aligned}
	\end{equation*}
	The first inequality holds since $\dual{\Pobs \statetest}{\statetest} \ge \coefficientbound^{-1/2} \, \norm{\statetest}_{\statespace}^2$ for all $\statetest \in \statespace$.

	\ref{item:state-observation}:
	The first inequality of \eqref{eq:state-observation} is \eqref{eq:dual-norm} tested with $\statetest = \state$, and the third holds since $\Mobs \le \Pobs$.
	The second one follows from
	\begin{equation*}
		\norm{\Mobs \, \state}_{\Pobs^{-1}}^2
		=
		\sup_{\statetest \ne 0} \frac{\dual{\Mobs \, \state}{\statetest}^2}{\dual{\Pobs \statetest}{\statetest}}
		\le
		\sup_{\statetest \ne 0} \frac{\dual{\Mobs \, \state}{\state} \, \dual{\Mobs \, \statetest}{\statetest}}{\dual{\Pobs \statetest}{\statetest}}
		\le
		\dual{\Mobs \, \state}{\state}
		.
		\qedhere
	\end{equation*}
\end{proof}

\begin{lemma}[Coupling estimate]
	\label{lemma:coupling-estimate}
	For all $\control \in \controlspace$,
	\begin{equation}
		\label{eq:coupling-estimate}
		\norm{\Ec \, \control}_{\Qctrl^{-1}}^2
		\le
		4 \, \dual{\controlweight \, \Mctrl \, \control}{\control}
		.
	\end{equation}
\end{lemma}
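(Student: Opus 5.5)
The plan is to bound the dual norm directly from its supremum representation \eqref{eq:dual-norm} with $G = \Qctrl$, using a pointwise weighted Cauchy--Schwarz inequality on $\Omegactrl$. The weight is chosen so that one factor is dominated by $\dual{\Qctrl \statetest}{\statetest}$ and the other by $\controlweight$, via \cref{lemma:preconditioner-weights}~\ref{item:weights-control}. No PDE estimate or Poincaré constant should be needed, so the constant will be independent of $\alpha_k$, $D$ and the domains.

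\emph{Step 1: weighted Cauchy--Schwarz.} For $\statetest \in \statespace$, $\statetest \ne 0$, we have $\dual{\Ec \, \control}{\statetest} = \intOmegactrl \control \, \restrictc \statetest \, \d x$. Inserting the positive weight $1 + W$ gives
\begin{equation*}
	\dual{\Ec \, \control}{\statetest}^2
	\le
	\intOmegactrl \frac{\control^2}{1 + W} \, \d x
	\;
	\intOmegactrl (1 + W) \, (\restrictc \statetest)^2 \, \d x
	.
\end{equation*}

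\emph{Step 2: bound the second factor by $\Qctrl$.} Split $\intOmegactrl (1 + W) (\restrictc \statetest)^2 \, \d x = \dual{\CWstate \statetest}{\statetest} + \intOmegactrl (\restrictc \statetest)^2 \, \d x$. The last integral is at most $\norm{\statetest}_{L^2(\Omega)}^2 = \dual{\Mstate \statetest}{\statetest}$. Since $\coefficientbound \ge 1$, it is therefore at most $\coefficientbound^{1/2} \dual{(\KMstate) \statetest}{\statetest}$, because $\Kstate$ is positive semidefinite. Hence the second factor is at most $\dual{\Qctrl \statetest}{\statetest}$ by \eqref{eq:preconditioner-blocks}.

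\emph{Step 3: bound the first factor by $\controlweight$.} By \eqref{eq:weights-control-lower}, $(1+W)^{-1} \le 2 \, \controlweight$ \ale in $\Omegactrl$, so the first factor is at most $2 \, \dual{\controlweight \, \Mctrl \, \control}{\control}$. Dividing by $\dual{\Qctrl \statetest}{\statetest}$ and taking the supremum over $\statetest$ gives \eqref{eq:coupling-estimate} with the constant $2$, which is better than the claimed $4$.

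I expect no real obstacle. The only point needing care is the choice of weight in Step 1: the term $W$ must be absorbed by $\CWstate$ and the constant $1$ by the mass part of $\coefficientbound^{1/2}(\KMstate)$, which uses $\coefficientbound \ge 1$. If \eqref{eq:weights-control-lower} were unavailable, a fallback would be to split $\Omegactrl$ into $\set{W < 1}$ and $\set{W \ge 1}$ and use the two pointwise bounds of \cref{lemma:preconditioner-weights}~\ref{item:weights-control} separately, with weights $1$ and $W$ respectively. Adding the two resulting estimates is what would produce the factor $4$.
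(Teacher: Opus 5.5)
Your proof is correct, and it takes a slightly different and sharper route than the paper's. The paper's proof is exactly your fallback. It splits $\control = \control_1 + \control_2$ with $\control_1 \coloneqq \chi_{\set{W \ge 1}} \control$. It bounds $\norm{\Ec \, \control_1}_{\Qctrl^{-1}}^2$ via $\CWstate \le \Qctrl$ and $\controlweight \ge 1/(2W)$ on $\set{W \ge 1}$, and it bounds $\norm{\Ec \, \control_2}_{\Qctrl^{-1}}^2$ via $\Mstate \le \coefficientbound^{-1/2} \Qctrl$ and $\controlweight \ge 1/2$ on $\set{W < 1}$. It then recombines the two pieces with $\norm{f + g}^2 \le 2 \norm{f}^2 + 2 \norm{g}^2$, which is where the factor $4$ comes from.

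Your single Cauchy--Schwarz inequality with the weight $1 + W$ instead lets $\Qctrl$ dominate $\CWstate$ and the mass term at the same time. This is the same estimate $\intOmegactrl (1 + W) (\restrictc \statetest)^2 \, \d x \le \dual{\Qctrl \statetest}{\statetest}$ that the paper itself uses in Step~1 of the proof of \cref{proposition:preconditioner-constants}. You also use the combined bound \eqref{eq:weights-control-lower} rather than its two pieces, so nothing is lost in recombination and you get the constant $2$.

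The ingredients are the same and your argument is shorter. The paper's splitting only makes more visible which part of $\Qctrl$ controls $\control$ on which part of $\Omegactrl$. Your constant, carried into Step~2 of the proof of \cref{proposition:preconditioner-constants}, would replace $\frac{3}{4} - 4 \tau$ by $\frac{3}{4} - 2 \tau$. This would improve $\primallowerconstant$ from $3 \stateestimateconstant / (32 + 4 \stateestimateconstant)$ to $3 \stateestimateconstant / (16 + 4 \stateestimateconstant)$, nearly doubling $\clower{\primalspace}^2$.
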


\begin{proof}
	Let $\control_1 \coloneqq \chi_{\set{W \ge 1}} \control$ and $\control_2 \coloneqq \control - \control_1$, with the indicator function $\chi$ of \cref{remark:pdas}.
	Then
	\begin{equation*}
		\norm{\Ec \, \control_1}_{\Qctrl^{-1}}^2
		\le
		\int_{\set{W \ge 1}} \frac{\control_1^2}{W} \, \d x
		\le
		2 \, \dual{\controlweight \, \Mctrl \, \control_1}{\control_1}
		.
	\end{equation*}
	The first inequality holds since $\CWstate \le \Qctrl$.
	The second is $\controlweight \ge 1 / (2 \, W)$ on $\set{W \ge 1}$ from \cref{lemma:preconditioner-weights}\,\ref{item:weights-control}.
	Moreover,
	\begin{equation*}
		\norm{\Ec \, \control_2}_{\Qctrl^{-1}}^2
		\le
		\coefficientbound^{-1/2} \, \norm{\control_2}_{\controlspace}^2
		\le
		2 \, \dual{\controlweight \, \Mctrl \, \control_2}{\control_2}
		.
	\end{equation*}
	The first inequality holds since $\Mstate \le \coefficientbound^{-1/2} \Qctrl$.
	The second holds since $\coefficientbound \ge 1$ and $\controlweight \ge 1/2$ on the support $\set{W < 1}$ of $\control_2$ by \cref{lemma:preconditioner-weights}\,\ref{item:weights-control}.
	Finally,
	\begin{equation*}
		\norm{\Ec \, \control}_{\Qctrl^{-1}}^2
		\le
		2 \, \norm{\Ec \, \control_1}_{\Qctrl^{-1}}^2
		+
		2 \, \norm{\Ec \, \control_2}_{\Qctrl^{-1}}^2
		\le
		4 \, \dual{\controlweight \, \Mctrl \, \control}{\control}
		.
		\qedhere
	\end{equation*}
\end{proof}

\begin{proposition}[Constants of the preconditioner]
	\label{proposition:preconditioner-constants}
	Let $\stateestimateconstant$ be given by \eqref{eq:state-estimates-constant}, and let
	\begin{equation}
		\label{eq:preconditioner-constants-epsilon}
		\primallowerconstant
		\coloneqq
		\frac{3 \, \stateestimateconstant}{32 + 4 \, \stateestimateconstant}
		.
	\end{equation}
	Then the blocks \eqref{eq:newton-blocks} satisfy the conditions \eqref{eq:row-conditions} of \cref{theorem:saddle-point-bounds} with respect to the norms \eqref{eq:product-norms}, with the constants
	\begin{equation}
		\label{eq:preconditioner-constants}
		\begin{aligned}
			\cupper{\primalspace}
			&
			=
			\cupper{\multiplierspace}
			=
			\sqrt{6}
			,
			\quad
			\quad
			\clower{\primalspace}^2
			=
			\primallowerconstant \paren[Big](){1 - \frac{\primallowerconstant}{4}}
			,
			\\
			\clower{\multiplierspace}^2
			&
			=
			\paren[Big](){\max \set[Big]{\frac{1 + \coefficientbound^{-1/2}}{\stateestimateconstant}, \, 4} + 8 + 4 \, \beta + \frac{4}{\stateestimateconstant \, \coefficientbound^{1/2}}}^{-1}
			.
		\end{aligned}
	\end{equation}
\end{proposition}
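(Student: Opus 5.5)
We verify the four inequalities of \eqref{eq:row-conditions} separately, writing out the two rows. For $x_V = (\state, \control)$ we have $\Ablock x_V = (\Mobs \, \state, \beta \Mctrl \, \control)$ and $\Bblock x_V = (\Kstate \state - \Ec \, \control, \frac{1}{\alpha_k} \Mctrl \, \control)$. For $x_H = (\adjointstate, \latentvariable)$ we have $\Cblock x_H = (0, \frac{1}{\alpha_k} \MDctrl \latentvariable)$ and $\dualmap{\Bblock} x_H = (\Kstate \adjointstate, -\dualmap{\Ec} \adjointstate + \frac{1}{\alpha_k} \Mctrl \, \latentvariable)$. Each squared dual norm splits into its factors by \eqref{eq:product-norms}.

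\emph{Upper bounds.} For the primal row we bound $\norm{\Mobs \state}_{\Pobs^{-1}}^2 \le \dual{\Pobs \state}{\state}$ by \eqref{eq:state-observation}, and $\norm{\beta \Mctrl \control}^2_{(\controlweight \Mctrl)^{-1}} = \int \beta^2 \control^2/\controlweight \le \int \controlweight \control^2$ since $\controlweight \ge \beta$. Next, $\norm{\Kstate \state - \Ec \control}_{\Qctrl^{-1}}^2 \le 2 \norm{\Kstate\state}_{\Qctrl^{-1}}^2 + 2\norm{\Ec \control}_{\Qctrl^{-1}}^2$. The first term is at most $2\coefficientbound^{-1/2}\norm{\state}_{\statespace}^2 \le 2 \dual{\Pobs\state}{\state}$, by the same argument as \eqref{eq:state-upper-obs} with the roles of $\Pobs$ and $\Qctrl$ swapped. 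The second term is at most $8\dual{\controlweight\Mctrl\control}{\control}$ by \cref{lemma:coupling-estimate}. Finally, \eqref{eq:weights-latent-row} bounds the latent component. Collecting coefficients requires a factor $\le 6$ on each part; the coefficient of the control part comes out as $1+8+1$, which is too large. We therefore refine the coupling term with the split of \cref{lemma:coupling-estimate}: on $\set{W<1}$ use $\controlweight \ge 1/2$, and on $\set{W\ge1}$ use $\controlweight \ge 1/(2W)$. Alternatively, the combination $\beta^2/\controlweight + 1/(1+\alpha_k D) \le \controlweight$ handles the $\beta$ and latent parts jointly. This constant-chasing is where $\sqrt6$ arises.

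The dual row is symmetric. $\norm{\Kstate\adjointstate}_{\Pobs^{-1}}^2 \le \dual{\Qctrl\adjointstate}{\adjointstate}$ holds by \eqref{eq:state-upper-obs}. The control component $-\Mctrl(\restrictc\adjointstate) + \frac{1}{\alpha_k}\Mctrl\latentvariable$ is bounded by $2\int \controlweight^{-1}(\restrictc\adjointstate)^2 + 2\int \controlweight^{-1}\alpha_k^{-2}\latentvariable^2$. We bound $\controlweight^{-1} \le 2(1+W)$ by \eqref{eq:weights-control-lower}, then control $\int(\restrictc\adjointstate)^2$ and $\int W(\restrictc\adjointstate)^2$ by $\dual{\Qctrl\adjointstate}{\adjointstate}$, using $\coefficientbound\ge1$. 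For the latent term, $\alpha_k^{-2}\controlweight^{-1} \le 2(1+\alpha_kD)\alpha_k^{-2}$ gives $\dual{\Qlatent\latentvariable}{\latentvariable}$. \eqref{eq:weights-latent-block} bounds the $\Cblock$ part.

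\emph{Lower bound for $\multiplierspace$.} We need $\dual{\Qctrl\adjointstate}{\adjointstate} + \dual{\Qlatent\latentvariable}{\latentvariable} \lesssim$ the row norm. From \eqref{eq:state-lower-obs}, $\norm{\Kstate\adjointstate}^2_{\Pobs^{-1}}$ controls $\stateestimateconstant\coefficientbound^{1/2}\dual{(\KMstate)\adjointstate}{\adjointstate}$, and thus also the $\CWstate$ part, since $\dual{\CWstate\adjointstate}{\adjointstate} \le \coefficientbound\poincare^2\norm{\adjointstate}^2$; this explains the terms with $1/\stateestimateconstant$. For the latent variable we solve the second component for $\alpha_k^{-1}\latentvariable = f_2 + \restrictc\adjointstate$ pointwise. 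The $\Cblock$ term supplies $\int D^2\latentvariable^2/(1+\alpha_kD)$, and the $\dualmap{\Bblock}$ term supplies $\int \controlweight^{-1}\alpha_k^{-2}\latentvariable^2$ up to the $\adjointstate$ contribution. Since $\controlweight^{-1} \ge (1+\beta)^{-1}$ and $\alpha_k^{-2}(1+\alpha_kD)$ splits into $\alpha_k^{-2}+\alpha_k^{-1}D$, these two pieces together bound $\dual{\Qlatent\latentvariable}{\latentvariable}$. This accounts for the terms $8+4\beta$.

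\emph{Lower bound for $\primalspace$ (main obstacle).} Here $\Ablock$ is not coercive, so coercivity of $\state$ must come from $\Kstate\state - \Ec\control$, which is contaminated by $\control$. The plan is as follows. The $\Ablock$ and latent parts give
\[
\int(\beta^2/\controlweight + 1/(1+\alpha_kD))\control^2 \ge \tfrac34\dual{\controlweight\Mctrl\control}{\control}
\]
by \eqref{eq:weights-quadratic}. Then apply $\norm{a-b}^2 \ge (1-\epsilon)\norm a^2 - (\epsilon^{-1}-1)\norm b^2$ with $a = \Kstate\state$, using \eqref{eq:state-lower-ctrl}, and $b = \Ec\control$, using \cref{lemma:coupling-estimate}. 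Choosing the weight so that $4(\epsilon^{-1}-1)$ times the control term is absorbed by the $\tfrac34$ yields a multiple of $\stateestimateconstant\coefficientbound^{-1/2}\dual{(\KMstate)\state}{\state}$. Together with $\dual{\Mobs\state}{\state}$, obtained from \eqref{eq:state-observation} via the mean inequality, this controls $\dual{\Pobs\state}{\state}$. A convex combination with parameter $\primallowerconstant$ should produce $\primallowerconstant(1-\primallowerconstant/4)$; the delicate point is getting $\dual{\Mobs\state}{\state}$ from the quadratic lower bound $\dual{\Mobs\state}{\state}^2/\dual{\Pobs\state}{\state}$, which we handle by cases on whether $\dual{\Mobs\state}{\state}$ is a large fraction of $\dual{\Pobs\state}{\state}$.
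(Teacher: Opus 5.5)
Your upper bounds and your lower bound on $\primalspace$ follow the paper's argument in substance; two fixable points are noted at the end. The lower bound on $\multiplierspace$, however, has a genuine gap: it does not yield the stated $\clower{\multiplierspace}$.

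You control the weighted mass $\dual{\CWstate \adjointstate}{\adjointstate}$ inside $\dual{\Qctrl \, \adjointstate}{\adjointstate}$ by $\norm{\Kstate \adjointstate}_{\Pobs^{-1}}^2$, using only $W \le \coefficientbound$. By \eqref{eq:state-lower-obs} this costs
\[
	\dual{\CWstate \adjointstate}{\adjointstate}
	\le
	\coefficientbound \, \poincare^2 \, \norm{\adjointstate}_{\statespace}^2
	\le
	\frac{\coefficientbound^{1/2} \, \poincare^2}{\stateestimateconstant} \, \norm{\Kstate \adjointstate}_{\Pobs^{-1}}^2
	.
\]
So your $\clower{\multiplierspace}^{-2}$ contains the term $(1 + \coefficientbound^{1/2} \poincare^2) / \stateestimateconstant$. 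This grows like $\coefficientbound$, since $1/\stateestimateconstant$ grows like $\coefficientbound^{1/2}$, whereas the stated $\clower{\multiplierspace}^{-2}$ grows only like $\coefficientbound^{1/2}$. In the setting of \cref{remark:minres-contraction} ($\beta = 10^{-4}$, $\coefficientbound = 1/\beta$, unit square), your route gives $\clower{\multiplierspace}^2 < 0.03$ instead of the stated $0.068$. You would therefore prove a strictly weaker inequality, and the order $\coefficientbound^{-1/2}$ of $\clowerall$ in \cref{corollary:newton-spectrum} would degrade to $\coefficientbound^{-1}$.

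The missing idea is to use $\norm{\Kstate \adjointstate}_{\Pobs^{-1}}$ only where $W$ is bounded. The remaining weighted mass must come from $\Cblock$ together with the control component of $\dualmap{\Bblock}$.

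On $\set{\alpha_k D < 1}$ you have $W \le \alpha_k D < 1$. Hence $\int_{\set{\alpha_k D < 1}} W \, (\restrictc \, \adjointstate)^2 \, \d x \le \dual{(\KMstate) \, \adjointstate}{\adjointstate}$, which \eqref{eq:state-lower-obs} controls.

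On $\set{\alpha_k D \ge 1}$, fix $x$ and put $s = \latentvariable(x) / \alpha_k$. Consider the sum of the integrands of $\norm{\Cblock (\adjointstate, \latentvariable)}_{\dualspace{\multiplierspace}}^2$ and of the control component of $\norm{\dualmap{\Bblock} (\adjointstate, \latentvariable)}_{\dualspace{\primalspace}}^2$:
\[
	\frac{(s - \restrictc \, \adjointstate)^2}{\controlweight}
	+
	\frac{\alpha_k^2 D^2}{1 + \alpha_k D} \, s^2
	.
\]
This is a quadratic polynomial in $s$ with minimum $\alpha_k^2 D^2 \, (\restrictc \, \adjointstate)^2 / (1 + \alpha_k D + \alpha_k^2 D^2 \controlweight)$. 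When $\alpha_k D \ge 1$, one has $1 + \alpha_k D + \alpha_k^2 D^2 \controlweight \le 4 \, \alpha_k D \, (1 + \alpha_k \beta D) = 4 \, \alpha_k^2 D^2 / W$. So these two terms dominate $\frac{1}{4} \int_{\set{\alpha_k D \ge 1}} W \, (\restrictc \, \adjointstate)^2 \, \d x$ without any factor $\coefficientbound$. This is what produces $\max \set{(1 + \coefficientbound^{-1/2}) / \stateestimateconstant, 4}$.

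The same split also makes your latent estimate precise. On $\set{\alpha_k D \ge 1}$, $(1 + \alpha_k D) / \alpha_k^2 \le 4 D^2 / (1 + \alpha_k D)$, so $\Cblock$ alone bounds that part of $\dual{\Qlatent \latentvariable}{\latentvariable}$. On $\set{\alpha_k D < 1}$ you write $\latentvariable / \alpha_k = (\latentvariable / \alpha_k - \restrictc \, \adjointstate) + \restrictc \, \adjointstate$. Together these give $8 + 4 \beta + 4 / (\stateestimateconstant \, \coefficientbound^{1/2})$.

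Two smaller points remain.

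First, your primal upper bound is not closed as written. The split you invoke is exactly \cref{lemma:coupling-estimate} and still gives the factor $4$, so even with $\beta^2/\controlweight + 1/(1 + \alpha_k D) \le \controlweight$ the control coefficient is $9$. Applying $\norm{a - b}^2 \le 5 \norm{a}^2 + \frac{5}{4} \norm{b}^2$ to $\norm{\Kstate \state - \Ec \, \control}_{\Qctrl^{-1}}^2$ gives exactly $6$. The paper instead combines your dual-row bound $\norm{\dualmap{\Bblock} x_H}^2 \le 5 \, \norm{x_H}_{\multiplierspace}^2$ with $\norm{\Bblock} = \norm{\dualmap{\Bblock}}$.

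Second, in the primal lower bound, $\varepsilon (1 - \varepsilon/4)$ is exactly the minimum of $a^2/(a+b) + \varepsilon \, b$ over $a + b = 1$, attained at $a = \varepsilon/2. The paper obtains it by completing the square. A case split on $\dual{\Mobs \, \state}{\state} / \dual{\Pobs \state}{\state}$ gives less for the same $\varepsilon$, so it does not by itself produce $\primallowerconstant(1 - \primallowerconstant/4)$.
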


\begin{proof}
	Throughout the proof, $(\state, \control) \in \primalspace$ and $(\adjointstate, \latentvariable) \in \multiplierspace$ are arbitrary, and the norms are those of \eqref{eq:product-norms}.
	By \eqref{eq:newton-blocks} and $\dualmap{\Ec} = \Mctrl \, \restrictc$,
	\begin{equation*}
		\begin{aligned}
			\Ablock (\state, \control)
			&
			=
			\paren[big](){\Mobs \, \state, \, \beta \Mctrl \, \control}^\transp
			,
			&
			\Bblock (\state, \control)
			&
			=
			\paren[Big](){\Kstate \state - \Ec \, \control, \, \frac{1}{\alpha_k} \Mctrl \, \control}^\transp
			,
			\\
			\Cblock (\adjointstate, \latentvariable)
			&
			=
			\paren[Big](){0, \, \frac{1}{\alpha_k} \MDctrl \latentvariable}^\transp
			,
			&
			\dualmap{\Bblock} (\adjointstate, \latentvariable)
			&
			=
			\paren[Big](){\Kstate \adjointstate, \, \Mctrl \paren[Big](){\frac{\latentvariable}{\alpha_k} - \restrictc \, \adjointstate}}^\transp
			.
		\end{aligned}
	\end{equation*}
	Since $(\controlweight \, \Mctrl)^{-1} \Mctrl \, \controltest = \controltest / \controlweight$ for $\controltest \in \controlspace$, the definition \eqref{eq:dual-norm} and \eqref{eq:weights-latent-row}, \eqref{eq:weights-latent-block} give
	\begin{align}
		\label{eq:block-norm-a}
		\norm{\Ablock (\state, \control)}_{\dualspace{\primalspace}}^2
		&
		=
		\norm{\Mobs \, \state}_{\Pobs^{-1}}^2 + \intOmegactrl \frac{\beta^2}{\controlweight} \, \control^2 \, \d x
		,
		\\
		\label{eq:block-norm-b}
		\norm{\Bblock (\state, \control)}_{\dualspace{\multiplierspace}}^2
		&
		=
		\norm{\Kstate \state - \Ec \, \control}_{\Qctrl^{-1}}^2 + \intOmegactrl \frac{\control^2}{1 + \alpha_k D} \, \d x
		,
		\\
		\label{eq:block-norm-b-adjoint}
		\norm{\dualmap{\Bblock} (\adjointstate, \latentvariable)}_{\dualspace{\primalspace}}^2
		&
		=
		\norm{\Kstate \adjointstate}_{\Pobs^{-1}}^2 + \intOmegactrl \frac{1}{\controlweight} \, \paren[Big](){\frac{\latentvariable}{\alpha_k} - \restrictc \, \adjointstate}^2 \, \d x
		,
		\\
		\label{eq:block-norm-c}
		\norm{\Cblock (\adjointstate, \latentvariable)}_{\dualspace{\multiplierspace}}^2
		&
		=
		\intOmegactrl \frac{D^2 \latentvariable^2}{1 + \alpha_k D} \, \d x
		.
	\end{align}

	\emph{Step~1: The upper bounds.}
	We have
	\begin{equation*}
		\begin{aligned}
			\norm{\dualmap{\Bblock} (\adjointstate, \latentvariable)}_{\dualspace{\primalspace}}^2
			&
			\le
			\dual{\Qctrl \, \adjointstate}{\adjointstate} + 2 \intOmegactrl \frac{1}{\controlweight} \, \paren[Big](){\frac{\latentvariable}{\alpha_k}}^2 \, \d x + 2 \intOmegactrl \frac{1}{\controlweight} \, \paren(){\restrictc \, \adjointstate}^2 \, \d x
			\\
			&
			\le
			\dual{\Qctrl \, \adjointstate}{\adjointstate}
			+
			2 \, \dual{\Qlatent \latentvariable}{\latentvariable}
			+
			4 \intOmegactrl (1 + W) \, \paren(){\restrictc \, \adjointstate}^2 \, \d x
			\\
			&
			\le
			5 \, \dual{\Qctrl \, \adjointstate}{\adjointstate}
			+
			2 \, \dual{\Qlatent \latentvariable}{\latentvariable}
			\le
			5 \, \norm{(\adjointstate, \latentvariable)}_{\multiplierspace}^2
			.
		\end{aligned}
	\end{equation*}
	The first inequality is \eqref{eq:block-norm-b-adjoint} with \eqref{eq:state-upper-obs}.
	The second inequality uses $1 / \controlweight \le 1 + \alpha_k D$ for the term with $\latentvariable$ and \eqref{eq:weights-control-lower} for the term with $\adjointstate$.
	The third inequality holds since $\intOmegactrl (1 + W) \, \paren(){\restrictc \, \adjointstate}^2 \, \d x \le \dual{(\KMstate) \, \adjointstate}{\adjointstate} + \dual{\CWstate \adjointstate}{\adjointstate} \le \dual{\Qctrl \, \adjointstate}{\adjointstate}$ by $\coefficientbound \ge 1$.
	The operator $\Bblock \in \cL(\primalspace, \dualspace{\multiplierspace})$ has the same norm as its dual operator $\dualmap{\Bblock} \in \cL(\multiplierspace, \dualspace{\primalspace})$, hence also $\norm{\Bblock (\state, \control)}_{\dualspace{\multiplierspace}}^2 \le 5 \, \norm{(\state, \control)}_{\primalspace}^2$.
	Therefore
	\begin{equation*}
		\norm{\Ablock (\state, \control)}_{\dualspace{\primalspace}}^2
		+
		\norm{\Bblock (\state, \control)}_{\dualspace{\multiplierspace}}^2
		\le
		\dual{\Pobs \state}{\state}
		+
		\dual{\controlweight \, \Mctrl \, \control}{\control}
		+
		5 \, \norm{(\state, \control)}_{\primalspace}^2
		=
		6 \, \norm{(\state, \control)}_{\primalspace}^2
		,
	\end{equation*}
	where the inequality is \eqref{eq:block-norm-a} with \eqref{eq:state-observation} and $\beta^2 / \controlweight \le \controlweight$.
	In the same way,
	\begin{equation*}
		\norm{\Cblock (\adjointstate, \latentvariable)}_{\dualspace{\multiplierspace}}^2
		+
		\norm{\dualmap{\Bblock} (\adjointstate, \latentvariable)}_{\dualspace{\primalspace}}^2
		\le
		\dual{\Qlatent \latentvariable}{\latentvariable}
		+
		5 \, \norm{(\adjointstate, \latentvariable)}_{\multiplierspace}^2
		\le
		6 \, \norm{(\adjointstate, \latentvariable)}_{\multiplierspace}^2
		,
	\end{equation*}
	where the first inequality is \eqref{eq:block-norm-c} with \eqref{eq:weights-latent-block} and the bound on $\dualmap{\Bblock}$ above.
	This proves the upper bounds in \eqref{eq:row-conditions} with $\cupper{\primalspace} = \cupper{\multiplierspace} = \sqrt{6}$.

	\emph{Step~2: The lower bound on $\primalspace$.}
	Let $\tau \coloneqq 3 / (16 + 2 \, \stateestimateconstant)$, so that $\tau \, \stateestimateconstant / 2 = 3/4 - 4 \, \tau = \primallowerconstant$.
	Let $\state \ne 0$.
	We have
	\begin{align*}
		\MoveEqLeft
		\norm{\Ablock (\state, \control)}_{\dualspace{\primalspace}}^2
		+
		\norm{\Bblock (\state, \control)}_{\dualspace{\multiplierspace}}^2
		\\
		&
		=
		\norm{\Mobs \, \state}_{\Pobs^{-1}}^2
		+
		\norm{\Kstate \state - \Ec \, \control}_{\Qctrl^{-1}}^2
		+
		\intOmegactrl \paren[Big](){\frac{\beta^2}{\controlweight} + \frac{1}{1 + \alpha_k D}} \control^2 \, \d x
		\\
		&
		\ge
		\norm{\Mobs \, \state}_{\Pobs^{-1}}^2 + \tau \, \norm{\Kstate \state - \Ec \, \control}_{\Qctrl^{-1}}^2 + \frac{3}{4} \, \dual{\controlweight \, \Mctrl \, \control}{\control}
		\\
		&
		\ge
		\norm{\Mobs \, \state}_{\Pobs^{-1}}^2 + \frac{\tau}{2} \, \norm{\Kstate \state}_{\Qctrl^{-1}}^2 + \paren[Big](){\frac{3}{4} - 4 \, \tau} \, \dual{\controlweight \, \Mctrl \, \control}{\control}
		\\
		&
		\ge
		\frac{\dual{\Mobs \, \state}{\state}^2}{\dual{\Pobs \state}{\state}} + \primallowerconstant \, \coefficientbound^{-1/2} \, \dual{(\KMstate) \, \state}{\state}
		+ \primallowerconstant \, \dual{\controlweight \, \Mctrl \, \control}{\control}
		.
	\end{align*}
	The equality is \eqref{eq:block-norm-a} and \eqref{eq:block-norm-b}.
	The first inequality is $\tau \le 1$ and \eqref{eq:weights-quadratic}.
	The second inequality is $\norm{\Kstate \state - \Ec \, \control}_{\Qctrl^{-1}}^2 \ge \frac{1}{2} \, \norm{\Kstate \state}_{\Qctrl^{-1}}^2 - \norm{\Ec \, \control}_{\Qctrl^{-1}}^2$ together with \cref{lemma:coupling-estimate}.
	The third inequality is \eqref{eq:state-observation}, \eqref{eq:state-lower-ctrl} and the choice of $\tau$.
	For real numbers $a \ge 0$, $b > 0$ and $\varepsilon$,
	\begin{equation*}
		\frac{a^2}{a + b} + \varepsilon \, b
		\ge
		\varepsilon \paren[Big](){1 - \frac{\varepsilon}{4}} (a + b)
		,
	\end{equation*}
	since
	\begin{equation*}
		a^2 + \varepsilon \, b \, (a + b) - \varepsilon \paren[Big](){1 - \frac{\varepsilon}{4}} (a + b)^2
		=
		\paren[Big](){\paren[Big](){1 - \frac{\varepsilon}{2}} \, a - \frac{\varepsilon \, b}{2}}^2
		.
	\end{equation*}
	With $a = \dual{\Mobs \, \state}{\state}$, $b = \coefficientbound^{-1/2} \, \dual{(\KMstate) \, \state}{\state}$, $a + b = \dual{\Pobs \state}{\state}$ and $\varepsilon = \primallowerconstant$, the estimate above continues as
	\begin{equation*}
		\begin{aligned}
			\norm{\Ablock (\state, \control)}_{\dualspace{\primalspace}}^2 + \norm{\Bblock (\state, \control)}_{\dualspace{\multiplierspace}}^2
			&
			\ge
			\primallowerconstant \paren[Big](){1 - \frac{\primallowerconstant}{4}} \, \dual{\Pobs \state}{\state} + \primallowerconstant \, \dual{\controlweight \, \Mctrl \, \control}{\control}
			\\
			&
			\ge
			\primallowerconstant \paren[Big](){1 - \frac{\primallowerconstant}{4}} \, \norm{(\state, \control)}_{\primalspace}^2
		\end{aligned}
	\end{equation*}
	for $\state \ne 0$, and trivially for $\state = 0$.
	This is the lower bound in \eqref{eq:row-condition-primal} with $\clower{\primalspace}^2 = \primallowerconstant \, (1 - \primallowerconstant / 4)$.

	\emph{Step~3: The lower bound on $\multiplierspace$.}
	For \aa $x \in \Omegactrl$ with $\alpha_k D(x) \ge 1$ and every real $s$,
	\begin{equation*}
		\frac{\paren(){s - \restrictc \, \adjointstate(x)}^2}{\controlweight(x)} + \frac{\alpha_k^2 D(x)^2}{1 + \alpha_k D(x)} \, s^2
		\ge
		\frac{W(x)}{4} \, \paren(){\restrictc \, \adjointstate(x)}^2
		.
	\end{equation*}
	Indeed, the left-hand side is a quadratic polynomial in $s$ with the minimum
	\begin{equation*}
		\frac{\alpha_k^2 D(x)^2}{1 + \alpha_k D(x) + \alpha_k^2 D(x)^2 \controlweight(x)} \, \paren(){\restrictc \, \adjointstate(x)}^2
		,
	\end{equation*}
	and
	\begin{equation*}
		\begin{aligned}
			1 + \alpha_k D + \alpha_k^2 D^2 \controlweight
			&
			=
			1 + \alpha_k D + \alpha_k^2 \beta D^2 + \frac{\alpha_k^2 D^2}{1 + \alpha_k D}
			\le
			1 + 2 \, \alpha_k D + \alpha_k^2 \beta D^2
			\\
			&
			\le
			4 \, \alpha_k D \, (1 + \alpha_k \beta D)
			=
			\frac{4 \, \alpha_k^2 D^2}{W}
			,
		\end{aligned}
	\end{equation*}
	where the second inequality holds since $\alpha_k D \ge 1$.
	With $s = \latentvariable(x) / \alpha_k$, integration over $\set{\alpha_k D \ge 1}$ and \eqref{eq:block-norm-b-adjoint}, \eqref{eq:block-norm-c} give
	\begin{align*}
		\norm{\Cblock (\adjointstate, \latentvariable)}_{\dualspace{\multiplierspace}}^2 + \norm{\dualmap{\Bblock} (\adjointstate, \latentvariable)}_{\dualspace{\primalspace}}^2
		&
		\ge
		\norm{\Kstate \adjointstate}_{\Pobs^{-1}}^2 + \frac{1}{4} \int_{\set{\alpha_k D \ge 1}} W \paren(){\restrictc \, \adjointstate}^2 \, \d x
		\\
		&
		\ge
		\stateestimateconstant \, \coefficientbound^{1/2} \, \dual{(\KMstate) \, \adjointstate}{\adjointstate} + \frac{1}{4} \int_{\set{\alpha_k D \ge 1}} W \paren(){\restrictc \, \adjointstate}^2 \, \d x
		,
	\end{align*}
	where the second inequality is \eqref{eq:state-lower-obs}.
	On the other hand,
	\begin{equation*}
		\begin{aligned}
			\dual{\Qctrl \, \adjointstate}{\adjointstate}
			&
			=
			\int_{\set{\alpha_k D \ge 1}} W \paren(){\restrictc \, \adjointstate}^2 \, \d x + \int_{\set{\alpha_k D < 1}} W \paren(){\restrictc \, \adjointstate}^2 \, \d x + \coefficientbound^{1/2} \, \dual{(\KMstate) \, \adjointstate}{\adjointstate}
			\\
			&
			\le
			\int_{\set{\alpha_k D \ge 1}} W \paren(){\restrictc \, \adjointstate}^2 \, \d x + \paren[big](){1 + \coefficientbound^{-1/2}} \, \coefficientbound^{1/2} \, \dual{(\KMstate) \, \adjointstate}{\adjointstate}
			,
		\end{aligned}
	\end{equation*}
	where the inequality holds since $W \le \alpha_k D < 1$ on $\set{\alpha_k D < 1}$ by \eqref{eq:newton-coefficient} and $\norm{\restrictc \, \adjointstate}_{\controlspace}^2 \le \dual{(\KMstate) \, \adjointstate}{\adjointstate}$.
	Comparing the two right-hand sides term by term,
	\begin{equation}
		\label{eq:dual-row-lower-adjoint}
		\norm{\Cblock (\adjointstate, \latentvariable)}_{\dualspace{\multiplierspace}}^2 + \norm{\dualmap{\Bblock} (\adjointstate, \latentvariable)}_{\dualspace{\primalspace}}^2
		\ge
		\min \set[Big]{\frac{\stateestimateconstant}{1 + \coefficientbound^{-1/2}}, \, \frac{1}{4}} \, \dual{\Qctrl \, \adjointstate}{\adjointstate}
		.
	\end{equation}
	It remains to bound $\dual{\Qlatent \latentvariable}{\latentvariable} = \intOmegactrl (1 + \alpha_k D) \, \paren(){\latentvariable / \alpha_k}^2 \, \d x$ by the left-hand side of \eqref{eq:dual-row-lower-adjoint}.
	On $\set{\alpha_k D \ge 1}$, $(1 + \alpha_k D)^2 \le 4 \, \alpha_k^2 D^2$, hence
	\begin{equation*}
		\int_{\set{\alpha_k D \ge 1}} (1 + \alpha_k D) \, \paren[Big](){\frac{\latentvariable}{\alpha_k}}^2 \, \d x
		\le
		4 \int_{\set{\alpha_k D \ge 1}} \frac{D^2 \latentvariable^2}{1 + \alpha_k D} \, \d x
		\le
		4 \, \norm{\Cblock (\adjointstate, \latentvariable)}_{\dualspace{\multiplierspace}}^2
		,
	\end{equation*}
	where the second inequality is \eqref{eq:block-norm-c}.
	On $\set{\alpha_k D < 1}$, $1 + \alpha_k D < 2$ and $\controlweight \le 1 + \beta$, hence
	\begin{align*}
		\MoveEqLeft
		\int_{\set{\alpha_k D < 1}} (1 + \alpha_k D) \, \paren[Big](){\frac{\latentvariable}{\alpha_k}}^2 \, \d x
		\\
		&
		\le
		4 \intOmegactrl \paren[Big](){\frac{\latentvariable}{\alpha_k} - \restrictc \, \adjointstate}^2 \, \d x
		+
		4 \intOmegactrl \paren(){\restrictc \, \adjointstate}^2 \, \d x
		\\
		&
		\le
		4 \, (1 + \beta) \, \norm{\dualmap{\Bblock} (\adjointstate, \latentvariable)}_{\dualspace{\primalspace}}^2
		+
		\frac{4}{\stateestimateconstant \, \coefficientbound^{1/2}} \, \norm{\Kstate \adjointstate}_{\Pobs^{-1}}^2
		\\
		&
		\le
		\paren[Big](){4 + 4 \, \beta + \frac{4}{\stateestimateconstant \, \coefficientbound^{1/2}}}
		\paren[Big](){\norm{\Cblock (\adjointstate, \latentvariable)}_{\dualspace{\multiplierspace}}^2 + \norm{\dualmap{\Bblock} (\adjointstate, \latentvariable)}_{\dualspace{\primalspace}}^2}
		.
	\end{align*}
	The second inequality is \eqref{eq:block-norm-b-adjoint} with $1 \le (1 + \beta) / \controlweight$, and $\norm{\restrictc \, \adjointstate}_{\controlspace}^2 \le \dual{(\KMstate) \, \adjointstate}{\adjointstate} \le \norm{\Kstate \adjointstate}_{\Pobs^{-1}}^2 / (\stateestimateconstant \, \coefficientbound^{1/2})$ by \eqref{eq:state-lower-obs}.
	The third inequality is \eqref{eq:block-norm-b-adjoint} again.
	Adding the two parts and \eqref{eq:dual-row-lower-adjoint},
	\begin{align*}
		\MoveEqLeft
		\norm{(\adjointstate, \latentvariable)}_{\multiplierspace}^2
		\\
		&
		=
		\dual{\Qctrl \, \adjointstate}{\adjointstate} + \dual{\Qlatent \latentvariable}{\latentvariable}
		\\
		&
		\le
		\paren[Big](){\max \set[Big]{\frac{1 + \coefficientbound^{-1/2}}{\stateestimateconstant}, \, 4} + 8 + 4 \, \beta + \frac{4}{\stateestimateconstant \, \coefficientbound^{1/2}}}
		\paren[Big](){\norm{\Cblock (\adjointstate, \latentvariable)}_{\dualspace{\multiplierspace}}^2 + \norm{\dualmap{\Bblock} (\adjointstate, \latentvariable)}_{\dualspace{\primalspace}}^2}
		,
	\end{align*}
	which is the lower bound in \eqref{eq:row-condition-dual} with the constant $\clower{\multiplierspace}$ of \eqref{eq:preconditioner-constants}.
\end{proof}

\begin{remark}[Discretization]
	\label{remark:discretization:preconditioner}
	The bounds of \cref{proposition:preconditioner-constants}, and with them the spectral bounds and the contraction factor of \cref{subsection:spectrum-minres}, hold with the same constants for the matrices of the discretization by piecewise linear finite elements for the state and adjoint state, and piecewise constants for the control and latent variable (\cref{remark:discretization}).
	All estimates are therefore independent of the mesh.
	The proofs carry over since the constants depend on $\poincare$, $\beta$ and $\coefficientbound$ only, since neither the $L^\infty$-regularity nor the compactness arguments of \cref{section:newton-method} are used, and since the three ingredients of the proofs persist on the discrete spaces:
	the Poincaré inequality, the bound $W \le \coefficientbound$ and the inequalities $\Mobs \le \Pobs$, $\CWstate \le \Qctrl$ and $\Mstate \le \coefficientbound^{-1/2} \Qctrl$ between the forms restrict to subspaces, which gives the upper bounds;
	the lower bounds test the dual norms with the element itself, which lies in the subspace;
	and the relation $\dualmap{\Ec} = \Mctrl \, \restrictc$ with the $L^2(\Omegactrl)$-projection $\restrictc$ as well as the pointwise identities $\MDctrl \latentvariable = \Mctrl (D \, \latentvariable)$, $(\controlweight \, \Mctrl)^{-1} \Mctrl \, \controltest = \controltest / \controlweight$ and $\Qlatent^{-1} \Mctrl \, \controltest = \alpha_k^2 \controltest / (1 + \alpha_k D)$ hold in the space of piecewise constant functions, since $D = \exp(\current{\latentvariable})$ is itself piecewise constant on the same mesh.
	The choice of piecewise constant controls thus enters through one property: the space of discrete controls is invariant under multiplication by functions of the discrete latent variable.
	No regularity of the mesh family, such as shape regularity or quasi-uniformity, is needed, since neither inverse nor interpolation estimates enter; the meshes only need to resolve $\Omegaobs$ and $\Omegactrl$, and the integrals must be evaluated exactly.
	This assumes exact solves with $\Pobs$ and $\Qctrl$, as in \cref{subsection:implementation}; approximate solves would contribute their own equivalence constants.
	Whether other discretizations, for instance piecewise linear controls, admit similar bounds is not addressed in this paper.
\end{remark}

\subsection{The Spectrum and the Minimal Residual Method}
\label{subsection:spectrum-minres}

This subsection combines \cref{theorem:saddle-point-bounds} with \cref{proposition:preconditioner-constants} into bounds for the spectrum of the Newton operator with respect to the inner product defined by the preconditioner, and it states the contraction factor of \minres that follows from them.

\begin{corollary}[Spectral bounds for the Newton operator]
	\label{corollary:newton-spectrum}
	Let $\clower{\primalspace}$ and $\clower{\multiplierspace}$ be given by \eqref{eq:preconditioner-constants}, and let
	\begin{equation}
		\label{eq:newton-spectrum-constants}
		\clowerall
		\coloneqq
		\frac{3 - \sqrt{5}}{4 \sqrt{6}} \min \set[big]{\clower{\primalspace}^2, \clower{\multiplierspace}^2}
		,
		\quad
		\quad
		\cupperall
		\coloneqq
		2 \sqrt{3}
		.
	\end{equation}
	Then
	\begin{multline}
		\label{eq:newton-operator-bounds}
		\clowerall \, \norm{(\state, \control, \adjointstate, \latentvariable)}_{\primalspace \times \multiplierspace}
		\le
		\norm{\newtonop (\state, \control, \adjointstate, \latentvariable)}_{\dualspace{(\primalspace \times \multiplierspace)}}
		\le
		\cupperall \, \norm{(\state, \control, \adjointstate, \latentvariable)}_{\primalspace \times \multiplierspace}
		\\
		\text{for all } (\state, \control, \adjointstate, \latentvariable) \in \primalspace \times \multiplierspace
		,
	\end{multline}
	and $\sigma(\newtonop; \precond)$ is contained in $\interval[][]{-\cupperall}{-\clowerall} \cup \interval[][]{\clowerall}{\cupperall}$.
	The constants depend on $\coefficientbound$, the control cost parameter~$\beta$ and the Poincaré constant~$\poincare$ only, so that they are the same for all $\alpha_k$, all iterates and all observation and control domains once $\coefficientbound$ is fixed, for instance as $\coefficientbound = \max \set{1, 1/\beta}$, and $\clowerall$ is of the order $\coefficientbound^{-1/2}$ as $\coefficientbound \to \infty$.
\end{corollary}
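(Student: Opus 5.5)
This is a direct combination of \cref{theorem:saddle-point-bounds} and \cref{proposition:preconditioner-constants}, followed by \cref{remark:saddle-point-spectrum}. First, \cref{proposition:preconditioner-constants} shows that the blocks $\Ablock$, $\Bblock$, $\Cblock$ of \eqref{eq:newton-blocks} satisfy the row conditions \eqref{eq:row-conditions} in the norms \eqref{eq:product-norms} induced by $\precond$. The constants are $\cupper{\primalspace} = \cupper{\multiplierspace} = \sqrt{6}$ and $\clower{\primalspace}$, $\clower{\multiplierspace}$ from \eqref{eq:preconditioner-constants}. The structural hypotheses of \cref{theorem:saddle-point-bounds} were already recorded after \eqref{eq:newton-blocks}: $\Ablock$ and $\Cblock$ are symmetric positive semidefinite, because $D > 0$ and $\beta > 0$, and $\Bblock$ is bounded. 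The spaces $\primalspace$ and $\multiplierspace$ are Hilbert spaces with Riesz maps $\riesz{\primalspace}$ and $\riesz{\multiplierspace}$, since the four blocks of $\precond$ are symmetric positive definite and equivalent to the standard norms.

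Next, I substitute these constants into \eqref{eq:saddle-point-constants}. Since $\max\{\cupper{\primalspace}, \cupper{\multiplierspace}\} = \sqrt{6}$, this gives $\cupperall = \sqrt{2}\,\sqrt{6} = 2\sqrt{3}$ and $\clowerall = \frac{3-\sqrt5}{4\sqrt6}\min\{\clower{\primalspace}^2, \clower{\multiplierspace}^2\}$. These are exactly \eqref{eq:newton-spectrum-constants}, and \eqref{eq:saddle-point-bounds} becomes \eqref{eq:newton-operator-bounds}. \Cref{remark:saddle-point-spectrum} then places $\sigma(\newtonop;\precond)$ in $[-\cupperall, -\clowerall] \cup [\clowerall, \cupperall]$. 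The reason is that $\precond^{-1}\newtonop$ is self-adjoint in the $\precond$ inner product, has norm at most $\cupperall$, and has an inverse of norm at most $1/\clowerall$.

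For the dependence statement, I inspect \eqref{eq:state-estimates-constant}, \eqref{eq:preconditioner-constants-epsilon} and \eqref{eq:preconditioner-constants}. The quantities $\stateestimateconstant$, $\primallowerconstant$, $\clower{\primalspace}$ and $\clower{\multiplierspace}$ involve only $\poincare$, $\coefficientbound$ and $\beta$. No $\alpha_k$, $D$, $W$, $\Omegaobs$ or $\Omegactrl$ appears, except through the admissibility condition $\coefficientbound \ge \max\{1, \norm{W}_{L^\infty}\}$, and that condition holds uniformly for $\coefficientbound = \max\{1, 1/\beta\}$ because $W < 1/\beta$.

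The one step needing care is the asymptotic order $\coefficientbound^{-1/2}$ as $\coefficientbound \to \infty$, which I expect to be the main (mild) obstacle. From \eqref{eq:state-estimates-constant}, $\stateestimateconstant \sim (1+\poincare^2)^{-1}\poincare^{-2}\coefficientbound^{-1/2}$. Hence $\primallowerconstant \sim 3\stateestimateconstant/32$ and $\clower{\primalspace}^2 \sim \primallowerconstant$, which is of order $\coefficientbound^{-1/2}$.

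For $\clower{\multiplierspace}^2$, the dominant terms in the bracket are $(1+\coefficientbound^{-1/2})/\stateestimateconstant$, of order $\coefficientbound^{1/2}$, and $4/(\stateestimateconstant\,\coefficientbound^{1/2})$, which stays bounded. The term $4\beta$ is at most $4/\coefficientbound$ when $\coefficientbound = 1/\beta$, and it stays bounded if $\beta$ is held fixed. So the bracket is of order $\coefficientbound^{1/2}$, and $\clower{\multiplierspace}^2$ is of order $\coefficientbound^{-1/2}$.

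Both lower constants are therefore of order $\coefficientbound^{-1/2}$, and so is their minimum, which gives $\clowerall$ of order $\coefficientbound^{-1/2}$.
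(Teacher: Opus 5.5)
Your proposal is correct and follows the paper's proof: you verify the row conditions via \cref{proposition:preconditioner-constants}, substitute $\cupper{\primalspace} = \cupper{\multiplierspace} = \sqrt{6}$ into \cref{theorem:saddle-point-bounds}, read off the spectral inclusion from \cref{remark:saddle-point-spectrum}, and obtain the order $\coefficientbound^{-1/2}$ from the asymptotics of $\stateestimateconstant$. You also handle the $4\,\beta$ term in $\clower{\multiplierspace}^2$ explicitly (bounded for fixed $\beta$, vanishing for $\coefficientbound = 1/\beta$), which the paper leaves implicit.
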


\begin{proof}
	By \cref{proposition:preconditioner-constants}, the blocks \eqref{eq:newton-blocks} satisfy the conditions \eqref{eq:row-conditions} with the constants \eqref{eq:preconditioner-constants}.
	\Cref{theorem:saddle-point-bounds} gives \eqref{eq:newton-operator-bounds} with the constants \eqref{eq:saddle-point-constants}, which are the numbers \eqref{eq:newton-spectrum-constants} since $\cupper{\primalspace} = \cupper{\multiplierspace} = \sqrt{6}$.
	The constants \eqref{eq:preconditioner-constants} depend on $\coefficientbound$, $\beta$ and $\poincare$ only.
	Finally, $\stateestimateconstant \, \coefficientbound^{1/2} \to 1 / \paren[big](){(1 + \poincare^2) \, \poincare^2}$ as $\coefficientbound \to \infty$ by \eqref{eq:state-estimates-constant}, so that $\clower{\primalspace}^2$ and $\clower{\multiplierspace}^2$ are of the order $\coefficientbound^{-1/2}$ by \eqref{eq:preconditioner-constants-epsilon} and \eqref{eq:preconditioner-constants}.
\end{proof}

We consider the minimal residual method for $\newtonop x = b$ with $b \in \dualspace{(\primalspace \times \multiplierspace)}$ and preconditioner~$\precond$, \cf \cite[Sec.~3]{GuennelHerzogSachs:2014:1} and \cite[Chap.~4.1]{ElmanSilvesterWathen:2014:1}.
Starting from $x_0 \in \primalspace \times \multiplierspace$ with the residual $s_0 \coloneqq b - \newtonop x_0$, its $j$-th iterate $x_j$ minimizes $\norm{b - \newtonop x}_{\precond^{-1}}$ over all $x$ in
\begin{equation*}
	x_0 + \operatorname{span} \set[big]{\precond^{-1} s_0, \, (\precond^{-1} \newtonop) \precond^{-1} s_0, \, \dots, \, (\precond^{-1} \newtonop)^{j - 1} \precond^{-1} s_0}
	.
\end{equation*}
Every step requires one application of $\newtonop$ and one of $\precond^{-1}$.
The preconditioner-induced dual norm of the residual $b - \newtonop x_j$ is available in the method at no cost (\cite[Sec.~3]{GuennelHerzogSachs:2014:1}), and it is the natural quantity to base stopping rules on, see \cref{section:complete-algorithm}.

\begin{corollary}[Convergence of the Minimal Residual Method]
	\label{corollary:minres-convergence}
	In the setting of \cref{corollary:newton-spectrum}, the residuals $s_j \coloneqq b - \newtonop x_j$ of \minres satisfy
	\begin{equation}
		\label{eq:minres-bound}
		\norm{s_{2 j}}_{\precond^{-1}}
		\le
		2 \, \paren[Big](){\frac{\cupperall - \clowerall}{\cupperall + \clowerall}}^{j} \, \norm{s_0}_{\precond^{-1}}
		\quad
		\text{for all }
		j \ge 0
		.
	\end{equation}
\end{corollary}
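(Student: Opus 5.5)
The bound \eqref{eq:minres-bound} follows from the standard polynomial characterization of \minres, which applies because $\precond^{-1} \newtonop$ is self-adjoint with respect to the inner product defined by $\precond$, together with the inclusion $\sigma(\newtonop; \precond) \subseteq \interval[][]{-\cupperall}{-\clowerall} \cup \interval[][]{\clowerall}{\cupperall}$ from \cref{corollary:newton-spectrum}.

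\emph{Step~1: Transfer to a self-adjoint operator.} Work in the Hilbert space $\primalspace \times \multiplierspace$ with the inner product $\dual{\precond \cdot}{\cdot}$. Let $T \coloneqq \precond^{-1} \newtonop$ and $e_j \coloneqq \precond^{-1} s_j$. Then $\norm{s_j}_{\precond^{-1}} = \norm{e_j}_{\primalspace \times \multiplierspace}$. The Krylov space of \minres shows that $e_j = p(T) e_0$ for a polynomial $p$ of degree at most $j$ with $p(0) = 1$. The minimization property of $x_j$ therefore gives
\begin{equation*}
	\norm{s_j}_{\precond^{-1}}
	=
	\min_{p \in \mathcal{P}_j, \, p(0) = 1} \norm{p(T) e_0}
	\le
	\min_{p} \sup_{\mu \in \sigma(T)} \abs{p(\mu)} \, \norm{s_0}_{\precond^{-1}}
	.
\end{equation*}
The last inequality uses the spectral theorem for the bounded self-adjoint operator $T$, namely $\norm{p(T)} = \sup_{\sigma(T)} \abs{p}$. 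By \cref{remark:saddle-point-spectrum}, $\sigma(T) = \sigma(\newtonop; \precond)$. In infinite dimensions the iterates may fail to terminate, but nothing in this argument requires termination.

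\emph{Step~2: Polynomial on two symmetric intervals.} For degree $2j$, choose $p(\mu) \coloneqq T_j(q(\mu)) / T_j(q(0))$, where $T_j$ is the Chebyshev polynomial and
\begin{equation*}
	q(\mu) \coloneqq \frac{\cupperall^2 + \clowerall^2 - 2 \mu^2}{\cupperall^2 - \clowerall^2}
	.
\end{equation*}
The map $q$ sends $\clowerall^2 \le \mu^2 \le \cupperall^2$ onto $\interval[][]{-1}{1}$, so $\abs{p} \le 1 / T_j(q(0))$ on both intervals. Set $\kappa \coloneqq \cupperall^2 / \clowerall^2$, so that $q(0) = (\kappa + 1) / (\kappa - 1)$. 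The standard estimate
\begin{equation*}
	T_j\paren[Big](){\frac{\kappa + 1}{\kappa - 1}} \ge \frac{1}{2} \paren[Big](){\frac{\sqrt{\kappa} + 1}{\sqrt{\kappa} - 1}}^j
\end{equation*}
then yields
\begin{equation*}
	\sup_{\sigma(T)} \abs{p} \le 2 \paren[Big](){\frac{\cupperall - \clowerall}{\cupperall + \clowerall}}^j
	.
\end{equation*}
This is \eqref{eq:minres-bound}, and it is the classical result of \cite[Chap.~4.1]{ElmanSilvesterWathen:2014:1}.

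\emph{Main obstacle.} The only delicate point is justifying the polynomial bound in function space rather than for matrices. This needs the self-adjointness of $T$ in the $\precond$-inner product (\cref{remark:saddle-point-spectrum}) and the identification of the \minres residual norm with $\norm{e_j}$, which I take from \cite[Sec.~3]{GuennelHerzogSachs:2014:1}. The Chebyshev estimate itself is routine.
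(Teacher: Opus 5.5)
Your proposal is correct and follows the paper's proof. Both use the polynomial min--max bound for \minres in the $\precond$-inner product, the spectral inclusion of \cref{corollary:newton-spectrum}, and the degree-$2j$ Chebyshev polynomial in $\mu^2$ on the two symmetric intervals. The only difference is that you carry out the Chebyshev estimate and the spectral-theorem step explicitly, where the paper cites Elman, Silvester and Wathen.
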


\begin{proof}
	The residual of the $j$-th iterate satisfies
	\begin{equation*}
		\norm{s_j}_{\precond^{-1}}
		\le
		\min_{\pi} \, \sup_{\mu \in \sigma(\precond^{-1} \newtonop)} \abs{\pi(\mu)} \, \norm{s_0}_{\precond^{-1}}
		,
	\end{equation*}
	where the minimum is taken over all polynomials $\pi$ of degree at most $j$ with $\pi(0) = 1$.
	By \cref{corollary:newton-spectrum}, the spectrum $\sigma(\precond^{-1} \newtonop) = \sigma(\newtonop; \precond)$ lies in the two intervals $\interval[][]{-\cupperall}{-\clowerall}$ and $\interval[][]{\clowerall}{\cupperall}$ of equal length.
	For these intervals, the polynomial of degree~$2 j$ constructed in \cite[Sec.~4.2.4]{ElmanSilvesterWathen:2014:1} from the Chebyshev polynomial of degree~$j$ has the value one at the origin and absolute value at most $2 \, \paren[big](){(\cupperall - \clowerall) / (\cupperall + \clowerall)}^j$ on both intervals (\cite[Thm.~4.14]{ElmanSilvesterWathen:2014:1}).
	This gives \eqref{eq:minres-bound}.
\end{proof}

\begin{remark}[The size of the contraction factor]
	\label{remark:minres-contraction}
	The factor in \eqref{eq:minres-bound} may be far from descriptive for a particular application of \minres since it does not take into account the specific initial residual (\cite{TitleyPeloquinPestanaWathen:2013:1}), it is based on a tractable but non-optimal choice of polynomial (\cite{WathenFischerSilvester:1995:1}) and it does not consider superlinear convergence effects (\cite{HerzogSachs:2015:1}).
	Moreover, the constants $\clowerall$ and $\cupperall$ obtained from \cref{theorem:saddle-point-bounds} by means of \cref{proposition:preconditioner-constants} may be crude.
	For instance, when $\beta = 10^{-4}$ and $\coefficientbound = 1/\beta$ on the unit square, where $\poincare^2 = 1 / (2 \pi^2)$, the formulas \eqref{eq:state-estimates-constant}, \eqref{eq:preconditioner-constants-epsilon} and \eqref{eq:preconditioner-constants} give $\stateestimateconstant = 0.156$, $\primallowerconstant = 0.0143$, $\clower{\primalspace}^2 = 0.0143$ and $\clower{\multiplierspace}^2 = 0.068$. Then the constants are $\clowerall = 0.0011$ and $\cupperall = 3.46$, the contraction factor in \eqref{eq:minres-bound} is $0.9994$, and \eqref{eq:minres-bound} guarantees a reduction of the residual by a factor of ten only after about $\num{9336}$ steps.
	The experiments of \cref{section:numerics} need $13$ or $14$ steps per Newton system on average, and each of these solves reduces the residual by a factor of at least $12$.
	As $\beta \to 0$ with $\coefficientbound = 1/\beta$, the constant $\clowerall$ is of the order $\beta^{1/2}$, and the number of steps predicted by \eqref{eq:minres-bound} grows like $\beta^{-1/2}$.
\end{remark}

\section{Complete Algorithm}
\label{section:complete-algorithm}

This section combines \cref{section:lvpp-iteration,section:newton-method,section:preconditioning} into a complete algorithm, which is the basis of our implementation.
The algorithm has three levels.
The outer iteration is the LVPP iteration of \cref{section:lvpp-iteration}.
On the intermediate level, each subproblem is solved by Newton's method for its scaled form \eqref{eq:optimality-form-scaled}, started at the previous LVPP iterate.
On the inner level, each Newton system is solved inexactly by the minimal residual method with the preconditioner \eqref{eq:preconditioner}.
What remains to be chosen are the proximity parameters $\alpha_k$ and stopping criteria on all levels.
The algorithm is written in the function space setting (\cref{algorithm:lvpp}).

The choice of proximity parameters is constrained by the following considerations.
By \cref{theorem:lvpp-convergence}~\ref{item:lvpp-convergence-rate}, the squared error of the control is bounded by a constant divided by $\alpha_2 + \dots + \alpha_k$, which favors large proximity parameters.
By \cref{theorem:newton-local-convergence}, Newton's method converges from the previous iterate if $\alpha_k \, \norm{\lambda^{k-1}}_{L^\infty(\Omegactrl)} \le c_0$, which limits $\alpha_k$.
The constant $c_0$ depends on $\regularityconstant$ through $\solvabilityconstant$ and is not known in practice.
We therefore let the number $j_k$ of Newton iterations required in the $k$-th subproblem decide about the next proximity parameter.
With thresholds $j_- < j_+$, factors $\nu_+ > 1$ and $\nu_- \in \interval(){0}{1}$ and a lower bound $\alpha_{\min} \in \interval(]{0}{\alpha_1}$, the rule reads
\begin{equation}
	\label{eq:proximal-rule}
	\alpha_{k+1}
	\coloneqq
	\begin{cases}
		\nu_+ \, \alpha_k
		&
		\begin{tabular}[t]{@{}l@{}}
			if $j_k \le j_-$ and the $k$-th subproblem
			\\
			was solved at the first attempt,
		\end{tabular}
		\\
		\max \set{\nu_- \, \alpha_k, \alpha_{\min}}
		&
		\text{if } j_k \ge j_+
		,
		\\
		\alpha_k
		&
		\text{otherwise}
		.
	\end{cases}
\end{equation}
The rule is taken from \cite[Sec.~3.3]{DokkenFarrellKeithPapadopoulosSurowiec:2025:2}, where $j_- = 4$, $j_+ = 10$, $\nu_+ = 2$ and $\nu_- = 1/2$.
We add the lower bound $\alpha_{\min}$ and the condition on the first attempt.
If Newton's method does not converge within a given number $j_{\max}$ of steps, the subproblem is solved again with $\max \set{\nu_{\textup{f}} \, \alpha_k, \alpha_{\min}}$ in place of $\alpha_k$, where $\nu_{\textup{f}} \in \interval(){0}{1}$.
The hypothesis of \cref{theorem:newton-local-convergence} holds for all sufficiently small $\alpha_k$, and this motivates the reduction.
While the subproblems are solved at the first attempt with at most $j_-$ Newton steps, the rule lets $\alpha_k$ grow geometrically.
The lower bound $\alpha_{\min}$ ensures $\sigma_k \ge k \, \alpha_{\min} \to \infty$, the hypothesis of \cref{theorem:lvpp-convergence}.
We impose no upper bound on $\alpha_k$, since the constants of \cref{corollary:newton-spectrum} do not depend on $\alpha_k$.

\begin{algorithm}[tbp]
	\caption{LVPP method for problem \eqref{eq:problem}}
	\label{algorithm:lvpp}
	\begin{algorithmic}[1]
		\Require initial latent variable $\latentvariable^0 \in L^\infty(\Omegactrl)$
		\Require first proximity parameter $\alpha_1 > 0$ and lower bound $\alpha_{\min} \in \interval(]{0}{\alpha_1}$ of the proximity parameters
		\Require thresholds $j_- < j_+$ and factors $\nu_+ > 1$ and $\nu_-, \nu_{\textup{f}} \in \interval(){0}{1}$ for the update of the proximity parameters
		\Require tolerance $\varepsilon_{\textup{N}} > 0$ and iteration limit $j_{\max}$ of Newton's method
		\Require forcing parameters $\eta_{\max} \in \interval(){0}{1}$ and $\gamma \in \interval(]{0}{1}$
		\Require bound $\kappa > 0$ for the increase of the latent variable in one Newton step
		\Require stopping criterion of the outer iteration
		\Ensure approximations $\control^k$ of the solution $\control^*$ of \eqref{eq:problem} and $\lambda^k$ of the multiplier $\lambda^*$
		\State Set $\control^0 \coloneqq \conjugatederivative(\latentvariable^0)$ and $\state^0 \coloneqq S \control^0$, and let $\adjointstate^0$ be the adjoint state of $\control^0$
		\For{$k = 1, 2, \dots$}
		\Comment{LVPP loop}
		\Repeat
		\State Set $(\current{\state}, \current{\control}, \current{\adjointstate}, \current{\latentvariable}) \coloneqq (\state^{k-1}, \control^{k-1}, \adjointstate^{k-1}, \latentvariable^{k-1})$ and $j \coloneqq 0$
		\label{line:warm-start}
		\While{$\norm{\current{\residualmap}}_* > \varepsilon_{\textup{N}}$ and $j < j_{\max}$}
		\label{line:newton-test}
		\Comment{Newton loop}
		\State Determine $(\increment{\state}, \increment{\control}, \increment{\adjointstate}, \increment{\latentvariable})$ from \eqref{eq:newton-system} by \minres until
		\label{line:minres}
		\begin{equation*}
			\norm{\current{\residualmap} + \newtonop \, (\increment{\state}, \increment{\control}, \increment{\adjointstate}, \increment{\latentvariable})}_*
			\le
			\min \set{\eta_{\max}, \norm{\current{\residualmap}}_*^\gamma} \, \norm{\current{\residualmap}}_*
		\end{equation*}
		\State Set $(\current{\state}, \current{\control}, \current{\adjointstate}, \current{\latentvariable}) \coloneqq (\current{\state}, \current{\control}, \current{\adjointstate}, \current{\latentvariable}) + \paren(){\kappa / \max \set{\kappa, \esssup \increment{\latentvariable}}} \, (\increment{\state}, \increment{\control}, \increment{\adjointstate}, \increment{\latentvariable})$
		\label{line:safeguard}
		\Comment{safeguard}
		\State Set $j \coloneqq j + 1$
		\EndWhile
		\If{Newton's method has not converged}
		\If{$\alpha_k = \alpha_{\min}$}
		\State Stop with a failure
		\label{line:failure}
		\EndIf
		\State Set $\alpha_k \coloneqq \max \set{\nu_{\textup{f}} \, \alpha_k, \alpha_{\min}}$
		\label{line:reduction}
		\EndIf
		\Until{Newton's method has converged}
		\State Set $(\state^k, \control^k, \adjointstate^k, \latentvariable^k) \coloneqq (\current{\state}, \current{\control}, \current{\adjointstate}, \current{\latentvariable})$, $j_k \coloneqq j$ and $\lambda^k \coloneqq \beta \control^k - \restrictc \, \adjointstate^k$
		\label{line:new-iterate}
		\If{the stopping criterion holds}
		\label{line:stopping}
		\State \Return $\control^k$ and $\lambda^k$
		\EndIf
		\State Set $\alpha_{k+1}$ by \eqref{eq:proximal-rule}
		\EndFor
	\end{algorithmic}
\end{algorithm}

For $(\state, \control, \adjointstate, \latentvariable) \in \primalspace \times \multiplierspace$ with $\exp(\latentvariable) \in \controlspace$, let $\residualmap(\state, \control, \adjointstate, \latentvariable) \in \dualspace{(\primalspace \times \multiplierspace)}$ be the residual of the scaled form \eqref{eq:optimality-form-scaled} of the $k$-th subproblem, that is, its left-hand side minus its right-hand side.
At the current Newton iterate, we write $\current{\residualmap} \coloneqq \residualmap(\current{\state}, \current{\control}, \current{\adjointstate}, \current{\latentvariable})$, whose components are $r_1$, $\Mctrl \, r_2$, $r_3$ and $\Mctrl \, r_4$ from \eqref{eq:newton-residuals}.
The Newton system \eqref{eq:newton-system} thus reads $\newtonop \, (\increment{\state}, \increment{\control}, \increment{\adjointstate}, \increment{\latentvariable}) = -\current{\residualmap}$, where $\newtonop$ is the Newton operator at the current iterate.
We write $\norm{b}_* \coloneqq \dual{b}{\precond^{-1} b}^{1/2}$ for $b \in \dualspace{(\primalspace \times \multiplierspace)}$, with the preconditioner \eqref{eq:preconditioner} at the current iterate.
This is the norm in which the minimal residual method of \cref{subsection:spectrum-minres} minimizes the residual norm.
These choices lead to \cref{algorithm:lvpp}.

Newton's method starts at the previous iterate.
At this start, the first, third and fourth components of $\current{\residualmap}$ vanish up to the accuracy of the solution of the previous LVPP subproblem, since $\latentvariable^{k-1}$ enters only the second row of \eqref{eq:optimality-form-scaled}.
The second component is $\Mctrl \, \lambda^{k-1}$ with $\lambda^{k-1} = \beta \control^{k-1} - \restrictc \, \adjointstate^{k-1}$, since $\current{\latentvariable} = \latentvariable^{k-1}$ in \eqref{eq:newton-residuals}.
For exactly solved subproblems, this is the start of \cref{theorem:newton-local-convergence}.
Newton's method stops when $\norm{\current{\residualmap}}_* \le \varepsilon_{\textup{N}}$ holds or after $j_{\max}$ steps (\cref{line:newton-test}). It has converged in the first case and has failed in the second.

\minres starts at the zero increment for each Newton step.
The residual of this initial guess is $-\current{\residualmap}$, so that \minres evaluates the norm $\norm{\current{\residualmap}}_*$ of the test in \cref{line:newton-test} in its first step.
The relative tolerance of \minres in \cref{line:minres} is the forcing term of an inexact Newton method.
It never exceeds $\eta_{\max} < 1$, which is the condition for the local convergence of inexact Newton methods (\cite[Thm.~2.3]{DemboEisenstatSteihaug:1982:1}).
It tends to zero with the residual, which preserves the superlinear local convergence of Newton's method (\cite[Cor.~3.5]{DemboEisenstatSteihaug:1982:1}).

\Cref{line:safeguard} shortens the Newton step if the step raises the latent variable somewhere by more than $\kappa$, so that the slack $D = \exp(\current{\latentvariable})$ grows at most by the factor $\e^\kappa$ in one step.
The same factor shortens all four components of the step, so that \cref{line:safeguard} becomes a damped Newton step.
This avoids floating-point overflow in the evaluation of $\exp(\latentvariable)$.
Decreases of the latent variable do not shorten the step, since the slack then decreases.
Under the hypothesis of \cref{theorem:newton-local-convergence}, the exact Newton steps change the latent variable by less than $2$ in $L^\infty(\Omegactrl)$, see Step~4 of its proof, so that the safeguard with $\kappa \ge 2$ does not shorten them.

After a failure of Newton's method, \cref{line:reduction} reduces $\alpha_k$.
Newton's method then restarts from the same iterate.
If it fails with $\alpha_k = \alpha_{\min}$, \cref{line:failure} stops the algorithm with a failure.
\Cref{line:new-iterate} computes the multiplier from the adjoint state of the last Newton iterate.
For an exact solution of the subproblem, it is the normalized increment \eqref{eq:multiplier-iterate}.

\Cref{algorithm:lvpp} leaves the stopping criterion open.
The a posteriori estimate of \cref{corollary:aposteriori-estimate} is the basis for a computable criterion.
For exact solutions of the subproblems, its right-hand side bounds the squared error of the control.
\Cref{section:numerics} specifies the stopping criterion and the parameters of our experiments.

\section{Two-Sided Control Constraints}
\label{section:two-sided-constraints}

In this section we extend the results of \cref{section:lvpp-iteration,section:newton-method,section:preconditioning} to two-sided control constraints $u_a \le \control \le u_b$.
We only state the results that change notably and we omit the proofs, which are similar to those of the one-sided constraint.

Let $u_b \in L^\infty(\Omegactrl)$ with $u_b > u_a$ \ale in $\Omegactrl$, and let
\begin{equation}
	\label{eq:admissible-set:two-sided}
	\admissibleset
	\coloneqq
	\setDef{\control \in \controlspace}{u_a \le \control \le u_b \text{ \ale in } \Omegactrl}
	.
\end{equation}
Problem \eqref{eq:problem} with this admissible set is of the form \eqref{eq:abstract-problem} with $C(x) = \interval[][]{u_a(x)}{u_b(x)}$.
Among the Legendre functions with $\interior \dom R(x, \cdot) = \interior C(x)$, we choose the generalized Fermi--Dirac entropy
\begin{equation}
	\label{eq:legendre:two-sided}
	R(x, t)
	\coloneqq
	\begin{cases}
		\paren[big](){t - u_a(x)} \ln \paren[big](){t - u_a(x)} + \paren[big](){u_b(x) - t} \ln \paren[big](){u_b(x) - t}
		,
		&
		t \in C(x)
		,
		\\
		+\infty
		,
		&
		t \notin C(x)
		,
	\end{cases}
\end{equation}
with $0 \ln 0 \coloneqq 0$, for which $\dom R(x, \cdot) = C(x)$.
Notice that, up to a term that does not depend on $t$, $R$ is obtained simply by adding the two one-sided Legendre functions of type \eqref{eq:legendre}.
The function $R$ is a normal convex integrand since $u_a$ and $u_b$ are measurable.
Moreover, $R'(x, t) = \ln \dfrac{t - u_a(x)}{u_b(x) - t}$ for $u_a(x) < t < u_b(x)$, and
\begin{equation}
	\label{eq:conjugate:two-sided}
	\begin{aligned}
		\conjugatemap{R}(x, s)
		&
		=
		u_a(x) \, s + \paren[big](){u_b(x) - u_a(x)} \ln \frac{1 + \exp(s)}{u_b(x) - u_a(x)}
		,
		\\
		\conjugatederivative(x, s)
		&
		=
		u_a(x) + \paren[big](){u_b(x) - u_a(x)} \, \varsigma(s)
		,
		\\
		\varsigma(s)
		&
		=
		\frac{\exp(s)}{1 + \exp(s)}
		,
	\end{aligned}
\end{equation}
where $\varsigma$ is the logistic function of \cref{remark:pdas}.
The control row of the LVPP system reads $\control^k = u_a + (u_b - u_a) \, \varsigma(\latentvariable^k)$.
The derivative $D = \conjugatesecondderivative(\current{\latentvariable})$ is
\begin{equation}
	\label{eq:slack:two-sided}
	D
	=
	(u_b - u_a) \, \varsigma(\current{\latentvariable}) \paren[big](){1 - \varsigma(\current{\latentvariable})}
	=
	\frac{\paren[big](){\conjugatederivative(\current{\latentvariable}) - u_a} \paren[big](){u_b - \conjugatederivative(\current{\latentvariable})}}{u_b - u_a}
	\le
	\frac{u_b - u_a}{4}
	,
\end{equation}
which no longer has a simple interpretation as a slack as was the case for \eqref{eq:slack}.
However, by the second expression, $1 / D$ is the sum of the reciprocals of the two slacks $\conjugatederivative(\current{\latentvariable}) - u_a$ and $u_b - \conjugatederivative(\current{\latentvariable})$.

As in \cref{lemma:kkt}, the following optimality system is standard.
\begin{lemma}[Optimality system, two-sided control constraints]
	\label{lemma:kkt:two-sided}
	Problem \eqref{eq:problem} with the admissible set \eqref{eq:admissible-set:two-sided} has a unique solution $\control^*$.
	Let $\lambda^* \coloneqq \lambda(\control^*) \in \controlspace$ with the map $\lambda$ from \eqref{eq:multiplier-map}, so that $J'(\control^*) = \Mctrl \, \lambda^*$.
	There exist unique $\lambda_a^*, \lambda_b^* \in \controlspace$ such that
	\begin{equation}
		\label{eq:kkt:two-sided}
		\begin{aligned}
			J'(\control^*)
			&
			=
			\Mctrl (\lambda_a^* - \lambda_b^*)
			\quad
			\text{in } \dualspace{\controlspace}
			,
			&
			\lambda_a^*, \lambda_b^*
			&
			\ge
			0
			,
			\\
			\lambda_a^* \, (\control^* - u_a)
			&
			=
			0
			,
			&
			\lambda_b^* \, (u_b - \control^*)
			&
			=
			0
			\quad
			\text{\ale in }
			\Omegactrl
			,
		\end{aligned}
	\end{equation}
	namely $\lambda_a^* = (\lambda^*)_+$ and $\lambda_b^* = (\lambda^*)_-$; see \cite[Thms.~2.16 and~2.29]{Troeltzsch:2010:1}.
\end{lemma}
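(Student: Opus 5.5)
The plan is to reduce to the general convex-analysis result for box constraints and then identify the multipliers by pointwise case distinction. \textbf{Existence and uniqueness.} The admissible set \eqref{eq:admissible-set:two-sided} is nonempty (it contains $u_a$, which lies in $\controlspace$ since $u_a \in L^\infty(\Omegactrl)$ and $\Omegactrl$ is bounded), convex and closed in $\controlspace$; closedness follows since $L^2$-convergent sequences have a.e.\ convergent subsequences. The objective $J$ is continuous and $\beta$-strongly convex on $\controlspace$. Hence it is coercive and weakly lower semicontinuous, and it has a unique minimizer $\control^*$ on the closed convex set.

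\textbf{Variational inequality.} Since $J$ is convex and Fréchet differentiable, $\control^*$ is characterized by $J'(\control^*)(\controltest - \control^*) \ge 0$ for all $\controltest \in \admissibleset$. By \eqref{eq:multiplier-map} this reads $\inner{\lambda^*}{\controltest - \control^*}_{\controlspace} \ge 0$ with $\lambda^* = \lambda(\control^*)$. The key step, and the only real work, is to localize this integral inequality to the pointwise statement
\[
\lambda^*(x)\,(t - \control^*(x)) \ge 0 \quad \text{for all } t \in [u_a(x), u_b(x)], \text{ for a.e. } x.
\]
I would do this by testing with $\controltest = \chi_E\, u_a + \chi_{\Omegactrl \setminus E}\, \control^*$ on $E \coloneqq \set{\lambda^* < 0}$, which yields $\int_E \lambda^* (u_a - \control^*) \, \d x \ge 0$ with a nonpositive integrand. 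Hence $\control^* = u_a$ a.e.\ on $\set{\lambda^* < 0}$. Testing analogously with $u_b$ on $\set{\lambda^* > 0}$ gives $\control^* = u_b$ there. Strictly speaking, I have the signs inverted here. The correct conclusion is that $\set{\lambda^*>0} \subseteq \set{\control^* = u_a}$ and $\set{\lambda^* < 0} \subseteq \set{\control^* = u_b}$, obtained by testing with $\controltest = u_a$ on $\set{\lambda^*>0}$ and $\controltest = u_b$ on $\set{\lambda^*<0}$. These test functions lie in $\admissibleset$ because $u_a \le u_b$.

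\textbf{Multipliers.} Set $\lambda_a^* = (\lambda^*)_+$ and $\lambda_b^* = (\lambda^*)_-$. Both are nonnegative and belong to $\controlspace$, and $\lambda_a^* - \lambda_b^* = \lambda^*$, so $J'(\control^*) = \Mctrl(\lambda_a^* - \lambda_b^*)$. The complementarity conditions follow from the inclusions above, since $\lambda_a^*$ vanishes off $\set{\control^* = u_a}$ and $\lambda_b^*$ vanishes off $\set{\control^* = u_b}$.

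\textbf{Uniqueness of the multipliers.} The first relation fixes $\lambda_a^* - \lambda_b^* = \lambda^*$ because $\Mctrl$ is injective. Since $u_b > u_a$ a.e., the sets $\set{\control^* = u_a}$ and $\set{\control^* = u_b}$ are a.e.\ disjoint. By complementarity, $\lambda_a^*\lambda_b^* = 0$ a.e. Two nonnegative functions with disjoint supports and prescribed difference $\lambda^*$ must be $(\lambda^*)_+$ and $(\lambda^*)_-$. This uniqueness argument is the only place where the strict inequality $u_b > u_a$ enters, and I expect it to be the main subtlety.
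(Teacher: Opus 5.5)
Your proof is correct. The paper gives no argument of its own and only cites Tr\"oltzsch (Thms.~2.16 and~2.29). Your route is the standard one behind those results: the variational inequality, localization with piecewise test functions, and the multipliers taken as $(\lambda^*)_\pm$. You also correctly trace the uniqueness of $\lambda_a^*, \lambda_b^*$ to the assumption $u_b > u_a$ a.e.

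In the final write-up, delete the first, sign-inverted localization attempt. On $\{\lambda^* < 0\}$ the integrand $\lambda^*(u_a - \control^*)$ is nonnegative, so that test yields nothing. Only your corrected tests, $u_a$ on $\{\lambda^* > 0\}$ and $u_b$ on $\{\lambda^* < 0\}$, give the inclusions you actually use.
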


The following proposition collects the changes to the results of \cref{section:lvpp-iteration}.
\begin{proposition}[Two-sided control constraints]
	\label{proposition:lvpp-iteration:two-sided}
	For the admissible set \eqref{eq:admissible-set:two-sided} and the Legendre function \eqref{eq:legendre:two-sided}, \cref{proposition:subproblem-wellposedness,proposition:latent-linfty-bounds}, \cref{lemma:optimality-forms}, \cref{theorem:lvpp-convergence} and \cref{corollary:aposteriori-estimate} hold with the following changes.
	Here $\control^*$ and $\lambda^*$ are those of \cref{lemma:kkt:two-sided}.
	\begin{enumeratelatin}
		\item
			\label{item:subproblem-wellposedness:two-sided}
			In \cref{proposition:subproblem-wellposedness}: $u_a < \control^k < u_b$ \ale in $\Omegactrl$, and $\latentvariable^k = \ln \dfrac{\control^k - u_a}{u_b - \control^k}$.

		\item
			\label{item:latent-linfty-bounds:two-sided}
			In \cref{proposition:latent-linfty-bounds}: \eqref{eq:slack-bounds} is replaced by $g_k - \alpha_k \beta \, (u_b - u_a) < \latentvariable^k < g_k$ \ale in $\Omegactrl$.

		\item
			\label{item:optimality-forms:two-sided}
			In \cref{lemma:optimality-forms}: the condition $\exp(\latentvariable) \in \controlspace$ is omitted.

		\item
			\label{item:lvpp-convergence:two-sided}
			In \cref{theorem:lvpp-convergence}: \cref{item:lvpp-convergence-kkt} is replaced by $(\lambda^k)_+ \to \lambda_a^*$ and $(\lambda^k)_- \to \lambda_b^*$ in $\controlspace$, $\inner{(\lambda^k)_+}{\control^k - u_a}_{\controlspace} \to 0$ and $\inner{(\lambda^k)_-}{u_b - \control^k}_{\controlspace} \to 0$.

		\item
			\label{item:aposteriori-estimate:two-sided}
			In \cref{corollary:aposteriori-estimate}: \eqref{eq:aposteriori-estimate} is replaced by
			\begin{equation}
				\label{eq:aposteriori-estimate:two-sided}
				\norm{\control^k - \control^*}_{\controlspace}^2
				\le
				\frac{1}{\beta} \paren[Big](){\inner{(\lambda^k)_+}{\control^k - u_a}_{\controlspace} + \inner{(\lambda^k)_-}{u_b - \control^k}_{\controlspace}}
				.
			\end{equation}
	\end{enumeratelatin}
\end{proposition}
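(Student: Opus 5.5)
The plan is to rerun the proofs of \cref{section:lvpp-iteration} item by item, with \eqref{eq:legendre:two-sided} in place of \eqref{eq:legendre}, and to isolate what changes in each.

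For \ref{item:subproblem-wellposedness:two-sided}, the existence part of \cref{proposition:subproblem-wellposedness} carries over unchanged. $R$ is a normal convex integrand with $R \ge -(u_b-u_a)\ln 2 + (u_b-u_a)\ln(u_b-u_a)$, which is bounded below since $u_a,u_b \in L^\infty$. It is finite at the midpoint, so the integral functional is proper, convex and lower semicontinuous on $\controlspace$, and \cite[Prop.~2.53]{BarbuPrecupanu:2012:1} applies. For the interior statement, I use that $R'(x,t) \to \mp\infty$ as $t \to u_a(x)$ or $t \to u_b(x)$. Hence $\partial R(x,\cdot)$ is empty at both endpoints and equals $\set{\ln\frac{t-u_a}{u_b-t}}$ inside. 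The subgradient characterization then gives $u_a<\control^k<u_b$ and the formula for $\latentvariable^k$. The uniqueness of the system solution follows verbatim.

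For \ref{item:latent-linfty-bounds:two-sided}, I write $\latentvariable^k = \latentvariable^{k-1} - \alpha_k\beta\control^k + \alpha_k\restrictc\adjointstate^k = g_k - \alpha_k\beta(\control^k-u_a)$. Here $g_k \in L^\infty$ by the Stampacchia estimate, exactly as before. Since $0<\control^k-u_a<u_b-u_a$, the two-sided bound follows at once, and no inversion of $\Phi_k$ is needed.

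For \ref{item:optimality-forms:two-sided}, $\conjugatederivative(\latentvariable)$ lies between $u_a$ and $u_b$ for every measurable $\latentvariable$. The extra integrability condition is therefore automatic, and the proof of \cref{lemma:optimality-forms} is unchanged.

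For the convergence theorem, item \ref{item:lvpp-convergence-rate} again rests on the Chen--Teboulle rate. I need finiteness of $\int D_R(\control^*,\control^1)$, which holds because $R$ is bounded on $C(x)$ uniformly in $x$ and $\latentvariable^1 \in \controlspace$. Strong convexity is used together with $J'(\control^*)(\control^k-\control^*) \ge 0$. This last inequality is now justified by the variational inequality of \cref{lemma:kkt:two-sided}: by \eqref{eq:kkt:two-sided} it equals $\inner{\lambda_a^*}{\control^k-u_a}+\inner{\lambda_b^*}{u_b-\control^k} \ge 0$. Items \ref{item:lvpp-convergence-multiplier} and \ref{item:lvpp-convergence-divergence} do not involve the constraint and stay unchanged. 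For \ref{item:lvpp-convergence:two-sided}, Lipschitz continuity of $t\mapsto t_\pm$ and $\lambda^k\to\lambda^*$ give $(\lambda^k)_\pm \to (\lambda^*)_\pm = \lambda_{a,b}^*$. The products then converge to $\inner{\lambda_a^*}{\control^*-u_a}=0$ and $\inner{\lambda_b^*}{u_b-\control^*}=0$.

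For \ref{item:aposteriori-estimate:two-sided}, I expand
\[
\beta\norm{\control^k-\control^*}^2 \le \inner{\lambda^k-\lambda^*}{\control^k-\control^*}
\]
with $\lambda^k=(\lambda^k)_+-(\lambda^k)_-$ and $\lambda^*=\lambda_a^*-\lambda_b^*$, writing $\control^k-\control^*=(\control^k-u_a)-(\control^*-u_a)=(u_b-\control^*)-(u_b-\control^k)$ as appropriate. The complementarity terms of $\control^*$ vanish. The cross terms $-\inner{(\lambda^k)_+}{\control^*-u_a}$, $-\inner{(\lambda^k)_-}{u_b-\control^*}$, $-\inner{\lambda_a^*}{\control^k-u_a}$ and $-\inner{\lambda_b^*}{u_b-\control^k}$ are nonpositive because $\control^k$ and $\control^*$ are feasible. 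What remains is the right-hand side of \eqref{eq:aposteriori-estimate:two-sided}.

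I expect this last expansion to be the main obstacle. The difficulty is only bookkeeping: choosing the right splitting of $\control^k-\control^*$ for each sign part so that every leftover term has a definite sign. It explains why the $\norm{(\lambda^k)_-}$ term of the one-sided estimate disappears, since it is replaced by feasibility of $\control^k$ with respect to $u_b$.
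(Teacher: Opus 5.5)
Your proposal is correct and follows the paper's own route: the paper omits the proofs and says only that those of \cref{section:lvpp-iteration} carry over, using the complementarity of both multipliers and the bounds $-1 \le R(x,t) \le (u_b(x)-u_a(x))^2$ on $C(x)$ in place of the one-sided ones, which is what your lower bound and uniform boundedness of $R$ provide. Your direct derivation of the latent bound from $0 < \control^k - u_a < u_b - u_a$ and your sign bookkeeping in the a posteriori estimate are exactly what those omitted proofs amount to.
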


The proofs are those of \cref{section:lvpp-iteration}, with the complementarity of both multipliers and the bounds $-1 \le R(x, t) \le \paren[big](){u_b(x) - u_a(x)}^2$ for $t \in C(x)$ in place of their one-sided counterparts.
By \cref{theorem:lvpp-convergence}~\ref{item:lvpp-convergence-divergence}, $\latentvariable^k / \sigma_k$ converges to $-\lambda^* = \lambda_b^* - \lambda_a^*$.
It is negative where the multiplier $\lambda_a^*$ of the lower bound is positive, and positive where the multiplier $\lambda_b^*$ of the upper bound is positive.
In \eqref{eq:aposteriori-estimate:two-sided}, each part of $\lambda^k$ is paired with the slack of its bound.
Therefore, unlike the one-sided estimate, it needs no bound on $\control^*$.
By \cref{proposition:lvpp-iteration:two-sided}~\ref{item:lvpp-convergence:two-sided}, its right-hand side tends to zero.

\Cref{section:newton-method,section:preconditioning} use the Legendre function only through the superposition operator $\conjugatederivative$ and its derivative $D$.
For two-sided control constraints, $\conjugatederivative$ maps every measurable function into $L^\infty(\Omegactrl)$, since $0 < \varsigma < 1$.
It is Lipschitz continuous on $\latentspace$, since $0 < \varsigma' \le 1/4$.
On $L^\infty(\Omegactrl)$, it is continuously Fréchet differentiable with the derivative $h \mapsto D \, h$ (\cite[Thm.~7]{GoldbergKampowskyTroeltzsch:1992:1}).

\begin{proposition}[Newton's method and the preconditioner, two-sided control constraints]
	\label{proposition:newton-preconditioner:two-sided}
	\Cref{lemma:newton-solvability}, \cref{proposition:newton-qp,proposition:uniform-solvability}, \cref{lemma:newton-two-field} and \cref{theorem:newton-local-convergence} hold for two-sided control constraints with $D$ from \eqref{eq:slack:two-sided} and with the same constants $\solvabilityconstant$, $\omega$ and $c_0$.
	\Cref{lemma:preconditioner-weights,lemma:state-estimates,lemma:coupling-estimate}, \cref{proposition:preconditioner-constants} and \cref{corollary:newton-spectrum,corollary:minres-convergence} hold with $D$ from \eqref{eq:slack:two-sided} in place of the slack \eqref{eq:slack}.
\end{proposition}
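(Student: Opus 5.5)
The plan is to go through the proofs of \cref{section:newton-method,section:preconditioning} and record, for each one, which properties of the superposition operator $\conjugatederivative$ and of its derivative $D$ it actually uses. Then I verify that these properties still hold for \eqref{eq:conjugate:two-sided} and \eqref{eq:slack:two-sided}. The structural ingredients carry over verbatim, because they only involve the linear operators and the sign of $D$. These are the block factorization and the Schur complement $\cS$ in \cref{lemma:newton-solvability}, the elimination leading to \eqref{eq:newton-qp}, the reduction to the observation domain in \cref{proposition:uniform-solvability}, and the equivalence of the two Newton iterations in \cref{lemma:newton-two-field}. The facts needed about $D$ are $D \in L^\infty(\Omegactrl)$ and $D > 0$ \ale, which give $0 \le W < 1/\beta$. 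For two-sided constraints these follow from $0 < \varsigma < 1$ and $\varsigma' \le 1/4$. Continuous Fréchet differentiability of $\conjugatederivative$ on $L^\infty(\Omegactrl)$ is quoted from \cite{GoldbergKampowskyTroeltzsch:1992:1}. In \cref{proposition:newton-qp}, the identity $\conjugatederivative(\current{\latentvariable}) = u_a + D$ was used only for interpretation, not in the proof, so the statement remains valid with $u_d$ as in \eqref{eq:newton-qp-target}.

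For \cref{theorem:newton-local-convergence}, the only place where the exponential enters quantitatively is the Lipschitz estimate in Step~2. This is the main obstacle. There we need $\abs{D_1 - D_2} \le \e^2 D_3 \abs{\latentvariable_1 - \latentvariable_2}$ whenever $\latentvariable_1, \latentvariable_2, \latentvariable_3$ lie within $1$ of $\latentvariable^{k-1}$. I would prove the analogous bound for $h(s) \coloneqq \varsigma(s)(1 - \varsigma(s))$, using $h' = h \, (1 - 2\varsigma)$, so that $\abs{h'} \le h$ and hence $\abs{(\ln h)'} \le 1$. Integrating gives $h(s) \le \e^{\abs{s - t}} h(t)$, and the same holds after multiplying by $u_b - u_a$. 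With $\abs{\latentvariable_i - \latentvariable_3} < 2$ this yields $\abs{D_1 - D_2} \le \max_{\xi} D(\xi) \, \abs{\latentvariable_1 - \latentvariable_2} \le \e^2 D_3 \abs{\latentvariable_1 - \latentvariable_2}$, where $\xi$ ranges between $\latentvariable_1$ and $\latentvariable_2$. This is exactly the inequality that was used, so $\omega$ and $c_0$ are unchanged. Step~1 additionally needs $\control^{k-1}, \latentvariable^{k-1}, \lambda^{k-1} \in L^\infty(\Omegactrl)$ and $\control^{k-1} = \conjugatederivative(\latentvariable^{k-1})$. These follow from \cref{proposition:lvpp-iteration:two-sided}~\ref{item:latent-linfty-bounds:two-sided} and~\ref{item:subproblem-wellposedness:two-sided}, and for $k = 1$ from the boundedness of $\conjugatederivative$. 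Steps~3 and~4 use only \cref{proposition:uniform-solvability} and $D > 0$.

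For \cref{section:preconditioning}, I check that \cref{lemma:preconditioner-weights,lemma:state-estimates,lemma:coupling-estimate} and \cref{proposition:preconditioner-constants} use only that $D$ is a positive bounded function and that $W = \alpha_k D / (1 + \alpha_k \beta D)$. All of those proofs are pointwise algebra in the scalars $\alpha_k D$, $\beta$, $W$ and $\controlweight$, together with the Poincaré inequality and $W \le \coefficientbound$. None of them uses the relation between $D$ and $\current{\latentvariable}$. Hence they hold verbatim with $D$ from \eqref{eq:slack:two-sided}. \Cref{corollary:newton-spectrum,corollary:minres-convergence} then follow as before from \cref{theorem:saddle-point-bounds}.
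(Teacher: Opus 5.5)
Your proposal is correct and follows the paper's own argument: the proofs of \cref{section:newton-method,section:preconditioning} use $D$ only through its positivity and boundedness, and through the bound $\abs{D_1 - D_2} \le \e^2 D_3 \abs{\latentvariable_1 - \latentvariable_2}$ in Step~2 of the proof of \cref{theorem:newton-local-convergence}. Your derivation of that bound from $\abs{(\ln h)'} = \abs{1 - 2 \varsigma} \le 1$ supplies the detail the paper only asserts; the one point to add is that Step~4 also identifies the limit as the unique zero of $F_k$, which follows from the two-sided version of \cref{proposition:subproblem-wellposedness} in \cref{proposition:lvpp-iteration:two-sided}.
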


The proofs carry over, since they use $D$ only through $D > 0$ and through the bound on $\abs{D_1 - D_2}$ in Step~2 of the proof of \cref{theorem:newton-local-convergence}, which the logistic function satisfies with the same constant.

\begin{remark}[Active set method with two active sets]
	\label{remark:pdas:two-sided}
	The comparison of \cref{remark:pdas} carries over with two active sets.
	The active set method for two-sided control constraints (\cite[Sec.~2]{ItoKunisch:2004:1}) fixes $\control = u_a$ on $\sA_a \coloneqq \set{\current{\lambda} + c \, (u_a - \current{\control}) > 0}$ and $\control = u_b$ on $\sA_b \coloneqq \set{\current{\lambda} + c \, (u_b - \current{\control}) < 0}$.
	Its step is \eqref{eq:newton-qp} with $W = \chi_{\sI} / \beta$ and $u_d = \chi_{\sA_a} u_a + \chi_{\sA_b} u_b$, where $\sI \coloneqq \Omegactrl \setminus (\sA_a \cup \sA_b)$.
	The Newton step of LVPP is \eqref{eq:newton-qp} with $\beta W = \varsigma\paren[big](){\ln(\alpha_k \beta D)}$ for $D$ from \eqref{eq:slack:two-sided} and with $u_d$ from \eqref{eq:newton-qp-target}.
	Under the conditions of \cref{remark:pdas}, the soft active set at the warm start sharpens to the two active sets $\set{\lambda^* > 0}$ and $\set{\lambda^* < 0}$ of the solution as the LVPP iteration progresses.
\end{remark}

\Cref{algorithm:lvpp} applies to two-sided control constraints with four changes.
The initial control $\control^0 = \conjugatederivative(\latentvariable^0)$ and the residual $\residualmap$ use $\conjugatederivative$ from \eqref{eq:conjugate:two-sided}.
The residual $\residualmap(\state, \control, \adjointstate, \latentvariable)$ is defined for every $(\state, \control, \adjointstate, \latentvariable) \in \primalspace \times \multiplierspace$, since $\conjugatederivative$ maps into $L^\infty(\Omegactrl)$.
The safeguard in \cref{line:safeguard} is omitted, together with its bound $\kappa$; it is not needed, since $D \le (u_b - u_a)/4$ holds by \eqref{eq:slack:two-sided}.
The computable stopping quantity is the right-hand side of \eqref{eq:aposteriori-estimate:two-sided}.
The other lines are unchanged.

\section{Numerical Experiments}
\label{section:numerics}

This section applies \cref{algorithm:lvpp} to one test problem on a sequence of uniformly refined meshes.
We report the numbers of outer steps, of Newton systems and of \minres iterations, as well as the errors of the control and of the multiplier.
The experiments examine how these numbers depend on the mesh and on the proximity parameters.
\Cref{table:experiment-parameters} lists the parameters of our experiments.

\begin{table}[htbp]
	\caption{Parameters of our experiments}
	\label{table:experiment-parameters}
	\centering
	\begin{tabular}{llc}
		\toprule
		parameter & symbol & value
		\\
		\midrule
		initial latent variable & $\latentvariable^0$ & $0$
		\\
		first proximity parameter & $\alpha_1$ & $5 \cdot 10^4$
		\\
		lower bound of the proximity parameters & $\alpha_{\min}$ & $1$
		\\
		thresholds for the update of the proximity parameters & $j_-$, $j_+$ & $5$, $11$
		\\
		factors for the update of the proximity parameters & $\nu_+$, $\nu_-$, $\nu_{\textup{f}}$ & $4$, $1/2$, $1/4$
		\\
		tolerance of Newton's method & $\varepsilon_{\textup{N}}$ & $10^{-10}$
		\\
		iteration limit of Newton's method & $j_{\max}$ & $50$
		\\
		forcing parameters & $\eta_{\max}$, $\gamma$ & $1/10$, $1/2$
		\\
		bound for the increase of the latent variable & $\kappa$ & $5$
		\\
		parameter of the preconditioner \eqref{eq:preconditioner-blocks} & $\coefficientbound$ & $1/\beta$
		\\
		\minres iteration limit & & $100$
		\\
		maximal number of outer steps & & $40$
		\\
		tolerance of the stopping criterion & & $10^{-6}$
		\\
		\bottomrule
	\end{tabular}
\end{table}

\subsection{Test Problem}
\label{subsection:test-problem}

Our test problem is an optimal control problem of the form \eqref{eq:problem}, \ie, with one-sided control constraints.
\begin{example}[Test problem]
	\label{example:test-problem}
	The domain is $\Omega = \interval(){0}{1}^2$.
	The observation domain is $\Omegaobs = \interval(){1/8}{1/4} \times \interval(){1/8}{7/8}$.
	The control domain is $\Omegactrl = \interval(){3/8}{7/8}^2$.
	The further data of problem \eqref{eq:problem} are $y_d = 1/10$ on $\Omegaobs$, $u_a = 2$ and $\beta = 10^{-4}$.
\end{example}

\Cref{figure:domains} shows the control and observation subdomains, which are disjoint.
The meshes divide $\Omega$ into $n \times n$ squares, each cut into two triangles by a diagonal, with $n \in \set{32, 64, 128, 256}$.
Both subdomains are resolved by these meshes.
As anticipated in \cref{remark:discretization}, we discretize the problem using piecewise linear finite element functions for the state and the adjoint state, and piecewise constant functions for the control, the latent variable and the control bound $u_a$.
A Newton system then has $2 (n - 1)^2 + n^2$ unknowns, from $\num{2946}$ for $n = 32$ to $\num{195586}$ for $n = 256$.
The state and the adjoint state have one unknown for each of the $(n - 1)^2$ interior nodes, and the control and the latent variable have one unknown for each of the $n^2 / 2$ triangles of $\Omegactrl$.

\begin{figure}[htbp]
	\centering
	\begin{tikzpicture}[scale=3.6]
		\filldraw[fill=TolVibrantBlue!20, draw=TolVibrantBlue, thick] (0.125, 0.125) rectangle (0.25, 0.875);
		\filldraw[fill=TolVibrantOrange!20, draw=TolVibrantOrange, thick] (0.375, 0.375) rectangle (0.875, 0.875);
		\draw[thick] (0, 0) rectangle (1, 1);
		\node[rotate=90] at (0.1875, 0.5) {$\Omegaobs$};
		\node at (0.625, 0.625) {$\Omegactrl$};
		\node at (0.625, 0.1875) {$\Omega$};
		\foreach \x/\l in {0.125/{\dfrac{1}{8}}, 0.25/{\dfrac{1}{4}}, 0.375/{\dfrac{3}{8}}, 0.875/{\dfrac{7}{8}}}
		\draw (\x, 0) -- (\x, -0.025) node[below, font=\scriptsize] {$\l$};
		\foreach \y/\l in {0.125/{\dfrac{1}{8}}, 0.375/{\dfrac{3}{8}}, 0.875/{\dfrac{7}{8}}}
		\draw (0, \y) -- (-0.025, \y) node[left, font=\scriptsize] {$\l$};
	\end{tikzpicture}
	\caption{Domains of the test problem}
	\label{figure:domains}
\end{figure}

We stop \cref{algorithm:lvpp} when $\norm{\control^k - \control_{\textup{ref}}}_{L^\infty(\Omegactrl)}$ reaches the tolerance  in the last line of \cref{table:experiment-parameters}.
Here $\control_{\textup{ref}}$ is a reference solution, computed on the same mesh by the primal-dual active set method (\cite{HintermuellerItoKunisch:2002:1}) using a direct solver for each linear system therein.
The criterion thus measures the error of the algorithm on each mesh, not the discretization error.
The multiplier $\lambda^k$ is compared with $\lambda_{\textup{ref}} \coloneqq \lambda(\control_{\textup{ref}})$ from \eqref{eq:multiplier-map}.
Both errors are measured in $L^\infty(\Omegactrl)$.

At the reference solutions of all four meshes, the bound is active on about \SI{50}{\percent} of the control domain.
\Cref{figure:final-iterate} shows the final iterate on the mesh with $n = 32$.
The control is largest at the edge of $\Omegactrl$ next to $\Omegaobs$.
The bound is active in the part of $\Omegactrl$ farther from $\Omegaobs$.
In this part, the control equals $u_a = 2$, and the multiplier $\lambda^k$ is positive.
There, the latent variable~$\latentvariable^k$ agrees with $-\sigma_k \lambda^k$ up to $\SI{2}{\percent}$, where $\sigma_k = \alpha_1 + \dots + \alpha_k$.
Since $\sigma_k = 4.4 \cdot 10^9$ and $\lambda^k$ reaches $1.7 \cdot 10^{-4}$, the latent variable reaches $-7.4 \cdot 10^5$ there.
\Cref{theorem:lvpp-convergence}~\ref{item:lvpp-convergence-divergence} predicts this growth.
Elsewhere, $\latentvariable^k$ lies between $-10$ and $2$.
On the color scale of the panel of the latent variable these values cannot be distinguished from zero.
The same holds for the part of the active set where the multiplier is small.
This panel therefore does not show the edge of the active set.

\begin{figure}[htbp]
	\centering
\pgfplotsset{
	field/.style={
		scale only axis, width=2.2cm, height=2.2cm,
		xmin=0, xmax=1, ymin=0, ymax=1,
		xtick={0,0.5,1}, ytick={0,0.5,1},
		colorbar, colorbar style={width=0.12cm},
	},
	nodal field/.style={
		patch, patch type=triangle, shader=interp,
		patch table={\figdata/solution-elements-n\figsolutionmesh.csv},
		point meta=\thisrow{#1},
	},
	cell field/.style={
		patch, patch type=triangle, shader=flat,
		patch table with point meta={\figdata/solution-cells-n\figsolutionmesh.csv},
		point meta=\thisrow{#1},
	},
}
\begin{tikzpicture}
	\begin{axis}[field, title={state $\state$}, colormap/viridis]
		\addplot[nodal field=y] table [x=x1, y=x2] {\figdata/solution-nodes-n\figsolutionmesh.csv};
		\figsubdomains
	\end{axis}
\end{tikzpicture}%
\hspace{0.3cm}%
\begin{tikzpicture}
	\begin{axis}[field, title={adjoint state $\adjointstate$}, colormap/hot2]
		\addplot[nodal field=p] table [x=x1, y=x2] {\figdata/solution-nodes-n\figsolutionmesh.csv};
		\figsubdomains
	\end{axis}
\end{tikzpicture}

\medskip

\begin{tikzpicture}
	\begin{axis}[field, title={control $\control$}, colormap/bluered]
		\addplot[cell field=u] table [x=x1, y=x2] {\figdata/solution-nodes-n\figsolutionmesh.csv};
	\end{axis}
\end{tikzpicture}%
\hspace{0.3cm}%
\begin{tikzpicture}
	\begin{axis}[field, title={multiplier $\lambda$}, colormap/redyellow]
		\addplot[cell field=multiplier] table [x=x1, y=x2] {\figdata/solution-nodes-n\figsolutionmesh.csv};
	\end{axis}
\end{tikzpicture}%
\hspace{0.3cm}%
\begin{tikzpicture}
	\begin{axis}[field, title={latent variable $\latentvariable$}, colormap/violet]
		\addplot[cell field=psi] table [x=x1, y=x2] {\figdata/solution-nodes-n\figsolutionmesh.csv};
	\end{axis}
\end{tikzpicture}
	\caption{%
		Final iterate on the coarsest mesh ($n = 32$).
		The white lines in the upper two panels outline $\Omegaobs$ and $\Omegactrl$.
	}
	\label{figure:final-iterate}
\end{figure}

\subsection{Implementation}
\label{subsection:implementation}

Our implementation of \cref{algorithm:lvpp} is written in \python~3.12.
\firedrake~2026.4.1 (\cite{HamKellyMitchellCotterKirbySagiyamaBouzianiVorderwuelbeckeGregoryBetteridgeShaperoNixonHillWardFarrellBrubeckMarsdenGibsonHomolyaSunMcRaeLuporiniGregoryLangeFunkeRathgeberBerceaMarkall:2023:1}) assembles the matrices of \cref{subsection:problem-setting}.
All further computations use these matrices.
The solves with the blocks $\Pobs$ and $\Qctrl$ of the preconditioner use sparse LU factorizations by SuperLU (\cite{Li:2005:1}).
SciPy~1.18 (\cite{VirtanenGommersOliphantHaberlandReddyCournapeauBurovskiPetersonWeckesserBrightVanDerWaltBrettWilsonMillmanMayorovNelsonJonesKernLarsonCareyPolatFengMooreVanderPlasLaxaldePerktoldCimrmanHenriksenQuinteroHarrisArchibaldRibeiroPedregosaVanMulbregt:2020:1}) provides the interface to SuperLU.
Since $\coefficientbound = 1/\beta = 10^4$ is fixed, $\Pobs$ is factorized once per run.
The block $\Qctrl$ depends on the iterate through $W$.
It is factorized at every Newton iterate.
\minres is implemented as in \cite{HerzogSoodhalter:2017:1,HerzogSoodhalter:2016:2}.
This allows us to monitor the preconditioner-induced norms of the four residual subvectors when solving each Newton system.

The primal-dual active set method solves its linear systems with SuperLU as well.
It stops when its active set does not change.

\subsection{Results}
\label{subsection:results}

\begin{figure}[htbp]
	\centering
\begin{tikzpicture}
	\begin{semilogyaxis}[
			scale only axis, width=0.62\textwidth, height=0.36\textwidth,
			xlabel={outer step $k$}, ylabel={error in $L^\infty(\Omegactrl)$},
			axis y line*=left,
			legend style={at={(0.5,-0.2)}, anchor=north, legend columns=-1, /tikz/every even column/.append style={column sep=0.8em}},
		]
		\expandafter\pgfplotsinvokeforeach\expandafter{\figmeshes}{
			\addplot[mesh#1, thick, mark size=1.3pt] table [x=k, y=error_u_inf] {\figdata/outer-n#1.csv};
			\addlegendentry{$n = #1$}
			\addplot[mesh#1, thick, dashed, mark size=1.3pt, forget plot] table [x=k, y=error_multiplier_inf] {\figdata/outer-n#1.csv};
		}
	\end{semilogyaxis}
	\begin{semilogyaxis}[
			scale only axis, width=0.62\textwidth, height=0.36\textwidth,
			axis y line*=right, axis x line=none, ylabel={$\alpha_k$},
			legend style={at={(0.5,-0.31)}, anchor=north, legend columns=-1, /tikz/every even column/.append style={column sep=0.8em}},
		]
		\addlegendimage{black, thick}
		\addlegendentry{$\norm{\control^k - \control_{\textup{ref}}}_{L^\infty(\Omegactrl)}$}
		\addlegendimage{black, thick, dashed}
		\addlegendentry{$\norm{\lambda^k - \lambda_{\textup{ref}}}_{L^\infty(\Omegactrl)}$}
		\addlegendimage{black, thick, dotted}
		\addlegendentry{$\alpha_k$}
		\expandafter\pgfplotsinvokeforeach\expandafter{\figmeshes}{
			\addplot[mesh#1, thick, dotted, mark size=1.3pt, forget plot] table [x=k, y=alpha] {\figdata/outer-n#1.csv};
		}
	\end{semilogyaxis}
\end{tikzpicture}
	\caption{Errors of the control and of the multiplier (left axis) and proximity parameters (right axis) against the outer LVPP iteration step, on different mesh levels.}
	\label{figure:outer-errors}
\end{figure}

\Cref{figure:outer-errors} shows the errors of the control and of the multiplier and the proximity parameters against the outer LVPP iteration step~$k$.
The first two subproblems need six to nine Newton systems, so that the rule \eqref{eq:proximal-rule} keeps $\alpha_1$ for three outer steps.
Afterwards, $\alpha_k$ grows by the factor $\nu_+$ in most steps.
The last proximity parameters lie between $2 \cdot 10^8$ and $3.3 \cdot 10^9$, depending on the discretization level.
All runs use the same proximity parameters in the first four outer steps.
The errors decrease in every outer step.
Until the last two or three steps, $\sigma_k \, \norm{\control^k - \control_{\textup{ref}}}_{L^\infty(\Omegactrl)}$ lies between $1.2 \cdot 10^4$ and $1.6 \cdot 10^4$ on all meshes.
The error of the control thus decays like $1/\sigma_k$.
This is faster than the worst-case rate of \cref{theorem:lvpp-convergence}~\ref{item:lvpp-convergence-rate}.
The error of the multiplier is $\beta$ times the error of the control, up to $\SI{2.2}{\percent}$.
The algorithm stops after $11$ to $14$ outer steps.

\begin{figure}[htbp]
	\centering
\newcommand{\fignewtonpanel}[3]{%
	\begin{tikzpicture}
		\begin{semilogyaxis}[
				title={$n = #1$}, scale only axis, width=0.3\textwidth, height=0.2\textwidth,
				xlabel={outer step $k$}, ylabel={#2},
				xmin=-0.3, xmax=15.3, xtick={0,2,...,14}, ymin=1e-17, ymax=1, ytick={1e-16,1e-12,1e-8,1e-4,1},
				axis y line*=left, colormap name=newtonsteps, point meta min=0, point meta max=7,
			]
			\addplot[
					scatter, only marks, mark=*, mark size=1pt, point meta=\thisrow{i},
					scatter/use mapped color={draw=mapped color, fill=mapped color},
					discard if not={accepted}{True},
				]
				table [x=k, y=residual_norm] {\figdata/newton-n#1.csv};
		\end{semilogyaxis}
		\begin{semilogyaxis}[
				scale only axis, width=0.3\textwidth, height=0.2\textwidth,
				xmin=-0.3, xmax=15.3, axis y line*=right, axis x line=none, ylabel={#3},
			]
			\addplot[black, thick, dotted, mark=*, mark size=0.6pt] table [x=k, y=alpha] {\figdata/outer-n#1.csv};
		\end{semilogyaxis}
	\end{tikzpicture}%
}
\fignewtonpanel{32}{$\norm{\current{\residualmap}}_*$}{}%
\hfill
\fignewtonpanel{64}{}{$\alpha_k$ (dotted)}

\medskip

\fignewtonpanel{128}{$\norm{\current{\residualmap}}_*$}{}%
\hfill
\fignewtonpanel{256}{}{$\alpha_k$ (dotted)}

\medskip

\begin{tikzpicture}
	\begin{axis}[
			hide axis, scale only axis, width=0.45\textwidth, height=0.1cm, xmin=0, xmax=1, ymin=0, ymax=1,
			colormap name=newtonsteps, point meta min=0, point meta max=7,
			colorbar horizontal,
			colorbar style={height=0.12cm, xtick={0,1,...,7}, xticklabels={0,1,2,3,4,5,6,$\ge 7$}, xlabel={Newton iterate}},
		]
		\addplot[draw=none] coordinates {(0,0)};
	\end{axis}
\end{tikzpicture}
	\caption{Residual norms of all Newton iterates against the outer step. The iterates of the $k$-th subproblem lie over $k$ and are colored by their number, from the warm start $0$ to the last iterate. The dotted line shows $\alpha_k$.}
	\label{figure:newton-residuals}
\end{figure}

\Cref{figure:newton-residuals} shows the residuals of all Newton iterates.
The warm start gives initial residuals between $3 \cdot 10^{-5}$ and $7 \cdot 10^{-3}$.
In every subproblem, $\alpha_k \, \norm{\lambda^{k-1}}_{L^\infty(\Omegactrl)}$ exceeds $8$, whereas $c_0 \le 1/(3 \, \e^2) < 0.05$ by \eqref{eq:newton-local-constants} and $\solvabilityconstant \ge 1$.
\Cref{theorem:newton-local-convergence} does not cover these subproblems, and Newton's method converges nonetheless.
A subproblem needs between $3$ and $16$ Newton systems.
A run needs $61$ to $80$ Newton systems in total.
The largest counts, $11$ to $16$, occur on the two finest meshes at $\alpha_k = 2 \cdot 10^5$ and $\alpha_k = 1.6 \cdot 10^6$.
After them, the rule \eqref{eq:proximal-rule} halves the proximity parameter.
Newton's method never fails, so that no subproblem is repeated.
Without the safeguard in \cref{line:safeguard}, Newton's method fails in one to three subproblems per run.
In $22$ of the $50$ subproblems encountered during the four runs (one on each mesh level), the final residual norm lies below $10^{-12}$, two orders of magnitude below $\varepsilon_{\textup{N}}$.
This reflects the superlinear convergence of the inexact Newton method (\cite[Thm.~3.3]{DemboEisenstatSteihaug:1982:1}).

\begin{table}[htbp]
	\caption{%
		Number of \minres iterations, averaged over all Newton systems with the same proximity parameter $\alpha_k$ and rounded to the nearest integer.
		An empty entry means that the run on this mesh did not use this value of $\alpha_k$.
	}
	\label{table:minres-iterations}
	\centering
\pgfplotstabletypeset[
	columns={alpha,n32,n64,n128,n256},
	columns/alpha/.style={column name={$\alpha_k$}, sci, precision=2, sci sep align},
	columns/n32/.style={column name={$n = 32$}, string type, column type=c},
	columns/n64/.style={column name={$n = 64$}, string type, column type=c},
	columns/n128/.style={column name={$n = 128$}, string type, column type=c},
	columns/n256/.style={column name={$n = 256$}, string type, column type=c},
	every head row/.style={before row=\toprule, after row=\midrule},
	every last row/.style={after row=\bottomrule},
]{\figdata/table-minres.csv}

\end{table}

\Cref{table:minres-iterations} shows the number of \minres iterations per Newton system, averaged over all Newton systems with the same proximity parameter.
The averages lie between $10$ and $18$.
They show no trend in $\alpha_k$ or in the size parameter~$n$ of the discretization.
The average over a whole run is $13$ or $14$.

\begin{figure}[htbp]
	\centering
\begin{tikzpicture}
	\begin{semilogyaxis}[
			scale only axis, width=0.5\textwidth, height=0.36\textwidth,
			xlabel={step $j$ of \minres}, ylabel={norm of the residual},
			enlarge x limits=false, legend pos=outer north east,
		]
		\addplot[figpartadjoint, thick] table [x=i, y=adjoint] {\figdata/minres-n\fighistorymesh.csv};
		\addlegendentry{$\norm{r_1}_{\Pobs^{-1}}$ (adjoint equation)}
		\addplot[figpartoptimality, thick] table [x=i, y=optimality] {\figdata/minres-n\fighistorymesh.csv};
		\addlegendentry{$\norm{\Mctrl \, r_2}_{(\controlweight \Mctrl)^{-1}}$ (optimality condition)}
		\addplot[figpartstate, thick] table [x=i, y=state] {\figdata/minres-n\fighistorymesh.csv};
		\addlegendentry{$\norm{r_3}_{\Qctrl^{-1}}$ (state equation)}
		\addplot[figpartidentification, thick] table [x=i, y=identification] {\figdata/minres-n\fighistorymesh.csv};
		\addlegendentry{$\norm{\Mctrl \, r_4}_{\Qlatent^{-1}}$ (identification)}
		\addplot[black, very thick] table [x=i, y=total] {\figdata/minres-n\fighistorymesh.csv};
		\addlegendentry{$\norm{r}_*$ (total)}
		\addplot[black, dashed] table [x=i, y=tolerance] {\figdata/minres-n\fighistorymesh.csv};
		\addlegendentry{tolerance}
	\end{semilogyaxis}
\end{tikzpicture}
	\caption{%
		Norms of the \minres residual and of its four parts in the last Newton system of the first outer step on the mesh with $n = 256$.
		The dashed line marks the tolerance of this solve.
	}
	\label{figure:minres-history}
\end{figure}

\Cref{figure:minres-history} follows one solve, namely the last Newton system of the first outer step on the mesh with $n = 256$.
Its forcing term is $2.8 \cdot 10^{-4}$.
\minres reaches this relative tolerance after $26$~steps.

In summary, we observe a performance of the preconditioned \minres iteration that is mesh independent and independent of the proximity parameter $\alpha_k$ and the Newton iteration, as predicted by our analysis.

\section{Conclusion and Outlook}

In this paper we analyzed the latent variable proximal point method (LVPP) for optimal control problems with control constraints.
While we are able to devise an efficient and robust preconditioner for the Newton systems, the number of Newton steps is still rather high (we observed up to~$80$), so that the method is not yet competitive with the primal-dual active set method for the test problem.
We believe, however, that further efficiency improvements are possible, for example by a strategy that allows inexactness in the solution of each LVPP subproblem, combined with a more sophisticated update of the proximity parameter.
On the other hand, the LVPP method is globally convergent, which is not the case for the primal-dual active set method for two-sided constraints.

Further potential of the LVPP method may be seen in the application to optimal control problems with state constraints.
As is well known, state constraints lead to Lagrange multipliers which are measures and to a different structure of the Newton systems.
Since the primal-dual active set method is not well defined in function space for state constraints (\cite{ItoKunisch:2003:1,PapadopoulosHintermueller:2026:1}), a function space analysis of LVPP is of particular interest there.
This is left to future work.

\section*{AI Statement}

This work was carried out in close collaboration with the AI coding assistant Claude Code by Anthropic, using the Opus~5.5 and Fable~5.1 models.
Claude Code helped summarize the relevant literature, explore and test ideas, draft and refine proofs, write and edit the manuscript, and implement, run and evaluate the numerical experiments.
This collaboration considerably accelerated the research and allowed us to conduct this study within approximately three weeks.
It also allowed us to examine more alternatives, and to check more claims numerically, than we otherwise could have.
All results, arguments and references in this paper have been checked by the authors, who take full responsibility for its content.


\appendix

\printbibliography

@BOOK{BarbuPrecupanu:2012:1,
  AUTHOR = {Barbu, Viorel and Precupanu, Teodor},
  PUBLISHER = {Springer, Dordrecht},
  DATE = {2012},
  DOI = {10.1007/978-94-007-2247-7},
  EDITION = {4},
  SERIES = {Springer Monographs in Mathematics},
  TITLE = {Convexity and Optimization in Banach Spaces},
}

@ARTICLE{BergouniouxItoKunisch:1999:1,
  AUTHOR = {Bergounioux, Maïtine and Ito, Kazufumi and Kunisch, Karl},
  DATE = {1999},
  DOI = {10.1137/s0363012997328609},
  JOURNALTITLE = {SIAM Journal on Control and Optimization},
  NUMBER = {4},
  PAGES = {1176--1194},
  TITLE = {Primal-dual strategy for constrained optimal control problems},
  VOLUME = {37},
}

@ARTICLE{ChenTeboulle:1993:1,
  AUTHOR = {Chen, Gong and Teboulle, Marc},
  PUBLISHER = {Society for Industrial \& Applied Mathematics (SIAM)},
  DATE = {1993-08},
  DOI = {10.1137/0803026},
  JOURNALTITLE = {SIAM Journal on Optimization},
  NUMBER = {3},
  PAGES = {538--543},
  TITLE = {Convergence analysis of a proximal-like minimization algorithm using Bregman functions},
  VOLUME = {3},
}

@ARTICLE{DemboEisenstatSteihaug:1982:1,
  AUTHOR = {Dembo, Ron S. and Eisenstat, Stanley C. and Steihaug, Trond},
  PUBLISHER = {Society for Industrial \& Applied Mathematics (SIAM)},
  DATE = {1982-04},
  DOI = {10.1137/0719025},
  JOURNALTITLE = {SIAM Journal on Numerical Analysis},
  NUMBER = {2},
  PAGES = {400--408},
  TITLE = {Inexact Newton methods},
  VOLUME = {19},
}

@BOOK{Deuflhard:2011:1,
  AUTHOR = {Deuflhard, Peter},
  PUBLISHER = {Springer Berlin Heidelberg},
  DATE = {2011},
  DOI = {10.1007/978-3-642-23899-4},
  JOURNALTITLE = {Springer Series in Computational Mathematics},
  SUBTITLE = {Affine Invariance and Adaptive Algorithms},
  TITLE = {Newton Methods for Nonlinear Problems},
}

@ARTICLE{DokkenFarrellKeithPapadopoulosSurowiec:2025:2,
  AUTHOR = {Dokken, Jørgen S. and Farrell, Patrick E. and Keith, Brendan and Papadopoulos, Ioannis P.A. and Surowiec, Thomas M.},
  PUBLISHER = {Elsevier BV},
  DATE = {2025-10},
  DOI = {10.1016/j.cma.2025.118181},
  EPRINT = {2503.05672},
  EPRINTTYPE = {arXiv},
  JOURNALTITLE = {Computer Methods in Applied Mechanics and Engineering},
  PAGES = {118181},
  TITLE = {The latent variable proximal point algorithm for variational problems with inequality constraints},
  VOLUME = {445},
}

@BOOK{ElmanSilvesterWathen:2014:1,
  AUTHOR = {Elman, Howard C. and Silvester, David J. and Wathen, Andy J.},
  PUBLISHER = {Oxford University Press},
  DATE = {2014},
  DOI = {10.1093/acprof:oso/9780199678792.001.0001},
  EDITION = {2},
  SERIES = {Numerical Mathematics and Scientific Computation},
  TITLE = {Finite Elements and Fast Iterative Solvers: with Applications in Incompressible Fluid Dynamics},
}

@BOOK{GilbargTrudinger:2001:1,
  AUTHOR = {Gilbarg, David and Trudinger, Neil S.},
  PUBLISHER = {Springer, Berlin},
  DATE = {2001},
  DOI = {10.1007/978-3-642-61798-0},
  NOTE = {Reprint of the 1998 edition},
  SERIES = {Classics in Mathematics},
  TITLE = {Elliptic Partial Differential Equations of Second Order},
  VOLUME = {224},
}

@ARTICLE{GoldbergKampowskyTroeltzsch:1992:1,
  AUTHOR = {Goldberg, H. and Kampowsky, W. and Tröltzsch, Fredi},
  DATE = {1992},
  DOI = {10.1002/mana.19921550110},
  JOURNALTITLE = {Mathematische Nachrichten},
  PAGES = {127--140},
  TITLE = {On Nemytskij operators in $L_p$-spaces of abstract functions},
  VOLUME = {155},
}

@ARTICLE{GuennelHerzogSachs:2014:1,
  AUTHOR = {Günnel, Andreas and Herzog, Roland and Sachs, Ekkehard},
  URL = {https://etna.mcs.kent.edu/volumes/2011-2020/vol41/},
  DATE = {2014},
  JOURNALTITLE = {Electronic Transactions on Numerical Analysis},
  PAGES = {13--20},
  SCOOPINCLUDE = {yes},
  TITLE = {A note on preconditioners and scalar products in Krylov subspace methods for self-adjoint problems in Hilbert space},
  VOLUME = {41},
}

@REPORT{HamKellyMitchellCotterKirbySagiyamaBouzianiVorderwuelbeckeGregoryBetteridgeShaperoNixonHillWardFarrellBrubeckMarsdenGibsonHomolyaSunMcRaeLuporiniGregoryLangeFunkeRathgeberBerceaMarkall:2023:1,
  AUTHOR = {Ham, David A. and Kelly, Paul H. J. and Mitchell, Lawrence and Cotter, Colin and Kirby, Rob C. and Sagiyama, Koki and Bouziani, Nacime and Vorderwuelbecke, Sophia and Gregory, Thomas and Betteridge, Jack and Shapero, Daniel R. and Nixon-Hill, Reuben and Ward, Connor and Farrell, Patrick E. and Brubeck, Pablo D. and Marsden, India and Gibson, Thomas H. and Homolya, Miklós and Sun, Tianjiao and McRae, Andrew T. T. and Luporini, Fabio and Gregory, Alastair and Lange, Michael and Funke, Simon W. and Rathgeber, Florian and Bercea, Gheorghe-Teodor and Markall, Graham R.},
  INSTITUTION = {Imperial College London},
  DATE = {2023},
  DOI = {10.25561/104839},
  TITLE = {Firedrake user manual},
}

@ARTICLE{HerzogSachs:2010:1,
  AUTHOR = {Herzog, Roland and Sachs, Ekkehard},
  DATE = {2010},
  DOI = {10.1137/090779127},
  JOURNALTITLE = {SIAM Journal on Matrix Analysis and Applications},
  NUMBER = {5},
  PAGES = {2291--2317},
  SCOOPINCLUDE = {yes},
  TITLE = {Preconditioned conjugate gradient method for optimal control problems with control and state constraints},
  VOLUME = {31},
}

@ARTICLE{HerzogSachs:2015:1,
  AUTHOR = {Herzog, Roland and Sachs, Ekkehard},
  DATE = {2015},
  DOI = {10.1137/140973050},
  JOURNALTITLE = {SIAM Journal on Numerical Analysis},
  NUMBER = {3},
  PAGES = {1304--1324},
  SCOOPINCLUDE = {yes},
  SCOOPPROJECTS = {preconditioned-sqp},
  TITLE = {Superlinear convergence of Krylov subspace methods for self-adjoint problems in Hilbert space},
  VOLUME = {53},
}

@SOFTWARE{HerzogSoodhalter:2016:2,
  AUTHOR = {Herzog, Roland and Soodhalter, Kirk},
  DATE = {2016},
  DOI = {10.5281/zenodo.47393},
  TITLE = {SUBMINRES. A modified implementation of MINRES to monitor residual subvector norms for block systems},
}

@ARTICLE{HerzogSoodhalter:2017:1,
  AUTHOR = {Herzog, Roland and Soodhalter, Kirk},
  DATE = {2017},
  DOI = {10.1137/16M1093021},
  EPRINT = {1603.04475},
  EPRINTTYPE = {arXiv},
  JOURNALTITLE = {SIAM Journal on Scientific Computing},
  NUMBER = {6},
  PAGES = {A2645--A2663},
  SCOOPINCLUDE = {yes},
  TITLE = {A modified implementation of MINRES to monitor residual subvector norms for block systems},
  VOLUME = {39},
}

@ARTICLE{HintermuellerItoKunisch:2002:1,
  AUTHOR = {Hintermüller, Michael and Ito, Kazufumi and Kunisch, Karl},
  DATE = {2002},
  DOI = {10.1137/s1052623401383558},
  JOURNALTITLE = {SIAM Journal on Optimization},
  NUMBER = {3},
  PAGES = {865--888},
  TITLE = {The primal-dual active set strategy as a semismooth Newton method},
  VOLUME = {13},
}

@ARTICLE{HintermuellerKunisch:2006:1,
  AUTHOR = {Hintermüller, Michael and Kunisch, Karl},
  DATE = {2006},
  DOI = {10.1137/040611598},
  JOURNALTITLE = {SIAM Journal on Optimization},
  NUMBER = {1},
  PAGES = {159--187},
  TITLE = {Path-following methods for a class of constrained minimization problems in function space},
  VOLUME = {17},
}

@ARTICLE{HintermuellerUlbrich:2004:1,
  AUTHOR = {Hintermüller, Michael and Ulbrich, Michael},
  DATE = {2004},
  DOI = {10.1007/s10107-004-0540-9},
  JOURNALTITLE = {Mathematical Programming. A Publication of the Mathematical Programming Society},
  NUMBER = {1, Ser. B},
  PAGES = {151--184},
  TITLE = {A mesh-independence result for semismooth Newton methods},
  VOLUME = {101},
}

@BOOK{HinzePinnauUlbrichUlbrich:2009:1,
  AUTHOR = {Hinze, Michael and Pinnau, R. and Ulbrich, Michael and Ulbrich, Stefan},
  LOCATION = {Berlin},
  PUBLISHER = {Springer},
  DATE = {2009},
  DOI = {10.1007/978-1-4020-8839-1},
  TITLE = {Optimization with PDE Constraints},
}

@ARTICLE{Hiptmair:2006:1,
  AUTHOR = {Hiptmair, R.},
  DATE = {2006},
  DOI = {10.1016/j.camwa.2006.10.008},
  JOURNALTITLE = {Computers \& Mathematics with Applications. An International Journal},
  NUMBER = {5},
  PAGES = {699--706},
  TITLE = {Operator preconditioning},
  VOLUME = {52},
}

@ARTICLE{ItoKunisch:2003:1,
  AUTHOR = {Ito, Kazufumi and Kunisch, Karl},
  DATE = {2003},
  DOI = {10.1016/S0167-6911(03)00156-7},
  JOURNALTITLE = {Systems and Control Letters},
  PAGES = {221--228},
  TITLE = {Semi-smooth Newton methods for state-constrained optimal control problems},
  VOLUME = {50},
}

@ARTICLE{ItoKunisch:2004:1,
  AUTHOR = {Ito, Kazufumi and Kunisch, Karl},
  DATE = {2004},
  DOI = {10.1137/s0363012902411015},
  JOURNALTITLE = {SIAM Journal on Control and Optimization},
  NUMBER = {1},
  PAGES = {357--376},
  TITLE = {The primal-dual active set method for nonlinear optimal control problems with bilateral constraints},
  VOLUME = {43},
}

@ARTICLE{KeithKimLazarovSurowiec:2025:1,
  AUTHOR = {Keith, Brendan and Kim, Dohyun and Lazarov, Boyan S. and Surowiec, Thomas M.},
  PUBLISHER = {Society for Industrial \& Applied Mathematics (SIAM)},
  DATE = {2025-05},
  DOI = {10.1137/24m1708863},
  JOURNALTITLE = {SIAM Journal on Optimization},
  NUMBER = {2},
  PAGES = {1134--1164},
  TITLE = {Analysis of the SiMPL method for density-based topology optimization},
  VOLUME = {35},
}

@ONLINE{KeithMasriZeinhofer:2025:1,
  AUTHOR = {Keith, Brendan and Masri, Rami and Zeinhofer, Marius},
  DATE = {2025},
  EPRINT = {2507.13516},
  EPRINTTYPE = {arXiv},
  TITLE = {A priori error analysis of the proximal Galerkin method},
}

@ARTICLE{KeithSurowiec:2024:1,
  AUTHOR = {Keith, Brendan and Surowiec, Thomas M.},
  PUBLISHER = {Springer Science and Business Media LLC},
  DATE = {2024-11},
  DOI = {10.1007/s10208-024-09681-8},
  EPRINT = {2307.12444},
  EPRINTTYPE = {arXiv},
  JOURNALTITLE = {Foundations of Computational Mathematics},
  NUMBER = {1},
  PAGES = {385--481},
  TITLE = {Proximal Galerkin: a structure-preserving finite element method for pointwise bound constraints},
  VOLUME = {26},
}

@BOOK{KinderlehrerStampacchia:2000:1,
  AUTHOR = {Kinderlehrer, David and Stampacchia, Guido},
  PUBLISHER = {Society for Industrial and Applied Mathematics},
  DATE = {2000-01},
  DOI = {10.1137/1.9780898719451},
  TITLE = {An Introduction to Variational Inequalities and Their Applications},
}

@THESIS{Kollmann:2013:1,
  AUTHOR = {Kollmann, Markus},
  INSTITUTION = {Johannes Kepler Universität Linz},
  URL = {https://www.dk-compmath.jku.at/publications/phd-thesis-pdfs/2013-05-15/view},
  DATE = {2013},
  TITLE = {Efficient Iterative Solvers for Saddle Point Systems arising in PDE-constrained Optimization Problems with Inequality Constraints},
  TYPE = {phdthesis},
}

@INCOLLECTION{KollmannZulehner:2013:1,
  AUTHOR = {Kollmann, Markus and Zulehner, Walter},
  PUBLISHER = {Springer},
  BOOKTITLE = {Numerical Mathematics and Advanced Applications, Proceedings of ENUMATH 2011, the 9th European Conference on Numerical Mathematics and Advanced Applications, Leicester, September 2011},
  DATE = {2013},
  DOI = {10.1007/978-3-642-33134-3_81},
  PAGES = {771--779},
  TITLE = {A robust preconditioner for distributed optimal control for Stokes flow with control constraints},
}

@ARTICLE{KunischRoesch:2002:1,
  AUTHOR = {Kunisch, Karl and Rösch, Arnd},
  DATE = {2002},
  DOI = {10.1137/s1052623499358008},
  JOURNALTITLE = {SIAM Journal on Optimization},
  NUMBER = {2},
  PAGES = {321--334},
  TITLE = {Primal-dual active set strategy for a general class of constrained optimal control problems},
  VOLUME = {13},
}

@ARTICLE{Li:2005:1,
  AUTHOR = {Li, Xiaoye S.},
  DATE = {2005},
  DOI = {10.1145/1089014.1089017},
  JOURNALTITLE = {Association for Computing Machinery. Transactions on Mathematical Software},
  NUMBER = {3},
  PAGES = {302--325},
  TITLE = {An overview of SuperLU: algorithms, implementation, and user interface},
  VOLUME = {31},
}

@ARTICLE{MardalWinther:2011:1,
  AUTHOR = {Mardal, Kent-Andre and Winther, Ragnar},
  DATE = {2011},
  DOI = {10.1002/nla.716},
  JOURNALTITLE = {Numerical Linear Algebra with Applications},
  NUMBER = {1},
  PAGES = {1--40},
  TITLE = {Preconditioning discretizations of systems of partial differential equations},
  VOLUME = {18},
}

@ARTICLE{PaigeSaunders:1975:1,
  AUTHOR = {Paige, C. and Saunders, M.},
  DATE = {1975},
  DOI = {10.1137/0712047},
  JOURNALTITLE = {SIAM Journal on Numerical Analysis},
  NUMBER = {4},
  PAGES = {617--629},
  TITLE = {Solution of sparse indefinite systems of linear equations},
  VOLUME = {12},
}

@ONLINE{PapadopoulosHintermueller:2026:1,
  AUTHOR = {Papadopoulos, Ioannis P. A. and Hintermüller, Michael},
  DATE = {2026},
  EPRINT = {2607.26622},
  EPRINTTYPE = {arXiv},
  TITLE = {Mesh-dependent iteration count growth in primal-dual active set strategies},
}

@ARTICLE{PearsonStollWathen:2014:1,
  AUTHOR = {Pearson, John W. and Stoll, Martin and Wathen, Andrew J.},
  DATE = {2014},
  DOI = {10.1002/nla.1863},
  JOURNALTITLE = {Numerical Linear Algebra with Applications},
  NUMBER = {1},
  PAGES = {81--97},
  TITLE = {Preconditioners for state-constrained optimal control problems with Moreau-Yosida penalty function},
  VOLUME = {21},
}

@ARTICLE{PorcelliSimonciniTani:2015:1,
  AUTHOR = {Porcelli, Margherita and Simoncini, Valeria and Tani, Mattia},
  PUBLISHER = {Society for Industrial \& Applied Mathematics (SIAM)},
  DATE = {2015-01},
  DOI = {10.1137/140975711},
  JOURNALTITLE = {SIAM Journal on Scientific Computing},
  NUMBER = {5},
  PAGES = {S472--S502},
  TITLE = {Preconditioning of active-set Newton methods for PDE-constrained optimal control problems},
  VOLUME = {37},
}

@ARTICLE{Rockafellar:1967:1,
  AUTHOR = {Rockafellar, R. T.},
  PUBLISHER = {Canadian Mathematical Society},
  DATE = {1967},
  DOI = {10.4153/cjm-1967-012-4},
  JOURNALTITLE = {Canadian Journal of Mathematics},
  PAGES = {200--205},
  TITLE = {Conjugates and Legendre transforms of convex functions},
  VOLUME = {19},
}

@ARTICLE{Rockafellar:1968:1,
  AUTHOR = {Rockafellar, Ralph},
  PUBLISHER = {Mathematical Sciences Publishers},
  DATE = {1968-03},
  DOI = {10.2140/pjm.1968.24.525},
  JOURNALTITLE = {Pacific Journal of Mathematics},
  NUMBER = {3},
  PAGES = {525--539},
  TITLE = {Integrals which are convex functionals},
  VOLUME = {24},
}

@BOOK{Rockafellar:1970:1,
  AUTHOR = {Rockafellar, Ralph Tyrrell},
  LOCATION = {Princeton, New Jersey},
  PUBLISHER = {Princeton University Press},
  URL = {https://www.jstor.org/stable/j.ctt14bs1ff},
  DATE = {1970},
  SERIES = {Princeton Mathematical Series},
  TITLE = {Convex Analysis},
  VOLUME = {28},
}

@ARTICLE{RustenWinther:1992:1,
  AUTHOR = {Rusten, T. and Winther, R.},
  DATE = {1992},
  DOI = {10.1137/0613054},
  JOURNALTITLE = {SIAM Journal on Matrix Analysis and Applications},
  NUMBER = {3},
  PAGES = {887--904},
  TITLE = {A preconditioned iterative method for saddlepoint problems},
  VOLUME = {13},
}

@ARTICLE{SchielaUlbrich:2014:1,
  AUTHOR = {Schiela, Anton and Ulbrich, Stefan},
  DATE = {2014},
  DOI = {10.1137/120877532},
  JOURNALTITLE = {SIAM Journal on Optimization},
  NUMBER = {1},
  PAGES = {435--466},
  TITLE = {Operator preconditioning for a class of inequality constrained optimal control problems},
  VOLUME = {24},
}

@ARTICLE{SchoeberlZulehner:2007:1,
  AUTHOR = {Schöberl, Joachim and Zulehner, Walter},
  DATE = {2007},
  DOI = {10.1137/060660977},
  JOURNALTITLE = {SIAM Journal on Matrix Analysis and Applications},
  NUMBER = {3},
  PAGES = {752--773},
  TITLE = {Symmetric indefinite preconditioners for saddle point problems with applications to PDE-constrained optimization},
  VOLUME = {29},
}

@ARTICLE{SilvesterWathen:1994:1,
  AUTHOR = {Silvester, Dadiv and Wathen, Andrew},
  DATE = {1994},
  JOURNALTITLE = {SIAM Journal on Numerical Analysis},
  NUMBER = {5},
  PAGES = {1352--1367},
  TITLE = {Fast iterative solution of stabilised Stokes systems. II. Using general block preconditioners},
  VOLUME = {31},
}

@ARTICLE{StollWathen:2012:1,
  AUTHOR = {Stoll, Martin and Wathen, Andy},
  DATE = {2012},
  DOI = {10.1002/nla.823},
  JOURNALTITLE = {Numerical Linear Algebra with Applications},
  NUMBER = {1},
  PAGES = {53--71},
  TITLE = {Preconditioning for partial differential equation constrained optimization with control constraints},
  VOLUME = {19},
}

@ARTICLE{TitleyPeloquinPestanaWathen:2013:1,
  AUTHOR = {Titley-Peloquin, David and Pestana, Jennifer and Wathen, Andrew J.},
  PUBLISHER = {Oxford University Press (OUP)},
  DATE = {2013-07},
  DOI = {10.1093/imanum/drt025},
  JOURNALTITLE = {IMA Journal of Numerical Analysis},
  NUMBER = {2},
  PAGES = {462--479},
  TITLE = {GMRES convergence bounds that depend on the right-hand-side vector},
  VOLUME = {34},
}

@BOOK{Troeltzsch:2010:1,
  AUTHOR = {Tröltzsch, Fredi},
  LOCATION = {Providence},
  PUBLISHER = {American Mathematical Society},
  DATE = {2010},
  DOI = {10.1090/gsm/112},
  SERIES = {Graduate Studies in Mathematics},
  TITLE = {Optimal Control of Partial Differential Equations},
  VOLUME = {112},
}

@BOOK{Ulbrich:2011:1,
  AUTHOR = {Ulbrich, Michael},
  PUBLISHER = {Society for Industrial and Applied Mathematics (SIAM), Philadelphia, PA; Mathematical Optimization Society, Philadelphia, PA},
  DATE = {2011},
  DOI = {10.1137/1.9781611970692},
  SERIES = {MOS-SIAM Series on Optimization},
  TITLE = {Semismooth Newton Methods for Variational Inequalities and Constrained Optimization Problems in Function Spaces},
  VOLUME = {11},
}

@ARTICLE{UlbrichUlbrich:2009:1,
  AUTHOR = {Ulbrich, Michael and Ulbrich, Stefan},
  DATE = {2009},
  DOI = {10.1007/s10107-007-0168-7},
  JOURNALTITLE = {Mathematical Programming},
  NUMBER = {1--2},
  PAGES = {435--485},
  TITLE = {Primal-dual interior point methods for PDE-constrained optimization},
  VOLUME = {117},
}

@ARTICLE{VirtanenGommersOliphantHaberlandReddyCournapeauBurovskiPetersonWeckesserBrightVanDerWaltBrettWilsonMillmanMayorovNelsonJonesKernLarsonCareyPolatFengMooreVanderPlasLaxaldePerktoldCimrmanHenriksenQuinteroHarrisArchibaldRibeiroPedregosaVanMulbregt:2020:1,
  AUTHOR = {Virtanen, Pauli and Gommers, Ralf and Oliphant, Travis E. and Haberland, Matt and Reddy, Tyler and Cournapeau, David and Burovski, Evgeni and Peterson, Pearu and Weckesser, Warren and Bright, Jonathan and van der Walt, Stéfan J. and Brett, Matthew and Wilson, Joshua and Millman, K. Jarrod and Mayorov, Nikolay and Nelson, Andrew R. J. and Jones, Eric and Kern, Robert and Larson, Eric and Carey, C. J. and Polat, İlhan and Feng, Yu and Moore, Eric W. and VanderPlas, Jake and Laxalde, Denis and Perktold, Josef and Cimrman, Robert and Henriksen, Ian and Quintero, E. A. and Harris, Charles R. and Archibald, Anne M. and Ribeiro, Antônio H. and Pedregosa, Fabian and van Mulbregt, Paul},
  PUBLISHER = {Springer Science and Business Media LLC},
  DATE = {2020-02},
  DOI = {10.1038/s41592-019-0686-2},
  JOURNALTITLE = {Nature Methods},
  NUMBER = {3},
  PAGES = {261--272},
  TITLE = {SciPy 1.0: fundamental algorithms for scientific computing in Python},
  VOLUME = {17},
}

@ARTICLE{WathenFischerSilvester:1995:1,
  AUTHOR = {Wathen, Andrew and Fischer, Bernd and Silvester, David},
  DATE = {1995},
  DOI = {10.1007/s002110050138},
  JOURNALTITLE = {Numerische Mathematik},
  NUMBER = {1},
  PAGES = {121--134},
  TITLE = {The convergence rate of the minimal residual method for the Stokes problem},
  VOLUME = {71},
}

@ARTICLE{WeiserGaenzlerSchiela:2008:1,
  AUTHOR = {Weiser, Martin and Gänzler, T. and Schiela, Anton},
  DATE = {2008},
  DOI = {10.1007/s10589-007-9088-y},
  JOURNALTITLE = {Computational Optimization and Applications},
  PAGES = {127--145},
  TITLE = {A control reduced primal interior point method for a class of control constrained optimal control problems},
  VOLUME = {41},
}

@ARTICLE{WeiserSchielaDeuflhard:2005:1,
  AUTHOR = {Weiser, Martin and Schiela, Anton and Deuflhard, Peter},
  PUBLISHER = {Society for Industrial \& Applied Mathematics (SIAM)},
  DATE = {2005-01},
  DOI = {10.1137/s0036142903434047},
  JOURNALTITLE = {SIAM Journal on Numerical Analysis},
  NUMBER = {5},
  PAGES = {1830--1845},
  TITLE = {Asymptotic mesh independence of Newton’s method revisited},
  VOLUME = {42},
}

@ARTICLE{Zulehner:2011:1,
  AUTHOR = {Zulehner, Walter},
  DATE = {2011},
  DOI = {10.1137/100814767},
  JOURNALTITLE = {SIAM Journal on Matrix Analysis and Applications},
  PAGES = {536--560},
  TITLE = {Nonstandard norms and robust estimates for saddle point problems},
  VOLUME = {32},
}

\end{document}